\documentclass[a4paper,10pt]{amsart}
\usepackage{cases}
\usepackage{graphicx}
\usepackage{amsmath}
\usepackage{amssymb}
\usepackage[all]{xypic}
\usepackage[all]{xy}
\usepackage{mathrsfs}
\usepackage{amsfonts}
\usepackage{amscd}
\usepackage{latexsym}
\usepackage{amssymb}
\usepackage{graphicx,picinpar,emlines2,epsf}

\begin{document}
\input xy
\xyoption{all}

\renewcommand{\mod}{\operatorname{mod}\nolimits}
\newcommand{\proj}{\operatorname{proj}\nolimits}
\newcommand{\rad}{\operatorname{rad}\nolimits}
\newcommand{\soc}{\operatorname{soc}\nolimits}
\newcommand{\ind}{\operatorname{ind}\nolimits}
\newcommand{\Top}{\operatorname{top}\nolimits}
\newcommand{\ann}{\operatorname{Ann}\nolimits}
\newcommand{\id}{\operatorname{id}\nolimits}
\newcommand{\Mod}{\operatorname{Mod}\nolimits}
\newcommand{\End}{\operatorname{End}\nolimits}
\newcommand{\Ob}{\operatorname{Ob}\nolimits}
\newcommand{\Ht}{\operatorname{Ht}\nolimits}
\newcommand{\cone}{\operatorname{cone}\nolimits}
\newcommand{\rep}{\operatorname{rep}\nolimits}
\newcommand{\Ext}{\operatorname{Ext}\nolimits}
\newcommand{\Hom}{\operatorname{Hom}\nolimits}
\newcommand{\Dom}{\operatorname{Dom}\nolimits}
\renewcommand{\Im}{\operatorname{Im}\nolimits}
\newcommand{\Ker}{\operatorname{Ker}\nolimits}
\newcommand{\Coker}{\operatorname{Coker}\nolimits}
\renewcommand{\dim}{\operatorname{dim}\nolimits}
\newcommand{\Iso}{\operatorname{Iso}\nolimits}
\newcommand{\Ab}{{\operatorname{Ab}\nolimits}}
\newcommand{\Coim}{{\operatorname{Coim}\nolimits}}
\newcommand{\pd}{\operatorname{pd}\nolimits}
\newcommand{\sdim}{\operatorname{sdim}\nolimits}
\newcommand{\add}{\operatorname{add}\nolimits}
\newcommand{\cc}{{\mathcal C}}
\newcommand{\co}{{\mathcal O}}
\newcommand{\cl}{{\mathcal L}}
\newcommand{\cs}{{\mathcal S}}
\newcommand{\crc}{{\mathscr C}}
\newcommand{\crd}{{\mathscr D}}
\newcommand{\crm}{{\mathscr M}}
\newcommand{\Br}{\operatorname{Br}\nolimits}
\newcommand{\coh}{\operatorname{coh}\nolimits}
\newcommand{\vect}{\operatorname{vect}\nolimits}
\newcommand{\cub}{\operatorname{cub}\nolimits}
\newcommand{\can}{\operatorname{can}\nolimits}
\newcommand{\rk}{\operatorname{rk}\nolimits}
\newcommand{\st}{[1]}
\newcommand{\X}{\mathbb{X}}
\newcommand{\N}{\mathbb{N}}
\newcommand{\ul}{\underline}
\newcommand{\vx}{\vec{x}}
\newcommand{\vc}{\vec{c}}

\newtheorem{theorem}{Theorem}[section]
\newtheorem{acknowledgement}[theorem]{Acknowledgement}
\newtheorem{algorithm}[theorem]{Algorithm}
\newtheorem{axiom}[theorem]{Axiom}
\newtheorem{case}[theorem]{Case}
\newtheorem{claim}[theorem]{Claim}
\newtheorem{conclusion}[theorem]{Conclusion}
\newtheorem{condition}[theorem]{Condition}
\newtheorem{conjecture}[theorem]{Conjecture}
\newtheorem{corollary}[theorem]{Corollary}
\newtheorem{criterion}[theorem]{Criterion}
\newtheorem{definition}[theorem]{Definition}
\newtheorem{example}[theorem]{Example}
\newtheorem{exercise}[theorem]{Exercise}
\newtheorem{lemma}[theorem]{Lemma}
\newtheorem{notation}[theorem]{Notation}
\newtheorem{proposition}[theorem]{Proposition}
\newtheorem{remark}[theorem]{Remark}
\newtheorem{solution}[theorem]{Solution}
\newtheorem{summary}[theorem]{Summary}
\newtheorem*{thma}{Theorem}
\newtheorem*{Riemann-Roch Formula}{Riemann-Roch Formula}
\newtheorem{question}[theorem]{Question}
\numberwithin{equation}{section}


\title[{Tilting bundles and the ``missing part"}]{Tilting bundles and the ``missing part" on the weighted projective line of type $(2,
2, n)$}

\author[Chen]{Jianmin Chen}
\address{School of Mathematical Sciences, Xiamen University, Xiamen 361005, P.R.China}
\email{chenjianmin@xmu.edu.cn}
\thanks{The first author was supported by the National Natural
  Science Foundation of China (Grant No. 11201386) and the Natural Science Foundation of Fujian Province of China (Grant No.
  2012J05009); the second author was supported by the National Natural
  Science Foundation of China (Grant No. 10931006 and No. 11071040); the corresponding author was supported by the National Natural
  Science Foundation of China (Grant No. 11201388)}

\author[Lin]{Yanan Lin}
\address{School of Mathematical Sciences, Xiamen University, Xiamen 361005, P.R.China}
\email{ynlin@xmu.edu.cn}

\author[Ruan]{Shiquan Ruan$^\dag$}
\address{School of Mathematical Sciences, Xiamen University, Xiamen 361005, P.R.China}
\thanks{$^\dag$ Corresponding author}
\email{shiquanruan@stu.xmu.edu.cn }

\subjclass[2000]{14F05; 16G70; 18A32; 18E10; 18E30; 18E40.}
\keywords{Tilting bundle, missing part, weighted projective line,
factor category}

\begin{abstract}
This paper classifies all the tilting bundles in the category of
coherent sheaves on the weighted projective line of weight type $(2,
2, n)$, and investigates the abelianness of the ``missing part" from
the category of coherent sheaves to the category of finitely
generated right modules on the associated tilted algebra for each
tilting bundle.
\end{abstract}

\maketitle
\section{Introduction}

Tilting theory arises from the representation theory of finite
dimensional algebras and has proved to be a universal method for
constructing equivalences between categories, (see for instance
\cite{BB, HR}). Let $A$ be a finite dimensional algebra and $_{A}T$
be a tilting left $A$-module. Happel and Ringel \cite{HR} show that
$_{A}T$ gives rise to a torsion pair $(\mathcal {T}(_{A}T); \mathcal
{F}(_{A}T))$ in the category $\mod A$ of finitely generated left
$A$-modules and a corresponding torsion pair $(\mathcal {T}(T_{B});
\mathcal {F}(T_{B}))$ in the category of $\mod B^{op}$ of finitely
generated right $B$-modules, where $B=\mbox{End}_{A}(T)$ is the
endomorphism algebra of $_{A}T$. By Brenner-Butler Theorem
\cite{BB}, the functor $\Hom_{A}(T, -)$ induces an equivalence
between the categories $\mathcal {T}(_{A}T)$ and $\mathcal
{T}(T_{B})$, and the functor $\Ext^{1}_{A}(T, -)$ induces an
equivalence between the category $\mathcal {F}(_{A}T)$ and $\mathcal
{F}(T_{B})$. Moreover, if $A$ is hereditary, the torsion pair
$(\mathcal {T}(T_{B}); \mathcal {F}(T_{B}))$ is splitting, i.e. any
indecomposable right $B$-module is either in $\mathcal {T}(T_{B})$
or in $\mathcal {F}(T_{B})$.

Geigle and Lenzing \cite{GL} extended the notion of tilting module
to tilting sheaf in the category $\coh\X$ of coherent sheaves on the
weighted projective line $\mathbb{X}$. They showed that
$T_{\can}=\bigoplus\limits_{0\leq \vx\leq \vc}\co(\vx)$ is a
(canonical) tilting sheaf in $\coh\X$, with endomorphism algebra
$\Lambda=\End_{\coh\X}(T_{\can})$ the canonical algebra of type $(2,
2, n)$. The tilting sheaf $T_{\can}$ gives rise to torsion pairs
$(\mathcal {X}_{0}, \mathcal {X}_{1}) $ in $\coh\X$ and $(\mathcal
{Y}_{1}, \mathcal {Y}_{0} )$ in $\mbox{mod}\Lambda^{op}$ by setting
$\mathcal {X}_{i}\subseteq\coh\X$ the full subcategory of all $X$
with $\Ext^{j}_{\coh\X}(T, X)=0$ for all $j\neq i$, and $\mathcal
{Y}_{i}\subseteq \mbox{mod}\Lambda^{op}$ the full subcategory of all
$Y$ with $\text{Tor}_{j}^{\Lambda}(Y, T)=0$ for all $j\neq i$.
Analogous to the main theorem of tilting theory in the module
theoretic setting due to \cite{BB} and \cite{HR}, they proved that
the functors $\Ext^{i}_{\coh\X}(T, -): \mathcal{X}_{i} \to
\mathcal{Y}_{i}$ and $\mbox{Tor}_{i}^{\Lambda}(-, T): \mathcal
{Y}_{i} \to \mathcal {X}_{i}$ define equivalences of categories,
inverse to each other, for $ i=0,1$. Moreover, the torsion pair
$(\mathcal {Y}_{1}, \mathcal {Y}_{0} )$ is splitting in $\mbox{mod}
\Lambda^{op}$.

Combining the results from \cite{GL} with further work related to
tilting theory in hereditary categories, Lenzing \cite[Theorem
3.1]{L} established the tilting theorem in a hereditary abelian
Hom-finite category $\mathscr{H}$ with Serre duality and tilting
objects. He pointed out that if the endomorphism algebra $\Lambda$
of a tilting object $T$ in $\mathscr{H}$ is not hereditary, there
must loose some objects from $\mathscr{H}$ to $\mod\Lambda^{op}$.
Notice that the category $\coh\X$ satisfies the conditions of
\cite[Theorem 3.1]{L}. It is interesting to study the tilting sheaf
in $\coh\X$ and the structure of the corresponding factor category
$\crc=\coh\X/[\mathcal {X}_{0} \cup \mathcal {X}_{1} ]$, here,
$[\mathcal {X}_{0} \cup \mathcal {X}_{1} ]$ denotes the ideal of all
morphisms in $\coh\X$ which factor through a finite direct sum of
sheaves from $\mathcal {X}_{0} \cup \mathcal {X}_{1}$.

The factor category $\crc$ is called the ``missing part" from
$\coh\X$ to $\mbox{mod}\Lambda^{op}$ in \cite{CLR}. It is only an
additive category in general. Chen, Lin and Ruan \cite{CLR} focused
on the weighted projective line of weight type $(2, 2, n)$, and
showed that for the canonical tilting sheaf $T_{\can}$, the
corresponding ``missing part" $\crc_{\can}$ is an abelian category
and isomorphic to $\mod(k \overrightarrow{A}_{n-1})$. Moreover, some
examples there indicated that the abelianness is not true if the
tilting sheaf contains a direct summand of finite length sheaf. In
this paper, we extend the result to a more general case. Namely, we
investigate the tilting sheaves not containing finite length direct
summands--called tilting bundles--in $\coh\X$ and discuss the
abelianness of the ``missing part" $\crc$ from $\coh\X$ to
$\mbox{mod}\Lambda^{op}$, where the endomorphism algebra $\Lambda$
of tilting bundle is called tilted algebra in \cite{L}. We classify
all the tilting bundles into two types, consisting of line bundles
and not all consisting of line bundles. For the former case, we show
that each tilting bundle is the canonical one (under grading shift),
hence $\crc$ is abelian; for the latter case, we give the trichotomy
of the form of each tilting bundle, and show that $\crc$ is a
product of two abelian categories.

The paper is organized as follows. In section 2, we recall the
definition of weighted projective line $\X$ of type $(2,2,n)$ and
some well-known results on the category $\coh\X$ of coherent sheaves
on $\X$. In section 3, we classify all the tilting bundles in
$\coh\X$. More precisely, if the tilting bundle $T$ is consisting of
line bundles, then $T$ can be obtained from the canonical one under
grading shift; if else, $T$ can be decomposed into three parts,
$T=T^+(E_i)\oplus(\bigoplus\limits_{i\leq k \leq j}E_k)\oplus
T^-(E_j)$, see for instance (\ref{form of tilting bundles}). In
section 4, we show that the ``missing part" corresponding to $T$ of
the form (\ref{form of tilting bundles}) can be decomposed into a
product of two abelian categories.

Throughout the paper, let $k$ be an algebraic closed field and $\X$
be a weighted projective line of weight type $(2,2,n)$ with $n
\geqslant 2$. For simplification, we denote $\Ext^{i}_{\coh\X}(-,
-)$ by $\Ext^{i}(-, -)$ for $i\geq 0$.

\section{Preliminary}

The notion of weighted projective line was introduced by Geigle and
Lenzing \cite{GL} to give a geometric treatment to canonical algebra
which was studied by Ringel \cite{R}. In this section, we introduce
basic definitions and properties on the category of coherent sheaves
on the weighted projective line of weight type $(2,2,n)$.

\subsection{Weighted projective line}

Let $p=(p_{1},p_{2},p_{3})=(2,2,n)$ and $\mathbb{L}$ be the rank one
abelian group on generators $\vx_1, \vx_2, \vx_3$ with relations
$$2\vx_1=2\vx_2=n\vx_3=:\vc.$$ Then $\mathbb{L}$ is an ordered group,
and each $\vx\in \mathbb{L}$ can be uniquely written in normal form
\begin{equation}\label{normal form of x}
\vx=\sum\limits_{i=1}^{3}l_{i}\vx_{i}+l\vc, \mbox{\quad where\quad}
0\leq l_{i}\leq p_{i}-1 \mbox{\quad and\quad} l\in \mathbb{Z}.
\end{equation}
In addition, if $\vx$ is in normal form (\ref{normal form of x}),
one can define
$$\vx\geq 0\mbox{\quad if\ and\ only\ if\quad} l\geq 0,$$ then each
$\vx\in \mathbb{L}$ satisfies exactly one of the following two
possibilities
\begin{equation}\label{two possibilities of x}
\vx\geq 0 \text{\quad or\quad}  \vx\leq \vec{\omega}+\vc,
\end{equation}
where $\vc$ is called the canonical element and
$\vec{\omega}=\vc-\sum\limits_{i=1}^{3}\vx_{i}$ is called the
dualizing element of $\mathbb{L}$.

Denote by $S$ the commutative algebra
$$S=k[X_{1},X_{2},X_{3}]/\langle X^{n}_{3}-X^{2}_{2}+
X^{2}_{1}\rangle\triangleq k[x_{1},x_{2}, x_{3}].$$ Then $S$ carries
$\mathbb{L}$-graded by setting $\deg x_{i}=\vx_i \ (i=1,2,3)$, i.e.
$S$ has a decomposition $S=\bigoplus\limits_{\vx\in
\mathbb{L}}S_{\vx}$, which satisfies $S_{\vx}S_{\vec{y}}\subseteq
S_{\vx+\vec{y}}$ and $S_{0}=k$. Let $\mathbb{X}$ be the curve
corresponding to $S$ and we call it the weighted projective line of
weight type $(2,2,n)$.

\subsection{Coherent sheaves on the weighted projective line $\X$}
In the sense of Serre-Grothendieck-Gabriel \cite{G}, the category of
coherent sheaves on the weighted projective line $\mathbb{X}$ is
defined as the quotient category
$$\coh\X=\mod^{\mathbb{L}}S/\mod_{0}^{\mathbb{L}}S,$$
where $\mod^{\mathbb{L}}S$ is the category of finitely generated
$\mathbb{L}$-graded $S$-modules and $\mbox{mod}_{0}^{\mathbb{L}}S$
the full subcategory of $\mbox{mod}^{\mathbb{L}}S$ consisting of
finite dimensional modules. Use the notation
$\widetilde{M}\in\coh\X$ for $M\in \mod^{\mathbb{L}}S$,
and call the process sheafification:
$$q:
\mod^{\mathbb{L}}S\rightarrow \coh\X;\quad M\mapsto \widetilde{M}.$$
It is easy to see that $\widetilde{M}(\vx)=\widetilde{M(\vx)}$. Call
$\mathcal{O}=\widetilde{S}$ the structure sheaf of $\mathbb{X}$, and
$\mathcal{O}(\vx)$ a line bundle for $\vx\in \mathbb{L}$. Then by
definition, for each $\vx,\vec{y}\in \mathbb{L}$, we have
\begin{equation}\label{homomorphism between line bundles}
\Hom(\co(\vx),\co(\vec{y}))=S_{\vec{y}-\vx}.
\end{equation}
In addition, the category $\coh\X$ has a decomposition:
$$\coh\X=\vect\X \bigvee
\coh_{0}\X,$$ where the full subcategory $\vect\X$ (resp.
$\coh_{0}\X$) consists of coherent sheaves not having a simple
sub-sheaf
(resp. of finite length), $\bigvee$ means each indecomposable sheaf
is either in $\vect\X$ or in $\coh_{0}\X$, and there are no non-zero
morphisms from $\coh_{0}\X$ to $\vect\X$. The objects in $\vect\X$
are called vector bundles. Moreover, all line bundles belong to
$\vect\X$, and each vector bundle $X$ has a filtration by line
bundles
$$0=X_{0}\subset X_{1}\subset X_{2}\subset\cdots\subset X_{r}=X,$$
where each factor $L_{i}=X_{i}/X_{i-1}$ is a line bundle.

Let $\Delta=[2,2,n]$ be the Dynkin diagram attached to $\X$ and
$\widetilde{\Delta}$ its extended Dynkin diagram. According to
\cite{KLM}, the Auslander-Reiten quiver $\Gamma(\vect\X)$ of
$\vect\X$ consists of a single standard component of the form
$\mathbb{Z}\widetilde{\Delta}$. Moreover, the category
$\ind(\Gamma(\vect\X))$ of indecomposable vector bundles on $\X$ is
equivalent to the mesh category of $\Gamma(\vect\X)$.

In \cite{GL}, Geigle and Lenzing showed that the category $\coh\X$
is a hereditary, abelian, $k$-linear, Hom-finite, Noetherian
category with Serre duality, i.e.
\begin{equation}\label{Serre duality}
\Ext^1(X,Y)= D\Hom(Y, \tau X),
\end{equation} where the
$k$-equivalence $\tau: \coh\X\to\coh\X$ is given by the shift
$X\mapsto X(\vec{\omega})$.

\subsection{Tilting sheaf and Grothendieck group}
Recall from \cite{GL} that a coherent sheaf $T$ is called tilting in
$\coh\X$ if the following properties hold:
\begin{itemize}
\item[(1)]$T$ is extension-free, that is, $\Ext^1(T, T)=0$;
\item[(2)]$T$ generates $D^b(\coh\X)$ as a triangulated category,
i.e. $D^b(\coh\X)$ is the smallest triangulated subcategory of
$D^b(\coh\X)$ containing $T$;
\item[(3)]$\text{gl.dim}(\End(T))<\infty.$
\end{itemize}
Geigle and Lenzing \cite{GL} showed that condition (3) is a
consequence of (1) and (2), and to prove condition (2) it is
sufficient to show that $\coh\X$ is the smallest abelian subcategory
of $\coh\X$ containing $T$ since $\coh\X$ is hereditary. Moreover,
they gave a canonical tilting sheaf $T_{\can} = \bigoplus \limits_{0
\leqslant \vx \leqslant \vc  } \mathcal {O}(\vx)$ in $\coh\X$ whose
endomorphism algebra $\Lambda=\End(T_{\can})$ is the canonical
algebra of type $(2,2,n)$.

Let $K_{0}(\X)$ be the Grothendieck group of $\coh\X$ and we still
write $X\in K_{0}(\X) $ for the class $X\in \coh\X$. Then the
classes $\mathcal{O}(\vx)$ for $0\leq \vx\leq \vc$ form a
$\mathbb{Z}$-basis of $K_{0}(\X)$. There are two additive functions
on $K_{0}(\X)$ called rank and degree respectively. The rank
function $\mbox{rk}: K_{0}(\X)\to \mathbb{Z}$ is characterized by
$$\rk(\co(\vx))=1 \text{\quad for\quad} \vx\in \mathbb{L},$$ and the
degree function $\deg: K_{0}(\X)\to\mathbb{Z}$ is uniquely
determined by setting
$$\deg(\co(\vx))=\delta(\vx)\text{\quad for\quad} \vx\in \mathbb{L},$$ where $\delta:
\mathbb{L}\to \mathbb{Z}$ is the group homomorphism defined on
generators by
$$\delta(\vx_{1})=\delta(\vx_{2})=\frac{\text{l.c.m.}(2,n)}{2} \text{\quad and\quad}
\delta(\vx_{3})=\frac{\text{l.c.m.}(2,n)}{n} .$$ For each $X\in
\coh\X$, define the slope of $X$ as $$\mu X=\frac{\deg X}{\rk X}.$$
It is an element in $\mathbb{Q} \cup \{\infty\}$. According to
\cite{GL}, each vector bundle has positive rank, then the slope
belongs to $\mathbb{Q}$; each object in $\coh_{0}\X$ has rank $0$,
then the slope is $\infty$.

In \cite{L}, Lenzing proved that each indecomposable vector bundle
$X$ is exceptional in $\coh\X$, that is, $X$ is extension-free and
$\End(X)=k$. Moreover, for any two indecomposable vector bundles $X$
and $Y$, $\mbox{Hom}(X,X')\neq 0$ implies $\mu X\leq \mu X'$.

\section{Classification of tilting bundles}
In this section, we investigate the tilting bundle in $\coh\X$. We
make discussion in two cases according to whether it is consisting
of line bundles or not. Finally we give a classification of all the
tilting bundles.

\subsection{Tilting bundle consisting of line bundles}
In \cite{GL}, we know that
\[
T_{\can}=\bigoplus\limits_{0\leq \vx\leq \vec{c}}\co(\vx)
\]
is a canonical tilting sheaf in $\coh\X$. In this subsection, we
show that it is the unique tilting bundle in $\coh\X$ consisting of
line bundles up to twist, that is, each titling bundle consisting of
line bundles has the form
\[
T_L=\bigoplus\limits_{0\leq \vx\leq \vec{c}}L(\vx), \text{\ for\
some\ line\ bundle\ } L.
\]

\begin{lemma}\label{exceptional free of line bundle}
For any line bundle $L$ and $\vx\in \mathbb{L}$, $L\oplus L(\vx)$ is
extension-free if and only if $$-\vec{c}\leq\vx\leq\vec{c}.$$ In
particular, if additional $\delta(\vx)\geq 0$, then $\vx$ satisfies
one of the following conditions:
\begin{equation}\label{three choices of x}
(i)\ 0\leq\vx\leq\vec{c}; \ \ (ii)\ \vx=\vx_1-\vx_2;\ \ (iii)\
\vx=\vx_i-k\vx_3,\text{\ for\ }i=1, 2,\ 0< k\leq\frac{n}{2}.
\end{equation}

\begin{proof}
For any $\vx, \vec{y}\in \mathbb{L}$, by Serre duality (\ref{Serre
duality}), we have
\begin{equation}\label{formula of extension between line bundles}
\Ext^1(L(\vec{y}), L(\vx))=D\Hom(L(\vx),
L(\vec{\omega}+\vec{y}))=DS_{\vec{\omega}+\vec{y}-\vx}.
\end{equation}
Hence by (\ref{two possibilities of x})
\[\Ext^1(L, L(\vx))=0 \text{\ if\ and\ only\ if\ }\vec{\omega}-\vx\leq
\vec{\omega}+\vc, \text{\ that\ is,\ } \vx\geq -\vc;
\]
and
\[\Ext^1(L(\vx),
L)=0 \text{\ if\ and\ only\ if\ }\vec{\omega}+\vx\leq
\vec{\omega}+\vc, \text{\ that\ is,\ } \vx\leq \vc.
\]
Thus, $L\oplus L(\vx)$ is extension-free if and only if
\begin{equation}\label{bound of x}
-\vec{c}\leq\vx\leq\vec{c}.
\end{equation}
In particular, if additional
\begin{equation}\label{addtional condition}\delta(\vx)\geq 0,
\end{equation}we write $\vx=\sum\limits_{i=1}^{3}l_i\vx_i+l\vc$ in
normal form and consider all the possibilities of $\vx$ satisfying
both conditions (\ref{bound of x}) and (\ref{addtional condition})
according to the number $m$ of non-zero coefficients of $\vx_i$ for
$1\leq i\leq 3$.
\begin{itemize}
\item[\emph{Case 1}:]$m=0$, then $\vx=0$ of $\vc$.
\item[\emph{Case 2}:]$m=1$, then $\vx=l_i\vx_i$ for some $1\leq i\leq 3$
and $0<l_i<p_i$, i.e., $0<\vx<\vc$.
\item[\emph{Case 3}:]$m=2$, then $\vx=\vx_1+\vx_2-\vc$ or
$\vx_i+l_3\vx_3-\vc$ for $i=1,2$ and $\frac{n}{2}\leq l_3<n$. That
is, $\vx=\vx_1-\vx_2$ or $\vx_i-k\vx_3$ for $i=1,2$ and $0<k\leq
\frac{n}{2}$.
\item[\emph{Case 4}:]$m=3$, none of $\vx$ satisfied.
\end{itemize}
Summarize up, (\ref{three choices of x}) holds. This finishes the
proof.
\end{proof}
\end{lemma}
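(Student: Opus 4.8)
The plan is to reduce everything to the single formula \eqref{formula of extension between line bundles} and then do elementary bookkeeping in the ordered group $\mathbb{L}$. First I would observe that since twisting by $\vec{y}$ is an autoequivalence, $\Ext^1(L,L(\vx))$ and $\Ext^1(L(\vx),L)$ are the only two extension groups to consider, and both are computed by Serre duality \eqref{Serre duality} as $D S_{\vec{\omega}+\vec{y}-\vx}$ for the appropriate choice of twists. Using $S_{\vec{z}}\neq 0$ iff $\vec{z}\geq 0$, and the dichotomy \eqref{two possibilities of x} (each element is either $\geq 0$ or $\leq \vec{\omega}+\vec{c}$), I would translate the vanishing of each $\Ext^1$ into an inequality: $\Ext^1(L,L(\vx))=0 \iff \vx\geq -\vec{c}$ and $\Ext^1(L(\vx),L)=0 \iff \vx\leq \vec{c}$. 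Conjoining these gives $-\vec{c}\leq \vx\leq \vec{c}$, which is the first assertion \eqref{bound of x}.

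For the "in particular" part I would impose the extra hypothesis $\delta(\vx)\geq 0$ and enumerate the finitely many $\vx$ in the band $-\vec{c}\leq\vx\leq\vec{c}$. The natural device is to write $\vx=\sum_{i=1}^3 l_i\vx_i+l\vec{c}$ in normal form \eqref{normal form of x} with $0\leq l_i\leq p_i-1$, and stratify by $m$, the number of nonzero $l_i$. The key numerical input is the degree map $\delta(\vx_1)=\delta(\vx_2)=\tfrac{\mathrm{lcm}(2,n)}{2}$ and $\delta(\vx_3)=\tfrac{\mathrm{lcm}(2,n)}{n}$, from which $\delta(\vec{c})=\mathrm{lcm}(2,n)$; the sign constraint $\delta(\vx)\geq 0$ then pins down the allowed values of $l$ relative to the $l_i$, while the band constraint forces $l\in\{-1,0\}$ once the normal form is fixed. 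Case by case ($m=0,1,2,3$) I would check which normalized representatives survive both constraints, recovering exactly the three families in \eqref{three choices of x} and verifying that $m=3$ yields no solution.

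The main obstacle I anticipate is purely the case $m=2$: there one must see that the genuinely new (non-line-bundle-twist) possibilities $\vx=\vx_1-\vx_2$ and $\vx=\vx_i-k\vx_3$ arise from normalized elements $\vx_1+\vx_2-\vec{c}$ and $\vx_i+l_3\vx_3-\vec{c}$ with $\tfrac{n}{2}\leq l_3<n$, and that rewriting $l_3\vx_3-\vec{c}=-(n-l_3)\vx_3=-k\vx_3$ gives precisely $0<k\leq \tfrac{n}{2}$. This requires keeping careful track of the interplay between the normal-form coefficients and the shifted representatives so that the stated ranges for $k$ come out correctly, and checking that the sign condition $\delta(\vx)\geq 0$ is the thing that excludes the symmetric negatives. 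The remaining cases are immediate: $m=0$ gives $\vx\in\{0,\vec{c}\}$, $m=1$ gives $0<\vx<\vec{c}$ (absorbed into family (i)), and $m=3$ forces $\delta$ too large in absolute value to fit the band while staying nonnegative, so it contributes nothing. Collecting the survivors yields \eqref{three choices of x}, completing the argument.
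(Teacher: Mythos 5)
Your proposal follows essentially the same route as the paper's own proof: Serre duality plus the dichotomy \eqref{two possibilities of x} to obtain the band $-\vec{c}\leq\vx\leq\vec{c}$, then a normal-form case analysis stratified by the number $m$ of nonzero coefficients $l_i$ to extract the three families in \eqref{three choices of x}, including the same rewriting $\vx_i+l_3\vx_3-\vec{c}=\vx_i-k\vx_3$ with $k=n-l_3$ in the case $m=2$. The only minor slip is attributing the exclusion of $m=3$ to the sign of $\delta$; in fact the band constraint alone already rules it out, but this does not affect correctness.
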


The following is the main result in this subsection.
\begin{theorem}\label{tilting line bundle theorem}
Let $T$ be a tilting bundle in $\coh\X$ consisting of line bundles.
Then
\[
T=\bigoplus\limits_{0\leq\vx\leq\vc}L(\vx), \text{\ for\ some\ line\
bundle\ } L.
\]
\end{theorem}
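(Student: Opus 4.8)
The plan is to reduce the statement to a counting argument and then feed it into the refined list (\ref{three choices of x}) provided by the previous lemma. First I would record the standard consequence of $T$ being a tilting object: since $T$ generates $D^b(\coh\X)$ and its (pairwise non-isomorphic, exceptional) indecomposable summands have classes forming a $\mathbb{Z}$-basis of $K_0(\X)$, the number of summands of $T$ equals $\rk K_0(\X)$, which by the stated basis $\{\co(\vx)\mid 0\le\vx\le\vc\}$ is exactly the cardinality of $\{\vx\mid 0\le\vx\le\vc\}$. Consequently it suffices to exhibit one line bundle $L$ such that every indecomposable summand of $T$ lies in $\{L(\vx)\mid 0\le\vx\le\vc\}$: a set of that many distinct summands must then exhaust this set, giving $T=\bigoplus_{0\le\vx\le\vc}L(\vx)$.

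Next I would fix the reference bundle $L$ by a grading shift. Among the finitely many summands I would choose one of minimal degree, and among those of minimal degree one that is minimal for the order of $\mathbb{L}$; after twisting I may assume $L=\co$ is a summand, so that every other summand has the form $\co(\vy)$ with $\delta(\vy)\ge 0$, and moreover no summand $\co(\vy)$ with $\delta(\vy)=0$ has $\vy<0$. Because $\Ext^1(T,T)=0$ forces $\co\oplus\co(\vy)$ to be extension-free, Lemma~\ref{exceptional free of line bundle} applies and each $\vy$ is of type (i), (ii) or (iii) in (\ref{three choices of x}).

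The crux is to eliminate types (ii) and (iii), i.e.\ to show every summand already satisfies $0\le\vy\le\vc$. The key observation is that both remaining types have $\vy<0$: in normal form $\vx_1-\vx_2=\vx_1+\vx_2-\vc$ and $\vx_i-k\vx_3=\vx_i+(n-k)\vx_3-\vc$, each with $\vc$-coefficient $-1$. Hence $\Hom(\co(\vy),\co)=S_{-\vy}\ne 0$, and the slope inequality for indecomposable bundles forces $\mu\co(\vy)\le\mu\co$, that is $\delta(\vy)\le 0$; combined with $\delta(\vy)\ge 0$ this yields $\delta(\vy)=0$. But then $\co(\vy)$ is a minimal-degree summand with $\vy<0$, contradicting the minimality built into the choice of $L$. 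Therefore no summand is of type (ii) or (iii), every summand is $\co(\vy)$ with $0\le\vy\le\vc$, and the counting step closes the argument.

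I expect the genuine obstacle to be precisely this elimination of the ``degree-zero but order-negative'' twists: $\vx_1-\vx_2$ and, for $n$ even, $\vx_i-\tfrac{n}{2}\vx_3$ share the minimal degree of $\co$, so a naive ``pick the summand of least degree'' normalization cannot exclude them. The two-stage minimality (least degree, then least in the order of $\mathbb{L}$ among those) together with the slope inequality is what closes this gap, while the type-(iii) twists with $k<\tfrac{n}{2}$, having strictly positive degree, are ruled out directly by the same slope estimate.
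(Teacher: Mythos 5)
Your reduction to counting (a tilting bundle has exactly $n+3$ indecomposable summands, so it suffices to show every summand lies in $\{L(\vx)\mid 0\le\vx\le\vc\}$) and your normalization of $L$ agree with the paper's setup. The gap is in your elimination of types (ii) and (iii), and it is fatal. From the normal forms $\vx_1-\vx_2=\vx_1+\vx_2-\vc$ and $\vx_i-k\vx_3=\vx_i+(n-k)\vx_3-\vc$ having $\vc$-coefficient $-1$ you conclude ``$\vy<0$'' and hence $\Hom(\co(\vy),\co)=S_{-\vy}\neq 0$. But the order on $\mathbb{L}$ is only a \emph{partial} order: a negative $\vc$-coefficient gives $\vy\not\ge 0$, which is not the same as $\vy\le 0$. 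In fact $-(\vx_1-\vx_2)=\vx_2-\vx_1=\vx_1+\vx_2-\vc$ and $-(\vx_i-k\vx_3)=\vx_i+k\vx_3-\vc$ also have $\vc$-coefficient $-1$, so these $\vy$ are \emph{incomparable} with $0$; consequently $S_{-\vy}=0$ and $\Hom(\co(\vy),\co)=0$. With this, your slope estimate says nothing (for type (iii) with $k<\tfrac n2$ the needed nonzero morphism simply does not exist), and your two-stage minimality yields no contradiction in the degree-zero cases either: order-minimality of $L$ only excludes summands strictly below $L$, and an incomparable summand of equal degree is perfectly consistent with it.

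Moreover, no repair that inspects only the pair $\{L,\,L(\vy)\}$ can work: for $\vy=\vx_1-\vx_2$ the two line bundles have equal slope and no nonzero morphisms \emph{or} extensions in either direction, and such pairs genuinely occur inside tilting bundles --- $E^l=L\oplus L(\vx_1-\vx_2)$ is an allowed part $T^+(E_i)\in\Br^+(E_i)$ of the mixed-type tilting bundles in Theorem \ref{classification theorem}. The obstruction is global, which is exactly what the paper's proof exploits: if $\vx_i-k\vx_3\in I$ with largest such $k=b$, then for every $l\ge n-b+1$ one has $\Ext^1(L(l\vx_3),L(\vx_i-b\vx_3))=DS_{(l+b)\vx_3-\vx_i+\vec{\omega}}\neq 0$, so all those $L(l\vx_3)$ are forbidden and counting the remaining candidates gives $|I|\le n+3-a<n+3$; similarly, $\vx_1-\vx_2\in I$ forbids every $L(k\vx_3)$ with $k\ge 1$ because $\Ext^1(L(k\vx_3),L(\vx_1-\vx_2))=DS_{(k-1)\vx_3}\neq 0$, again making $|I|$ too small. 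Any correct argument must play the offending summand against the \emph{other} summands in this way; as written, your key nonvanishing claim is false and the proof does not close.
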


\begin{proof}
Let $L$ be a direct summand of $T$ with minimal slope, and
$T=\bigoplus\limits_{\vx\in I}L(\vx)$ for some finite set $I$. By
\cite{GL}, $T_{\can}$ is a tilting sheaf in $\coh\X$ with $n+3$ many
indecomposable direct summands. It follows that the order
\begin{equation}\label{order of I}|I|=n+3.
\end{equation}
For each $\vx\in I$, by Lemma \ref{exceptional free of line bundle},
$\vx$ satisfies one of the conditions of (\ref{three choices of x}).
We claim the condition (iii) there doesn't hold. Otherwise, there
exist some $0<k\leq\frac{n}{2}$ and $i=1$ or $2$, such that $\vx_i-k
\vx_3\in I$. Assume $k_{\min}=a$, $k_{\max}=b$. Then for any $l\geq
n-b+1$, by (\ref{formula of extension between line bundles}),
$$\Ext^1(L(l\vx_3),
L(\vx_i-b\vx_3))=DS_{(l+b)\vx_3-\vx_i+\vec{\omega}}\neq 0,
$$since $(l+b)\vx_3-\vx_i+\vec{\omega}=\vx_1+\vx_2-\vx_i+(l+b-1)\vx_3-\vc\geq
\vx_1+\vx_2-\vx_i>0$. Hence $l\vx_3\notin I$.
Thus by Lemma \ref{exceptional free of line bundle}, each element
$\vx$ from $I$ satisfies one of the following:
\[(i)\ 0\leq\vx\leq (n-b)\vx_3; \quad (ii)\ \vx=\vx_1-\vx_2; \quad
(iii)\ \vx=\vx_i-k\vx_3, \ a\leq k\leq b.
\]
It follows that $$|I|\leq (n-b+1)+1+(b-a+1)=n+3-a<n+3,$$ a
contradiction to (\ref{order of I}). This finishes the claim.
Therefore,
\begin{equation}\label{thm1.1}
I\subseteq \{\vx\in\mathbb{L} | 0\leq \vx\leq \vc \text{\quad
or\quad }\vx=\vx_1-\vx_2\}.
\end{equation}
Moreover, for any $k\geq 1$,
\[
\Ext^1(L(k\vx_3), L(\vx_1-\vx_2))=DS_{(k-1)\vx_3}\neq 0.
\]
Combining with (\ref{order of I}) and (\ref{thm1.1}),
$$I=\{\vx\in\mathbb{L}|0\leq \vx\leq \vc \}.$$
That is, $$T=\bigoplus\limits_{0\leq\vx\leq\vc}L(\vx).$$ This
finishes the proof.
\end{proof}

\subsection{Tilting bundle not all consisting of line bundles}
In this section, we classify the tilting bundles not all consisting
of line bundles. Notice that if $n=2$, then all the indecomposable
direct summands of such a tilting bundle form a slice in the
Auslander-Reiten quiver of $\vect\X$, hence the classification is
obvious. Thus we only consider the case $n\geq 3$. We decompose such
a tilting bundle into three parts with respect to rank two
indecomposable direct summands of minimal and maximal length.

\subsubsection{Slice in the category of vector bundles}
According to \cite{KLM}, the Auslander-Reiten quiver
$\Gamma(\vect\X)$ of $\vect\X$ consists of a single standard
component having the form $\mathbb{Z}\widetilde{\Delta}$, where
$\widetilde{\Delta}$ is the extended Dynkin diagram with associated
Dynkin diagram $\Delta=[2,2,n]$. Moreover, the category
$\ind(\Gamma(\vect\X))$ of indecomposable vector bundles on $\X$ is
equivalent to the mesh category of $\Gamma(\vect\X)$. Furthermore,
each indecomposable vector bundle has rank one or two.

For each indecomposable vector bundle $X$, denote by $\cs(X\to)$
(resp. $\mathcal{S}(\to X)$) the slice beginning from $X$ (resp.
ending to $X$) in $\Gamma(\vect\X)$, for the definition of slice we
refer to \cite{R}. More precisely,
\[
\cs(X\to)=\{Y\in\ind(\vect\X)|\Hom(X, Y)\neq 0 \text{\ and\ }\Hom(X,
\tau^m Y)=0  \text{\ for\ }m\geq 1\}, \]and
\[
\cs(\to X)=\{Y\in\ind(\vect\X)|\Hom(Y, X)\neq 0 \text{\ and\
}\Hom(\tau^{-m}Y,  X)=0 \text{\ for\ }m\geq 1\}. \]

The following lemma plays an important role in classifying the
tilting bundles which are not all consisting of line bundles in
$\coh\X$.
\begin{lemma}\label{homomorphisms related to rank two bundle}Let $E$ and $X$ be
two indecomposable vector bundles with $\rk E=2$.
\begin{itemize}
\item[(1)] If $\Hom(E, X)\neq 0$, then $\Hom(E, \tau^{-1}X)\neq 0$.
\item[(2)] If $\Hom(X, E)\neq 0$, then $\Hom(\tau X, E)\neq 0$.
\end{itemize}
\end{lemma}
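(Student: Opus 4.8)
The plan is to prove part (1) in full and to obtain part (2) by the symmetric (dual) argument, the two statements being interchanged by passing to the opposite category, under which the Auslander--Reiten translation $\tau$ becomes $\tau^{-1}$ and both indecomposability and the rank-two condition are preserved. Throughout I would use the slope estimate recalled above, that $\Hom(A,B)\neq 0$ forces $\mu A\leq\mu B$ for indecomposable bundles, together with one Ext-vanishing statement, which is the only place Serre duality enters: if $A,B$ are indecomposable vector bundles with $\mu A\leq\mu B$, then $\Ext^1(A,B)=0$. Indeed, by (\ref{Serre duality}) we have $\Ext^1(A,B)=D\Hom(B,\tau A)$, and $\mu(\tau A)=\mu A+\delta(\vec{\omega})<\mu A\leq\mu B$ since $\delta(\vec{\omega})<0$; hence $\Hom(B,\tau A)=0$.

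For (1), assume $\Hom(E,X)\neq 0$, so $\mu E\leq\mu X$ and therefore $\Ext^1(E,X)=0$. As $\coh\X$ has Serre duality it has almost split sequences, and the one with left-hand term $X$ reads $0\to X\xrightarrow{u}Z\xrightarrow{v}\tau^{-1}X\to 0$, with all terms in $\vect\X$. Applying $\Hom(E,-)$ and using $\Ext^1(E,X)=0$ would collapse the long exact sequence to
\begin{equation}
0\to\Hom(E,X)\xrightarrow{u_*}\Hom(E,Z)\xrightarrow{v_*}\Hom(E,\tau^{-1}X)\to 0.
\end{equation}
Thus $\Hom(E,\tau^{-1}X)\cong\Coker(u_*)$, and the desired conclusion $\Hom(E,\tau^{-1}X)\neq 0$ becomes \emph{equivalent} to the strict inequality $\dim\Hom(E,Z)>\dim\Hom(E,X)$. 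Part (2) is handled identically: from $\Hom(X,E)\neq 0$ one gets $\mu X\leq\mu E$, hence $\Ext^1(X,E)=0$; applying $\Hom(-,E)$ to the almost split sequence $0\to\tau X\to W\to X\to 0$ ending at $X$ reduces the claim to $\dim\Hom(W,E)>\dim\Hom(X,E)$.

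It remains to establish these inequalities, and this is exactly where the hypothesis $\rk E=2$ enters. I would work inside the standard component $\Gamma(\vect\X)=\mathbb{Z}\widetilde{\Delta}$ of \cite{KLM}, where $\Hom$-spaces are computed in the mesh category, so that $\dim\Hom(E,-)$ is the hammock function attached to the vertex $E$ and $\dim\Hom(-,E)$ its co-hammock. The key structural input is that a rank-two vertex has valency at least two: otherwise the almost split sequence at $E$ would have a single middle term, of rank $2\,\rk E=4$, contradicting that every indecomposable bundle has rank one or two. With this, the inequality $\dim\Hom(E,Z)>\dim\Hom(E,X)$ is the instance at $X$ of the statement that the support of the hammock of a vertex of valency $\geq 2$ is closed under $\tau^{-1}$, which I would verify by tracking the two boundary sectional paths issuing from $E$ and checking that the $\tau^{-1}$-shift cannot leave the forward cone; part (2) follows from the symmetric $\tau$-closure of the co-hammock.

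The main obstacle is precisely this last combinatorial step on $\mathbb{Z}\widetilde{\Delta}$ (of type $\widetilde{D}_{n+2}$) and the need to see transparently why valency two is what makes it work. The closure genuinely fails at valency one: for $E=X=L$ a line bundle one has $\Hom(L,L)\neq 0$ while, by (\ref{homomorphism between line bundles}), $\Hom(L,\tau^{-1}L)=S_{-\vec{\omega}}=0$, since $-\vec{\omega}$ is not $\geq 0$ in the normal form (\ref{normal form of x}); equivalently the almost split sequence at the tip $L$ has a single rank-two middle term $Z$ with $\dim\Hom(L,Z)=\dim\Hom(L,L)$, so no growth occurs. At a vertex of valency $\geq 2$ the relevant mesh relation involves at least two summands, and the cancellation that forces vanishing in the line-bundle case no longer takes place. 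Making this precise is the technical heart; if the boundary analysis proves awkward, the fallback I would use is to reduce $\Hom(E,-)$ to explicit graded pieces $S_{\vec{y}-\vec{x}}$ through a line-bundle filtration $0\to\co(\vec{a})\to E\to\co(\vec{b})\to 0$ of the rank-two bundle $E$ and compute the dimensions directly.
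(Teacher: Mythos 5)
Your reduction is sound as far as it goes: for indecomposable bundles $\Hom(E,X)\neq 0$ forces $\mu E\leq\mu X$, and since $\delta(\vec{\omega})<0$ for weight type $(2,2,n)$, Serre duality indeed gives $\Ext^1(E,X)=0$, so applying $\Hom(E,-)$ to the almost split sequence $0\to X\to Z\to\tau^{-1}X\to 0$ yields your short exact sequence and the equivalence $\Hom(E,\tau^{-1}X)\neq 0\Leftrightarrow\dim\Hom(E,Z)>\dim\Hom(E,X)$. But at this point the argument stops being a proof: the strict inequality is, as you yourself say, \emph{equivalent} to the statement that the support of $\Hom(E,-)$ is closed under $\tau^{-1}$, which is verbatim the lemma being proved, so without an independent verification the reduction is circular. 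Your ``valency $\geq 2$'' observation concerns the mesh at $E$, whereas the inequality you need lives at the mesh of an arbitrary $X$; the dangerous meshes are those at the four valency-one boundary orbits of $\mathbb{Z}\widetilde{\Delta}$ (the line-bundle orbits $\cl_1,\cl'_1,\cl_{n+1},\cl'_{n+1}$), where the recursion $h(\tau^{-1}Y)=h(Z_Y)-h(Y)$ for $h=\dim\Hom(E,-)$ can genuinely drop --- exactly the phenomenon in your $E=X=L$ example. Showing that the drop never reaches zero when $\rk E=2$ is the whole content of the lemma, and your plan to ``track the two boundary sectional paths'' is only announced, not carried out; the fallback is also not routine, since the filtration $0\to\co(\vec{a})\to E\to\co(\vec{b})\to 0$ only gives a four-term sequence $0\to\Hom(\co(\vec{b}),X)\to\Hom(E,X)\to\Hom(\co(\vec{a}),X)\to\Ext^1(\co(\vec{b}),X)$, so dimensions do not reduce to graded pieces $S_{\vec{y}-\vec{x}}$ without controlling the Ext term, and rank-two targets $X$ are worse still. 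This is a genuine gap, conceded in your own words as ``the technical heart.''

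For comparison, the paper sidesteps global hammock combinatorics by a minimal-slope induction on counterexamples: choose $X$ with $\Hom(E,X)\neq 0$ and $\Hom(E,\tau^{-1}X)=0$ of minimal slope. If $\rk X=2$, the AR-sequence argument closes immediately (some middle term $Y_i$ receives a nonzero map from $E$, and $\rk Y_i\leq 2=\rk(\tau^{-1}X)$ forces the irreducible map $Y_i\to\tau^{-1}X$ to be injective) --- this is essentially the easy half of your scheme. The real work is the rank-one case $X=L$: there the paper uses the Auslander bundle $E_L$, deducing $\Hom(E,E_L)\neq 0$ from $\Ext^1(E,\tau L)=D\Hom(L,E)=0$, then the second AR sequence $0\to\tau L'\to E_L\to L'\to 0$ with $L'=L(\vx_1-\vx_2)$ together with the minimality of $\mu L$ to get $\Hom(E,L')\neq 0$, and finally the inclusion $L'\hookrightarrow L'(\vx_3)=\tau^{-1}L$. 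Any completion of your approach would have to supply precisely such a local argument at the boundary meshes, at which point you will have reproduced the paper's Case 1.
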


\begin{proof}It suffices to prove statement (1), since the
arguments for statement (2) are dual. For contradiction, we assume
$X$ is of minimal slope satisfying
\[\Hom(E, X)\neq 0 \text{\ but\ } \Hom(E, \tau^{-1}X)= 0.
\]
Concerning the rank of $X$, we consider the following two cases.
\begin{itemize}
\item[\emph{Case 1:}]$\rk X=1$, then $X$ is a line bundle $L$. In the following sub-quiver of $\Gamma(\vect\X)$,
set $L'=L(\vx_1-\vx_2)$, and $E_L$ the Auslander bundle
corresponding to $L$.
$$
\xymatrix{
  \tau L \ar[rd] & & L \ar[rd] &  & \tau^{-1}L \\
& E_{L} \ar[rd] \ar[ru] &  &\tau^{-1}E_{L} \ar[rd] \ar[ru] &  \\
  \tau L' \ar[ru] & & L' \ar[ru]&  &\tau^{-1}L'}
  $$
We claim that
\begin{equation}\label{claim of (1)}
\Hom(E, L')\neq 0.
\end{equation}
In fact, applying $\Hom(E, -)$ to the Auslander-Reiten sequence
\[0\to\tau L\to E_L\to L\to 0,
\]
we obtain an exact sequence:
\begin{equation}\label{for contradiction (1)1}
\Hom(E, E_L)\to\Hom(E, L)\to\Ext^1(E, \tau L).
\end{equation}
By assumption, $\Hom(E, L)\neq 0$, which implies $\mu E<\mu L$. It
follows that $\Ext^1(E, \tau L)=D\Hom(L, E)=0$. Hence
\begin{equation}\label{for contradiction (1)2}
\Hom(E, E_L)\neq 0.
\end{equation}
Now by applying $\Hom(E, -)$ to the Auslander-Reiten sequence
\[0\to\tau L'\to E_L\to L'\to 0,
\]
we obtain an exact sequence:
\[
0\to\Hom(E, \tau L')\to\Hom(E, E_L)\to\Hom(E, L').
\]
If $\Hom(E, \tau L')= 0$, then (\ref{claim of (1)}) follows from
(\ref{for contradiction (1)2}); if $\Hom(E, \tau L')\neq 0$, then by
the minimality of $L$, (\ref{claim of (1)}) also holds; this
finishes the claim. Then by applying $\Hom(E, -)$ to the injective
map $L'\rightarrowtail L'(\vx_3)$, it follows from (\ref{claim of
(1)}) that $\Hom(E, \tau^{-1}L)=\Hom(E, L'(\vx_3))\neq 0$, a
contradiction.
\item[\emph{Case 2:}]
$\rk X=2$, then there exists the following Auslander-Reiten sequence
\[0\to X\to \oplus_i Y_i\to \tau^{-1}X\to 0.
\]
Since $\Hom(E, X)\neq 0$, there exists some $i$, such that $\Hom(E,
Y_i)\neq 0$. Moreover, $\rk Y_i\leq 2=\rk (\tau^{-1}X)$ implies the
irreducible map $Y_i\to \tau^{-1}X$ is injective. It follows that
$\Hom(E, \tau^{-1}X)\neq 0$, a contradiction.
\end{itemize}
\end{proof}

More general, we have
\begin{corollary}\label{general homomorphisms related to rank two bundle}Let $E$ and $X$ be two
indecomposable vector bundles with $\rk E=2$. Then for any $m\geq
0$,
\begin{itemize}
\item[(1)] $\Hom(E, X)\neq 0$ implies $\Hom(E, \tau^{-m}X)\neq 0$;
\item[(2)] $\Hom(X, E)\neq 0$ implies $\Hom(\tau^m X, E)\neq 0$.
\end{itemize}
\end{corollary}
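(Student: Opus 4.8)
The plan is to prove both statements by induction on $m$, bootstrapping from Lemma \ref{homomorphisms related to rank two bundle}, which is exactly the case $m=1$. I will only spell out the argument for statement (1), since statement (2) follows by the dual reasoning.

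For the base case $m=0$ the conclusion coincides with the hypothesis $\Hom(E, X)\neq 0$, so there is nothing to prove, and the case $m=1$ is Lemma \ref{homomorphisms related to rank two bundle}(1) itself. For the inductive step, I would assume that $\Hom(E, \tau^{-m}X)\neq 0$ has been established for some $m\geq 0$ and deduce the statement for $m+1$. The one point to check before invoking the Lemma is that $\tau^{-m}X$ is again an indecomposable vector bundle: since $\tau$ is the autoequivalence of $\coh\X$ given by the grading shift $Y\mapsto Y(\vec{\omega})$, it carries $\vect\X$ into itself, preserves indecomposability, and fixes the rank, so that $\rk(\tau^{-m}X)=\rk X$ and $\tau^{-m}X\in\ind(\vect\X)$. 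Applying Lemma \ref{homomorphisms related to rank two bundle}(1) with $X$ replaced by $\tau^{-m}X$ (and $E$ unchanged, still of rank two) then yields $\Hom(E, \tau^{-1}(\tau^{-m}X))=\Hom(E, \tau^{-(m+1)}X)\neq 0$, completing the induction.

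There is no serious obstacle here: all the real content sits in Lemma \ref{homomorphisms related to rank two bundle}, and the only thing to track is the harmless observation that $\tau$ preserves the class of indecomposable vector bundles, so that the Lemma may be iterated. Statement (2) is obtained by the same induction, replacing $\tau^{-1}$ by $\tau$ throughout and invoking Lemma \ref{homomorphisms related to rank two bundle}(2) at each step.
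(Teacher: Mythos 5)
Your proof is correct and takes exactly the route the paper intends: the paper states this corollary without any proof, treating it as an immediate consequence of Lemma \ref{homomorphisms related to rank two bundle}, and your induction (with the harmless check that $\tau$, being the grading shift by $\vec{\omega}$, preserves indecomposable vector bundles and their ranks) is precisely that iteration.
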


\begin{remark}
As a consequence, the expression of the slices with respect to rank
two bundle $E$ can be simplified as follows.
\begin{equation}\label{definition of slice from X}
\cs(E\to)=\{Y\in\ind(\vect\X)|\Hom(E, Y)\neq 0 \text{\ and\ }\Hom(E,
\tau Y)=0\},
\end{equation} and
\begin{equation}\label{definition of slice to X}
\cs(\to E)=\{Y\in\ind(\vect\X)|\Hom(Y, E)\neq 0 \text{\ and\
}\Hom(\tau^{-1}Y,  E)=0\} .
\end{equation}
\end{remark}

By considering the tilting bundle containing rank two indecomposable
direct summand, we have the following important observation:
\begin{lemma}\label{at most one member}Let $E$ be an
indecomposable vector bundle with $\rk E=2$. Then each tilting
object in $\coh\X$ contains at most one member from $\tau$-orbit of
$E$.
\end{lemma}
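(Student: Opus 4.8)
The plan is to argue by contradiction. Suppose a tilting object $T$ in $\coh\X$ has two distinct indecomposable summands lying in the $\tau$-orbit of $E$, say $\tau^a E$ and $\tau^b E$ with $a<b$ (these are non-isomorphic since $\Gamma(\vect\X)=\mathbb{Z}\widetilde{\Delta}$ carries no $\tau$-periodicity), and set $m=b-a\geq 1$. Since $T$ is extension-free, $\Ext^1(X,Y)$ vanishes for any two summands $X,Y$ of $T$; I will show that $\Ext^1(\tau^a E,\tau^b E)$ cannot vanish, which is the desired contradiction.

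Next I reduce this $\Ext$ group to a single $\Hom$-space built from $E$. Because $\tau$ is a $k$-equivalence of $\coh\X$, shifting both arguments gives $\Ext^1(\tau^a E,\tau^b E)\cong\Ext^1(E,\tau^m E)$. Serre duality (\ref{Serre duality}) then yields $\Ext^1(E,\tau^m E)=D\Hom(\tau^m E,\tau E)$, and applying the auto-equivalence $\tau^{-1}$ repeatedly to move both entries into a common representative gives $\Hom(\tau^m E,\tau E)\cong\Hom(E,\tau^{1-m}E)$. Altogether $\Ext^1(\tau^a E,\tau^b E)\cong D\Hom(E,\tau^{-(m-1)}E)$.

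The crucial input is Corollary \ref{general homomorphisms related to rank two bundle}(1), which is available precisely because $\rk E=2$. Since every indecomposable vector bundle is exceptional, $\Hom(E,E)=\End(E)=k\neq 0$; taking $X=E$ in the corollary shows $\Hom(E,\tau^{-k}E)\neq 0$ for all $k\geq 0$. With $k=m-1\geq 0$ this gives $\Hom(E,\tau^{1-m}E)\neq 0$, so $\Ext^1(\tau^a E,\tau^b E)\neq 0$, contradicting the extension-freeness of $T$. Hence $T$ meets the $\tau$-orbit of $E$ in at most one isomorphism class.

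The only substantive steps are the index bookkeeping in the second paragraph and the appeal to Corollary \ref{general homomorphisms related to rank two bundle}. The rank-two hypothesis is exactly what makes that corollary applicable, so that the nonvanishing of $\Hom(E,-)$ on $E$ propagates to every $\tau^{-k}E$ with $k\geq 0$; it is this one-sided persistence that prevents two orbit members from coexisting as summands. The main point to watch is the direction of the $\tau$-shifts, so that the nonzero $\Hom$-space produced by the corollary matches the $\Ext$ group that the tilting condition forces to be zero; once the powers of $\tau$ are tracked carefully, no further obstacle remains.
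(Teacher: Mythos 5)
Your proof is correct and follows essentially the same route as the paper: argue by contradiction, reduce $\Ext^1(\tau^a E,\tau^b E)$ via Serre duality and $\tau$-shifts to $D\Hom(E,\tau^{-(m-1)}E)$, and apply Corollary \ref{general homomorphisms related to rank two bundle}(1) with $X=E$ to see this space is nonzero. The paper's version is just a compressed form of your index bookkeeping, so there is nothing further to add.
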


\begin{proof}For contradiction, we assume there exists a tilting sheaf
in $\coh\X$ containing $\tau^{m_1}E$ together with $\tau^{m_2}E$ for
some $m_1<m_2$. Then by Corollary \ref{general homomorphisms related
to rank two bundle},
\[\Ext^1(\tau^{m_1}E, \tau^{m_2}E)=D\Hom(E, \tau^{m_1-m_2+1}E)\neq 0.
\]Hence $\tau^{m_1}E\oplus\tau^{m_2}E$ is not extension-free, a
contradiction.
\end{proof}

\subsubsection{Domains and properties}

Assume $\mathcal{S}(\co\to)$ in $\Gamma(\vect\X)$
is given by:
\[
\xymatrix{
  \co \ar[dr] & &&& \co(\vx_1)\\
 & E_2 \ar[ld] \ar[r] & \cdots \ar[r] &E_n \ar[ru]\ar[rd] &  \\
  \co(\vx_3) &&&& \co(\vx_2)  }
\]
Denote by $\cl_i$ the $\tau$-orbit of $E_i$, for $2\leq i\leq n$,
and by $\cl_1$ (resp. $\cl'_1, \cl_{n+1}, \cl'_{n+1}$) the
$\tau$-orbit of $\co$ (resp. $\co(\vx_3)$, $\co(\vx_1)$,
$\co(\vx_2)$).

\begin{definition}\label{definition of orbits}
For each indecomposable vector bundle $X$, define the \emph{length}
of $X$ by
\[l(X)=\begin{cases}
\begin{array}{cl}
1, &\text{\ if\ } X\in \mathcal{L}_1 \text{\ or\ } \mathcal{L}'_1;\\
i, &\text{\ if\ } X\in \mathcal{L}_i \text{\ for\ } 2\leq i\leq n;\\
n+1, &\text{\ if\ } X\in \mathcal{L}_{n+1} \text{\ or\ }
\mathcal{L}'_{n+1}.
\end{array}
\end{cases}
\]
\end{definition}

\begin{definition}Let $X$ be an indecomposable vector bundle,
the \emph{domain} of $X$, denoted by $\Dom(X)$, is defined as the
subset of $\vect\X$ consisting of all indecomposable vector bundle
$Y$ satisfying that there exist integers $ m_1,m_2\geq 0$, such that
$\tau^{m_1} Y\in\cs(\to X)$ and $\tau^{-m_2}Y\in\cs(X\to)$. In
particular, denote by
\[\Dom^+(X)=\{Y\in\Dom(X)|l(Y)\leq l(X)\}
\]
and
\[\Dom^-(X)=\{Y\in\Dom(X)|l(Y)\geq l(X)\}.
\]
\end{definition}


\begin{example}Let $\X$ be the weighted projective line of weight type
$(2,2,3)$. The following is a sub-quiver of $\Gamma(\vect\X)$.
$$\xymatrix@-1pc{
  L \ar[drrd] &&&&\tau^{-1}L \ar[ddrr]  &&&& \tau^{-2}L\\
  \tau L(\vx_3) \ar[rrd] &&&& L(\vx_3) \ar[rrd]  &&&& \tau^{-1}L(\vx_3) \\
   && E_2 \ar[rrdd]\ar[rru]\ar[rruu] &&&& \tau^{-1}E_2 \ar[rrdd]\ar[rru]\ar[rruu] \\
   \\
  \tau E_3 \ar[rrdd]\ar[rrd]\ar[rruu]&&&& E_3 \ar[rrdd]\ar[rrd]\ar[rruu]&&&& \tau^{-1}E_3\\
  && \tau L(\vx_1) \ar[rru]  &&&& L(\vx_1) \ar[rru]  \\
  && \tau L(\vx_2) \ar[rruu]   &&&& L(\vx_2) \ar[rruu]    }
$$
In this picture, we have
$$\begin{array}{l}
\cs(L\to)=\{L, L(\vx_1),
L(\vx_2), L(\vx_3), E_2, E_3\}; \\[0.5em]
\cs(E_3\to)=\{E_3, \tau^{-1}E_2, L(\vx_1),
L(\vx_2),\tau^{-1}L(\vx_3), \tau^{-2}L\}; \\[0.5em]
\cs(\to E_3)=\{L, \tau L(\vx_1),\tau L(\vx_2),\tau L(\vx_3), E_2,
E_3\};
\end{array}$$
and $\Dom(E_3)$ has the form below.
$$\xymatrix@-1pc{
  L \ar[drrd] &&&&\tau^{-1}L \ar[ddrr]  &&&& \tau^{-2}L\\
  \tau L(\vx_3) \ar[rrd] &&&& L(\vx_3) \ar[rrd]  &&&& \tau^{-1}L(\vx_3) \\
   && E_2 \ar[rrdd]\ar[rru]\ar[rruu] &&&& \tau^{-1}E_2 \ar[rru]\ar[rruu] \\
   \\
 &&&& E_3 \ar[rrdd]\ar[rrd]\ar[rruu]&&&&\\
  && \tau L(\vx_1) \ar[rru]  &&&& L(\vx_1)   \\
  && \tau L(\vx_2) \ar[rruu]   &&&& L(\vx_2)     }
$$
Moreover, $\Dom^+(E_3)$ (resp. $\Dom^-(E_3)$) consists of all the
bundles posit on the above (resp. below) of $E_3$, in each case,
containing $E_3$.

\end{example}

\begin{lemma}\label{Dom iff}Let $X$ and $Y$ be two indecomposable vector bundles. Then
\[Y\in\Dom(X) \text{\quad if\ and\ only\ if\quad }X\in\Dom(Y).
\]
\end{lemma}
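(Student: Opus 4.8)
The plan is to translate membership in $\Dom(X)$ into a statement about a single integer coordinate and then exploit the fact that $\tau$ is an autoequivalence. First I would record that, by the slice theory in $\Gamma(\vect\X)=\mathbb{Z}\widetilde{\Delta}$, each of $\cs(\to X)$ and $\cs(X\to)$ is a complete section, hence meets every $\tau$-orbit in exactly one point. Thus for the orbit of $Y$ there is a unique integer $j_0$ with $\tau^{j_0}Y\in\cs(\to X)$, and a unique integer $p$ with $\tau^{p}Y\in\cs(X\to)$. Unwinding the definition, $Y\in\Dom(X)$ is then equivalent to the pair of inequalities $j_0\geq 0$ and $p\leq 0$: the first encodes the existence of $m_1\geq 0$ with $\tau^{m_1}Y\in\cs(\to X)$, and the second the existence of $m_2\geq 0$ with $\tau^{-m_2}Y\in\cs(X\to)$.

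The key step is a duality between the two kinds of slices. Since $\tau$ is a $k$-equivalence, for all integers $j,m$ one has $\Hom(\tau^{j}Y,X)\cong\Hom(Y,\tau^{-j}X)$ and, more generally, $\Hom(\tau^{j-m}Y,X)\cong\Hom(Y,\tau^{m-j}X)$. Comparing the defining conditions of the slices term by term, I would deduce
\[
\tau^{j}Y\in\cs(\to X)\iff \tau^{-j}X\in\cs(Y\to),
\qquad
\tau^{p}Y\in\cs(X\to)\iff \tau^{-p}X\in\cs(\to Y),
\]
for every $j,p\in\mathbb{Z}$; each equivalence is merely the observation that the single ``$\neq 0$'' condition and the whole family of ``$=0$'' conditions defining one slice are carried isomorphically onto those defining the other. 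Consequently the unique integer attached to the orbit of $X$ by the section $\cs(Y\to)$ is $-j_0$, and the one attached by $\cs(\to Y)$ is $-p$.

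Finally I would assemble the symmetry. Applying the characterisation of the first paragraph to $\Dom(Y)$, the membership $X\in\Dom(Y)$ is equivalent to the coordinate of $X$'s orbit in $\cs(\to Y)$ being $\geq 0$ and its coordinate in $\cs(Y\to)$ being $\leq 0$, i.e. to $-p\geq 0$ and $-j_0\leq 0$, which is exactly $j_0\geq 0$ and $p\leq 0$ — the condition for $Y\in\Dom(X)$. This gives the claim. The main obstacle, and the only place where genuine structural input is required, is the very first paragraph: one must know that $\cs(\to X)$, $\cs(X\to)$, $\cs(\to Y)$ and $\cs(Y\to)$ are honest complete sections of $\mathbb{Z}\widetilde{\Delta}$, so that the coordinates $j_0$ and $p$ are well defined and unique. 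Uniqueness is immediate from the section property, while existence amounts to checking that each $\tau$-orbit actually meets the slice (which rests on the fact that a sufficiently large slope gap between indecomposable vector bundles forces a nonzero $\Hom$); I note that even the degenerate case, in which an orbit never meets a slice, is handled automatically, since the $\Hom$-level correspondence above then makes both $Y\in\Dom(X)$ and $X\in\Dom(Y)$ fail simultaneously. Once the section property is in hand, the rest is a formal consequence of $\tau$ being an autoequivalence.
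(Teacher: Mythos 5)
Your proof is correct, and its decisive step --- the displayed equivalences $\tau^{j}Y\in\cs(\to X)\iff\tau^{-j}X\in\cs(Y\to)$ and $\tau^{p}Y\in\cs(X\to)\iff\tau^{-p}X\in\cs(\to Y)$, obtained by matching the defining Hom-conditions term by term under the autoequivalence $\tau$ --- is exactly the content of the paper's own proof. Where you differ is in the packaging. The paper simply chases the existential quantifiers in the definition of $\Dom$ through these equivalences: $Y\in\Dom(X)$ iff there exist $m_1,m_2\geq 0$ with $\tau^{m_1}Y\in\cs(\to X)$ and $\tau^{-m_2}Y\in\cs(X\to)$, iff $\tau^{-m_1}X\in\cs(Y\to)$ and $\tau^{m_2}X\in\cs(\to Y)$, iff $X\in\Dom(Y)$; this is purely formal and uses nothing beyond the definitions and the fact that $\tau$ is an equivalence. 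You instead first upgrade each slice to a complete section of $\mathbb{Z}\widetilde{\Delta}$, so as to speak of the unique coordinates $j_0$ and $p$, and then compare signs. That costs you an extra structural input --- that every $\tau$-orbit actually \emph{meets} $\cs(\to X)$ and $\cs(X\to)$, which needs the slope-gap/Riemann--Roch argument you sketch --- which the paper's argument never uses; uniqueness, by contrast, is free from the slice definition, as you note. You partly recognize this yourself: your remark that the degenerate case of an orbit missing a slice is handled by the Hom-level correspondence alone shows the section property is dispensable, and dropping it collapses your proof to the paper's. So your version buys a concrete coordinate picture (legitimate in this setting, where the paper itself treats intersections such as $\cs(\to E)\cap\cl_1$ as single objects), while the paper's version buys brevity and independence from any completeness property of slices.
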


\begin{proof}
By definition, $Y\in\Dom(X)$ if and only if there exist $m_1,m_2\geq
0$, such that $\tau^{m_1} Y\in\cs(\to X)$ and
$\tau^{-m_2}Y\in\cs(X\to)$, equivalently, if and only if $X\in
\cs(\tau^{m_1} Y\to)$ and $X\in\cs(\to\tau^{-m_2}Y)$, thus if and
only if $\tau^{-m_1} X\in \cs(Y\to)$ and $\tau^{m_2}X\in\cs(\to Y)$,
that is, $X\in\Dom(Y)$.
\end{proof}

\begin{proposition}\label{proposition of domain}
For any indecomposable vector bundles $X$ and $Y$, the following
statements are equivalent.
\begin{itemize}
\item[(1)]$X\in\Dom^+(Y)$;
\item[(2)]$\Dom^+(X)\subseteq\Dom^+(Y)$;
\item[(3)]$Y\in\Dom^-(X)$;
\item[(4)]$\Dom^-(Y)\subseteq\Dom^-(X)$.
\end{itemize}
\end{proposition}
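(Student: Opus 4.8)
The plan is to establish the two implications $(1)\Rightarrow(2)$ and $(3)\Rightarrow(4)$; combined with the trivial reverse implications and the symmetry of $\Dom$, they yield the full equivalence. The implications $(2)\Rightarrow(1)$ and $(4)\Rightarrow(3)$ are immediate once we observe that $X\in\Dom^+(X)$ and $Y\in\Dom^-(Y)$. Indeed $\Hom(X,X)=k\neq0$, and since $\delta(\vec\omega)<0$ forces $\tau^{-m}X$ to have strictly larger slope than $X$ for $m\geq1$, the slope criterion for nonzero maps between indecomposable bundles recalled in Section~2 gives $\Hom(\tau^{-m}X,X)=0$; hence $X\in\cs(\to X)$, and symmetrically $X\in\cs(X\to)$, so $X\in\Dom(X)$ and thus $X\in\Dom^+(X)\cap\Dom^-(X)$. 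Moreover $(1)\Leftrightarrow(3)$ is purely formal: by Lemma~\ref{Dom iff} the relation $\Dom$ is symmetric, and the length conditions $l(X)\leq l(Y)$ and $l(Y)\geq l(X)$ coincide, so $X\in\Dom^+(Y)\Leftrightarrow Y\in\Dom^-(X)$. Everything therefore reduces to $(1)\Rightarrow(2)$ and its length-reversed analogue $(3)\Rightarrow(4)$.

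For $(1)\Rightarrow(2)$, assume $X\in\Dom^+(Y)$ and take $Z\in\Dom^+(X)$. Then $l(Z)\leq l(X)\leq l(Y)$, so the length requirement is free and it remains to show $Z\in\Dom(Y)$. Thus the heart of the matter is the transitivity statement that $Z\in\Dom(X)$ together with $X\in\Dom(Y)$ implies $Z\in\Dom(Y)$ when the lengths are monotone. I would prove this inside the standard Auslander--Reiten component $\Gamma(\vect\X)\cong\mathbb{Z}\widetilde{\Delta}$ of \cite{KLM}. Using Corollary~\ref{general homomorphisms related to rank two bundle} to replace the defining conditions of the slices by their simplified one-step forms (\ref{definition of slice from X}) and (\ref{definition of slice to X}), the condition ``$\exists\,m\geq0:\ \tau^{m}W$ lies on a fixed complete section'' becomes a one-sided bound on the $\tau$-exponent of $W$ within its orbit. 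In these terms $\Dom(X)$ is exactly the region enclosed between the two sections $\cs(\to X)$ and $\cs(X\to)$, both passing through $X$, and $\Dom^+(X)$ is its ``upward cone'': the bundles of length $\leq l(X)$ lying weakly between these two sections, whose bounding edges emanate from the apex $X$ with the fixed mesh slopes.

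The decisive step is then a nesting of cones: the hypothesis $X\in\Dom^+(Y)$ places the apex $X$ inside the upward cone of $Y$, and because the bounding sections of any apex have the same slopes, the cone of $X$ is contained in the cone of $Y$. Translating back through the exponent bounds yields $Z\in\Dom(Y)$, completing $(1)\Rightarrow(2)$. The implication $(3)\Rightarrow(4)$ is obtained by the identical argument with ``upward cone'' replaced by ``downward cone'' and all length inequalities reversed.

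I expect the main obstacle to be making this nesting precise: one must show that, on every $\tau$-orbit of length $\leq l(X)$, the $\tau$-interval determined by $\cs(\to X)$ and $\cs(X\to)$ lies inside that determined by $\cs(\to Y)$ and $\cs(Y\to)$. This bookkeeping is cleanest if one assigns to each indecomposable $W$ a coordinate pair, namely its length $l(W)$ together with a horizontal position, and verifies the inequalities orbit by orbit. The only delicate cases are the two ``forked'' ends of $\widetilde{\Delta}=\widetilde{D}_{n+2}$, the orbits of length $1$ (those of $\co$ and $\co(\vx_3)$) and of length $n+1$ (those of $\co(\vx_1)$ and $\co(\vx_2)$), where $l$ takes the same value on two distinct orbits and so must be treated separately from the linear middle part.
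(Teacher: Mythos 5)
Your reduction and the easy parts track the paper exactly: the paper also disposes of $(2)\Rightarrow(1)$ and $(4)\Rightarrow(3)$ via $X\in\Dom^+(X)$ (your slope-based verification that $X\in\cs(\to X)\cap\cs(X\to)$ is correct, and more careful than the paper's bare assertion), and it also gets $(1)\Leftrightarrow(3)$ formally from Lemma~\ref{Dom iff} together with the coincidence of the two length conditions. The divergence is at the only substantive implication, $(1)\Rightarrow(2)$. The paper proves it directly from the definition of $\Dom$: choosing $m'_1\geq 0$ with $\tau^{m'_1}X\in\cs(\to Y)$ and, for $Z\in\Dom^+(X)$, $m''_1\geq 0$ with $\tau^{m''_1}Z\in\cs(\to X)$, it asserts $\tau^{m'_1+m''_1}Z\in\cs(\to Y)$, and dually on the side of $\cs(Y\to)$. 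You instead reformulate this as a ``nesting of cones'', justify it only by the remark that the bounding sections of any apex have the same slopes, and then explicitly defer the verification (``I expect the main obstacle to be making this nesting precise\dots''). Since that deferred verification \emph{is} the proposition, what you have written is a proof plan, not a proof: this is a genuine gap.

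Moreover, the deferred step is not the routine bookkeeping you suggest, because the ``equal slopes'' claim fails at the forked ends of $\widetilde{\Delta}$ --- exactly the cases you flag as delicate but never treat. Take $n=3$ and $L=\co$, so that $\cs(\to E_3)$ and $\cs(E_3\to)$ are as in the paper's example. By (\ref{homomorphism between line bundles}), $\Hom(\tau\co(\vx_3),\co)=S_{\vx_1+\vx_2-\vc}=0$ while $\Hom(\tau^{2}\co(\vx_3),\co)=S_{\vx_3}\neq 0$, so the slice $\cs(\to\co)$ meets the orbit $\cl'_1$ at $\tau^{2}\co(\vx_3)$; by contrast the paper's example shows that $\cs(\to E_3)$ meets $\cl'_1$ at $\tau\co(\vx_3)$ and meets $\cl_1$ at $\co$ itself. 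Thus the section ending at the fork vertex $\co$ is skewed by an extra $\tau$-step on the sibling fork orbit relative to the section ending at $E_3$: the cones of different apexes do \emph{not} have the same shape. Concretely, $\tau^{2}\co(\vx_3)\in\Dom^+(\co)$ (one also checks $\tau^{-2}\bigl(\tau^{2}\co(\vx_3)\bigr)=\co(\vx_3)\in\cs(\co\to)$), yet $\tau^{2}\co(\vx_3)\notin\Dom(E_3)$ because $\cs(\to E_3)$ meets $\cl'_1$ only at $\tau\co(\vx_3)$, even though $\co\in\Dom^+(E_3)$. So the nesting you want to invoke genuinely fails when the apex $X$ lies on a fork orbit; a correct completion has to restrict the apexes (for instance to the rank-two orbits, which is how the proposition is actually used later in the paper) or rework the treatment of $\cl_1,\cl'_1,\cl_{n+1},\cl'_{n+1}$. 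Your instinct that the forked ends are the crux was right --- but the same example shows that neither your slope heuristic nor the paper's one-line exponent addition survives there unmodified, so flagging the difficulty cannot substitute for resolving it.
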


\begin{proof}
Firstly, we show that statements (1) and (2) are equivalent. In
fact, if (2) holds, then (1) follows from $X\in\Dom^+(X)$. On the
other hand, assume (1) holds, then by definition, there exists some
$m'_{1}\geq 0$, such that $\tau^{m'_{1}}X\in\cs(\to Y)$. For any
indecomposable vector bundle $Z\in\Dom^+(X)$, there exists some
$m^{''}_{1}\geq 0$, such that $\tau^{m^{''}_{1}} Z\in\cs(\to X)$.
Let $m_1= m'_1+m^{''}_{1}$. Then $m_1\geq 0$ and
$\tau^{m_1}Z\in\cs(\to Y)$. Similarly, there exists some $m_2\geq
0$, such that $\tau^{-m_2}Z\in\cs(\to Y)$. Hence $Z\in\Dom^+(Y)$,
that is, (2) holds, as claimed.

By dually, we have statements (3) and (4) are equivalent. Moreover,
by definition, $X\in\Dom^+(Y)$ if and only if $X\in\Dom(Y)$ and
$l(X)\leq l(Y)$, thus if and only if $Y\in\Dom^-(X)$ by Lemma
\ref{Dom iff}. That is, statements (1) and (3) are equivalent, this
finishes the proof.
\end{proof}

For rank two indecomposable vector bundles, there is an equivalent
description of their domains, related to extension-free and then
tilting objects.

\begin{lemma}\label{iff condition of extension free}Let $E$ and $X$ be
two indecomposable vector bundles with $\rk E=2$. Then $X\in\Dom(E)$
if and only if $E\oplus X$ is extension-free.
\end{lemma}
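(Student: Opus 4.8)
The plan is to translate both sides of the asserted equivalence into statements about the two families of Hom-spaces $\Hom(\tau^i X, E)$ and $\Hom(E,\tau^j X)$, $i,j\in\mathbb{Z}$, and then to match them.

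First I would rewrite extension-freeness. Since each indecomposable vector bundle is exceptional, $\Ext^1(E,E)=0=\Ext^1(X,X)$ automatically, so $E\oplus X$ is extension-free precisely when $\Ext^1(E,X)=0=\Ext^1(X,E)$. Applying Serre duality (\ref{Serre duality}) gives $\Ext^1(E,X)=D\Hom(X,\tau E)$ and $\Ext^1(X,E)=D\Hom(E,\tau X)$, and since $\tau$ is an autoequivalence $\Hom(X,\tau E)\cong\Hom(\tau^{-1}X,E)$. Hence $E\oplus X$ is extension-free if and only if $\Hom(\tau^{-1}X,E)=0$ and $\Hom(E,\tau X)=0$.

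Next I would unwind membership in $\Dom(E)$. By definition $X\in\Dom(E)$ means there exist $m_1,m_2\ge 0$ with $\tau^{m_1}X\in\cs(\to E)$ and $\tau^{-m_2}X\in\cs(E\to)$, where I use the simplified descriptions \eqref{definition of slice from X} and \eqref{definition of slice to X}, valid because $\rk E=2$. Thus $\tau^{m_1}X\in\cs(\to E)$ says $\Hom(\tau^{m_1}X,E)\ne 0$ while $\Hom(\tau^{m_1-1}X,E)=0$, and $\tau^{-m_2}X\in\cs(E\to)$ says $\Hom(E,\tau^{-m_2}X)\ne 0$ while $\Hom(E,\tau^{1-m_2}X)=0$. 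By Corollary \ref{general homomorphisms related to rank two bundle} the set $I=\{i\mid\Hom(\tau^i X,E)\ne 0\}$ is upward closed and $J=\{j\mid\Hom(E,\tau^j X)\ne 0\}$ is downward closed; moreover, since $\cs(\to E)$ and $\cs(E\to)$ are complete slices (sections) of the standard component $\mathbb{Z}\widetilde{\Delta}$, each meets the $\tau$-orbit of $X$ exactly once, so $I$ and $J$ are nonempty half-lines, say $I=[i_0,\infty)$ and $J=(-\infty,j_0]$. The displayed conditions then force $m_1=i_0$ and $m_2=-j_0$, so the requirement $m_1,m_2\ge 0$ is equivalent to $i_0\ge 0$ and $j_0\le 0$, that is, to $\Hom(\tau^{-1}X,E)=0$ and $\Hom(E,\tau X)=0$.

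Comparing the two reductions yields the claimed equivalence. The routine inputs are Serre duality and the monotonicity of Corollary \ref{general homomorphisms related to rank two bundle}; the step that genuinely needs the geometry of $\vect\X$ is the nonemptiness of $I$ and $J$ (equivalently, that each slice meets every $\tau$-orbit), and I expect this to be the main obstacle, since without it one could a priori have $E\oplus X$ extension-free while $X\notin\Dom(E)$. I would secure it either by invoking that $\cs(\to E)$ and $\cs(E\to)$ are sections of the standard Euclidean component $\mathbb{Z}\widetilde{\Delta}$, or by a slope estimate: as $j\to-\infty$ one has $\mu(\tau^j X)\to+\infty$, which kills $\Ext^1(E,\tau^j X)=D\Hom(\tau^j X,\tau E)$ and makes the Euler form $\langle E,\tau^j X\rangle$ positive, forcing $\Hom(E,\tau^j X)\ne 0$; the argument for $\Hom(\tau^i X,E)$ as $i\to+\infty$ is symmetric. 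Once nonemptiness is in place, the remaining sign bookkeeping from the previous paragraph completes the proof.
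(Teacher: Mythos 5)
Your proposal is correct, and its core reduction is the same as the paper's: translate both slice conditions into the vanishing statements $\Hom(\tau^{-1}X,E)=0$ and $\Hom(E,\tau X)=0$ via the simplified descriptions (\ref{definition of slice from X})--(\ref{definition of slice to X}) and the monotonicity of Corollary \ref{general homomorphisms related to rank two bundle}, then convert these into $\Ext^1$-vanishing by Serre duality (\ref{Serre duality}). The genuine difference is in the converse. The paper proves necessity exactly along your lines and then dismisses sufficiency with ``going the steps of the preceding proof backwards''; but going backwards is not purely formal, because from $\Hom(\tau^{-1}X,E)=0$ and $\Hom(E,\tau X)=0$ one must still \emph{produce} integers $m_1,m_2\geq 0$ realizing the slice memberships, and this requires precisely what you isolate: that the sets $I=\{i\mid\Hom(\tau^iX,E)\neq 0\}$ and $J=\{j\mid\Hom(E,\tau^jX)\neq 0\}$ are nonempty, i.e.\ that the $\tau$-orbit of $X$ actually meets both slices $\cs(\to E)$ and $\cs(E\to)$. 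Your two ways of securing this are both sound: the fact that Hom-defined slices in the standard component $\mathbb{Z}\widetilde{\Delta}$ are sections meeting every $\tau$-orbit (exactly once), or the slope argument, with the caveat that the positivity of the Euler form $\langle E,\tau^jX\rangle$ for a single $j$ is not immediate --- one should use the averaged Riemann--Roch formula over a $\tau$-period to get one nonvanishing Hom and then spread it by Corollary \ref{general homomorphisms related to rank two bundle}. So your write-up is, if anything, more complete than the paper's own proof at the step it leaves implicit; the bookkeeping with half-lines $I=[i_0,\infty)$, $J=(-\infty,j_0]$ and the identification $m_1=i_0$, $m_2=-j_0$ is accurate.
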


\begin{proof}If $X\in\Dom(E)$, there exist $m_1, m_2\geq 0$, such
that $$\tau^{m_1} X\in\cs(\to E) \text{\ and\ }
\tau^{-m_2}X\in\cs(E\to).$$ By (\ref{definition of slice from X})
and (\ref{definition of slice to X}), we have \[\Hom(\tau^{m_1-1} X,
E)=0 \text{\ and\ } \Hom(E, \tau^{-m_2+1} X)=0.
\]
Then by Corollary \ref{general homomorphisms related to rank two
bundle},
\[\Hom(\tau^{-1} X, E)=0 \text{\ and\ } \Hom(E,
\tau X)=0.
\]
Using Serre duality, we get
\[\Ext^1(E, X)=0 \text{\ and\ } \Ext^1(X,
E)=0.
\]
That is, $E\oplus X$ is extension-free.

The sufficiency follows by similar considerations, by going the
steps of the preceding proof backwards.
\end{proof}

More general, we have
\begin{lemma}\label{extension-free of domains}
Let $E$ be an indecomposable vector bundle of rank two. Then for any
indecomposable objects $X\in\Dom^+(E)$ and $Y\in\Dom^-(E)$, $X\oplus
Y$ is extension-free.
\end{lemma}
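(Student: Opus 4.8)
The plan is to reduce, via Serre duality (\ref{Serre duality}), the assertion that $X\oplus Y$ is extension-free to the two vanishing statements $\Hom(Y,\tau X)=0$ and $\Hom(X,\tau Y)=0$; indeed $\Ext^1(X,Y)=D\Hom(Y,\tau X)$ and $\Ext^1(Y,X)=D\Hom(X,\tau Y)$. Everything therefore comes down to producing these two $\Hom$-vanishings.

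The key step is to pass from the hypotheses, which are phrased relative to $E$, to a single relation directly between $X$ and $Y$. Since $X\in\Dom^+(E)$, Proposition \ref{proposition of domain} (equivalence of (1) and (4)) gives $\Dom^-(E)\subseteq\Dom^-(X)$; as $Y\in\Dom^-(E)$ this yields $Y\in\Dom^-(X)\subseteq\Dom(X)$. Hence, by the very definition of the domain, there exist integers $m_1,m_2\geq 0$ with $\tau^{m_1}Y\in\cs(\to X)$ and $\tau^{-m_2}Y\in\cs(X\to)$. This step uses only the domain calculus of Proposition \ref{proposition of domain} and Lemma \ref{Dom iff}, and makes no reference to the ranks of $X$ and $Y$, so I expect no case distinction between line bundles and rank two bundles to be needed.

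It then remains to read off the two vanishings directly from the defining conditions of the slices. From $\tau^{-m_2}Y\in\cs(X\to)$ one has $\Hom(X,\tau^{m}(\tau^{-m_2}Y))=0$ for every $m\geq 1$; choosing $m=m_2+1\geq 1$ gives $\Hom(X,\tau Y)=0$, hence $\Ext^1(Y,X)=0$. Dually, from $\tau^{m_1}Y\in\cs(\to X)$ one has $\Hom(\tau^{-m}(\tau^{m_1}Y),X)=0$ for every $m\geq 1$; choosing $m=m_1+1\geq 1$ gives $\Hom(\tau^{-1}Y,X)=0$, and applying the autoequivalence $\tau$ this reads $\Hom(Y,\tau X)=0$, hence $\Ext^1(X,Y)=0$. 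Combining the two, $X\oplus Y$ is extension-free.

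The only point that needs care is the bookkeeping of the $\tau$-shifts: one must check that the exponents produced by the slice conditions, namely $m=m_i+1$, are genuinely $\geq 1$, which holds precisely because $m_1,m_2\geq 0$. I do not anticipate a real analytic obstacle here; the substance of the argument is entirely absorbed into the domain calculus, so the main thing to get right is the correct invocation of Proposition \ref{proposition of domain} to land $Y$ in $\Dom(X)$, after which the slice definitions deliver the conclusion mechanically.
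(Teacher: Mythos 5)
Your proof is correct, and while its first step coincides with the paper's, the mechanism by which you conclude is genuinely different. Both arguments begin identically, applying Proposition \ref{proposition of domain} (the equivalence of (1) and (4)) to convert the hypotheses into $Y\in\Dom^-(X)\subseteq\Dom(X)$. At that point the paper makes a case distinction: if $\rk X=2$ or $\rk Y=2$ it cites Lemma \ref{iff condition of extension free}, a lemma confined to rank-two bundles because its proof relies on the simplified slice descriptions (\ref{definition of slice from X})--(\ref{definition of slice to X}) and Corollary \ref{general homomorphisms related to rank two bundle}; when $X$ and $Y$ are both line bundles that lemma is unavailable, and the paper instead appeals to the explicit structure of $\Dom(E)$ together with Lemma \ref{exceptional free of line bundle}. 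You avoid the case split entirely by unwinding $Y\in\Dom(X)$ against the \emph{general} definition of the slices, in which the vanishings $\Hom(X,\tau^{m}Z)=0$, resp.\ $\Hom(\tau^{-m}Z,X)=0$, are required for \emph{all} $m\geq 1$: taking $m=m_2+1$ and $m=m_1+1$ gives $\Hom(X,\tau Y)=0$ and $\Hom(Y,\tau X)=0$ with no restriction on ranks, and Serre duality (\ref{Serre duality}) finishes. In effect you prove the ``only if'' half of Lemma \ref{iff condition of extension free} for indecomposable bundles of arbitrary rank, exposing that the rank-two hypothesis there is an artifact of working with the simplified slice conditions. The paper's route buys economy, since it reuses lemmas already in place; yours buys uniformity and, in particular, handles the two-line-bundle case rigorously where the paper only remarks that it is ``easy to see''. (Both proofs tacitly use the exceptionality of indecomposable vector bundles, quoted from \cite{L}, for the diagonal vanishings $\Ext^1(X,X)=0=\Ext^1(Y,Y)$.)
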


\begin{proof}
By Proposition \ref{proposition of domain}, $X\in\Dom^+(E)$ implies
$E\in\Dom^-(X)$, and then from $Y\in\Dom^-(E)$ we get
$Y\in\Dom^-(X)$, equivalently, $X\in\Dom^+(Y)$. Hence, if $\rk X=2$
or $\rk Y=2$, then by Lemma \ref{iff condition of extension free},
$X\oplus Y$ is extension-free. If else, $X, Y$ are both line
bundles. Then from the structure of $\Dom(E)$ and by Lemma
\ref{exceptional free of line bundle}, it's easy to see that
$X\oplus Y$ is also extension-free, as claimed.
\end{proof}

\subsubsection{Classification Theorem}
Now we will give our main result of this section. Before giving the
classification theorem, we still need some preparations.

\begin{lemma}\label{bound of order I}Let $E$ be an indecomposable
vector bundle of rank two with $\cs(\to E)\cap\cl_1=L$, and
$F=\bigoplus\limits_{\vx\in I}L(\vx)$ be a direct sum of pair-wise
distinct line bundles from $\Dom^+(E)$. If $E\oplus F$ is
extension-free, then the order $|I|\leq l(E)$.
\end{lemma}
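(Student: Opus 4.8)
The plan is to reduce the statement to a counting problem inside two $\tau$-orbits of line bundles and then read off extension-freeness from Lemma~\ref{exceptional free of line bundle}. First I would pin down where the summands of $F$ can live. Each summand is a line bundle in $\Dom^+(E)$, and a line bundle has length $1$ or $n+1$ (it lies in $\cl_1,\cl'_1$ or in $\cl_{n+1},\cl'_{n+1}$); since membership in $\Dom^+(E)$ forces length $\le l(E)\le n<n+1$, every summand has length $1$, i.e.\ lies in $\cl_1\cup\cl'_1$. Writing $L=\cs(\to E)\cap\cl_1$ (the hypothesis) and $L'=\cs(\to E)\cap\cl'_1$ for the two length-one members of the slice $\cs(\to E)$, I would then use the definition of $\Dom^+(E)$ together with the standard shape $\mathbb{Z}\widetilde{\Delta}$ of $\Gamma(\vect\X)$ to show that $\Dom^+(E)\cap\cl_1=\{L,\tau^{-1}L,\dots,\tau^{-(l(E)-1)}L\}$ and similarly for $\cl'_1$: each is a chain of exactly $l(E)$ consecutive $\tau^{-1}$-translates, whose two ends are the $\cl_1$- (resp.\ $\cl'_1$-) members of $\cs(\to E)$ and of $\cs(E\to)$.

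Next I would translate extension-freeness into explicit conditions on the exponents. By Serre duality~(\ref{Serre duality}) and (\ref{homomorphism between line bundles}), Lemma~\ref{exceptional free of line bundle} says that $L(\vx)\oplus L(\vy)$ is extension-free iff $-\vc\le\vy-\vx\le\vc$. Inside $\cl_1$ this gives: $\tau^{-a}L\oplus\tau^{-a'}L$ is extension-free iff $(a-a')\vec\omega\in[-\vc,\vc]$; using $2\vec\omega=-2\vx_3$ one checks this holds iff $a\equiv a'\pmod 2$, the auxiliary bound $|a-a'|\le n$ being automatic because $l(E)\le n$. The same holds inside $\cl'_1$. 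For a cross pair I would rewrite $\Ext^1(\tau^{-a}L,\tau^{-b}L')$ and $\Ext^1(\tau^{-b}L',\tau^{-a}L)$ via~(\ref{Serre duality}) as Hom-spaces between $L$ and $L'$ twisted by powers of $\tau$, evaluate them with (\ref{homomorphism between line bundles}), and thereby obtain a band condition of the shape $|a-b|\le l(E)-2$.

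With these three conditions in hand the counting is short: a pairwise extension-free subset may use members of $\cl_1$ of only one parity and members of $\cl'_1$ of only one parity, and all chosen exponents must lie in a common window of width $\le l(E)-2$; bounding the number of same-parity integers in such a window and adding the two orbits yields at most $(l(E)-2)+2=l(E)$, as desired. I expect the genuine obstacle to be the cross-orbit computation of the second paragraph rather than the final count: since $\vx_1-\vx_2$ is $2$-torsion in $\mathbb{L}$, the offset between $L$ and $L'$ is only well defined up to this torsion and depends on $E$ and on the parity of $n$, and the relevant Hom-spaces are non-monotonic in the twisting exponent. Handling this normal-form bookkeeping uniformly---so that the band really has width $\le l(E)-2$ in every case---is the delicate step, while everything else reduces to Lemma~\ref{exceptional free of line bundle} and elementary counting.
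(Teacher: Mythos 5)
Your reduction to the two orbits $\cl_1,\cl'_1$ and your within-orbit analysis are correct and match the facts the paper uses: every summand of $F$ has length one, the line bundles of $\Dom^+(E)$ are exactly the translates $\tau^{-k}L$ and $\tau^{-k}L'$ with $0\le k\le l(E)-1$, and inside one orbit extension-freeness amounts to equality of parities of the exponents. The genuine gap is precisely the step you flag as undone, the cross-orbit condition, and your guess for it is wrong. First, there is no ambiguity about $L'$: both line-bundle members of $\cs(\to E)$ map irreducibly to the $\cl_2$-member of that slice, i.e.\ they are the two line-bundle predecessors of one Auslander bundle, so $L'=L(\vx_1-\vx_2)$ always, independently of $E$ and of the parity of $n$. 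Granting this, the cross computation is the same normal-form check you already did inside $\cl_1$: since $\tau^{-a}L=L(a\vx_3+a(\vx_1-\vx_2))$ and $\tau^{-b}L'=L(b\vx_3+(b+1)(\vx_1-\vx_2))$, and $\vx_1-\vx_2$ is $2$-torsion, the difference of twists is $(b-a)\vx_3$ when $a+b$ is odd and $(b-a)\vx_3+\vx_1-\vx_2$ when $a+b$ is even; Lemma~\ref{exceptional free of line bundle} then shows that $\tau^{-a}L\oplus\tau^{-b}L'$ is extension-free if and only if $a+b$ is odd or $a=b$. So the correct cross-orbit condition is a parity dichotomy, not a band $|a-b|\le l(E)-2$.

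Your band condition is not merely unproven; it is refuted by the extremal configuration the lemma must allow, namely $F=E^u=\bigoplus_{0\le k\le l(E)-1}L(k\vx_3)$, which satisfies $|I|=l(E)$ and for which $E\oplus F$ is extension-free (pairwise differences are $(k-k')\vx_3$ with $|k-k'|\le n$, and each $L(k\vx_3)$ lies in $\Dom^+(E)$; this is exactly one of the maximizers in $\Br^+(E)$). In your coordinates $L(k\vx_3)=\tau^{-k}L$ for $k$ even and $\tau^{-k}L'$ for $k$ odd, so already for $l(E)=4$ it contains the cross pair $(a,b)=(0,3)$ with $|a-b|=3>l(E)-2$. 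Consequently the count in your final paragraph, which rests on that band, is unsound. With the corrected conditions the count can be salvaged: if the two orbits are used with opposite parities, then $|I|\le\lceil l(E)/2\rceil+\lfloor l(E)/2\rfloor=l(E)$; if they are used with equal parities, the cross condition forces all exponents equal, so $|I|\le 2\le l(E)$; if only one orbit is used, $|I|\le\lceil l(E)/2\rceil$. For comparison, the paper bypasses the two-orbit bookkeeping entirely: it writes the candidates as $L(k\vx_3)$ and $L(k\vx_3+\vx_1-\vx_2)$, takes a summand $L(a\vx_3)$ of minimal slope (without loss of generality), applies the list~(\ref{three choices of x}) to shrink the candidate set to $\{L(k\vx_3):a\le k\le l(E)-1\}\cup\{L(a\vx_3+\vx_1-\vx_2)\}$, and then the single computation $\Ext^1(L(k\vx_3),L(a\vx_3+\vx_1-\vx_2))=DS_{(k-a-1)\vx_3}\neq 0$ for $k>a$ yields $|I|\le\max\{2,l(E)-a\}\le l(E)$.
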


\begin{proof}From the structure of $\Gamma(\vect\X)$, we
know that each line bundle from $\Dom^+(E)$ has the form
\[ L(k\vx_3+\vx_1-\vx_2) \text{\quad or\quad } L(k\vx_3), \text{\quad for\quad }
0\leq k\leq l(E)-1.
\]
By symmetry of $\cl_1$ and $\cl'_1$, without loss of generality, we
assume there exists some $0\leq a\leq l(E)-1$, such that $L(a\vx_3)$
is a direct summand of $F$ with minimal slope. Then by Lemma
\ref{exceptional free of line bundle}, for each $\vx\in I$,
\begin{equation}\label{two choices of line bundle}
\vx=a\vx_3+\vx_1-\vx_2 \text{\quad or\quad} \vx=k\vx_3, \text{\quad
for\quad} a\leq k\leq l(E)-1.
\end{equation}
Notice from (\ref{formula of extension between line bundles}) that
$$\Ext^1(L(k\vx_3), L(a\vx_3+\vx_1-\vx_2))=DS_{(k-a-1)\vx_3}.$$
Hence, for any $k>a$, $L(k\vx_3)\oplus L(a\vx_3+\vx_1-\vx_2)$ is not
extension-free. Combining with (\ref{two choices of line bundle}),
we get $|I|\leq \max\{2,l(E)-a\}\leq l(E)$.
\end{proof}

\begin{remark}Keep the notation in Lemma \ref{bound of order I}. Denote by
\[E^u=\bigoplus\limits_{0\leq k\leq
l(E)-1}L(k\vx_3).
\]
In particular, if $l(E)=2$, denote by
\[
E^{l}=L\oplus L(\vx_1-\vx_2).
\]
Now define a set $\Br^+(E)$ as below: if $l(E)>2$, then
$\Br^+(E)=\{E^{u}, E^u(\vx_1-\vx_2)\}$; if $l(E)=2$, then
$\Br^+(E)=\{E^{l}, E^{l}(\vx_3), E^{u}, E^u(\vx_1-\vx_2)\}$. Then
according to the proof of Lemma \ref{bound of order I}, we have
\begin{equation}\label{when the order of I is maximal}
|I|=l(E) \text{\quad if\ and\ only\ if\quad} F\in\Br^+(E).
\end{equation}
\end{remark}

Dually, we have the follow lemma and remark.

\begin{lemma}\label{bound of order I2}Let $E$ be an indecomposable
vector bundle of rank two with $\cs(E\to)\cap\cl_{n+1}=L'$, and
$F=\bigoplus\limits_{\vx\in J}L'(\vx)$ be a direct sum of pair-wise
distinct line bundles from $\Dom^-(E)$. If $E\oplus F$ is
extension-free, then $|J|\leq n-l(E)+2$.
\end{lemma}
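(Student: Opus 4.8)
The plan is to prove Lemma \ref{bound of order I2} by dualizing the proof of Lemma \ref{bound of order I}, since the two statements are mirror images of each other under the symmetry that interchanges $\cs(\to E)$ with $\cs(E\to)$, the orbit $\cl_1$ with $\cl_{n+1}$, and $\Dom^+(E)$ with $\Dom^-(E)$. The first step will be to determine the explicit shape of the line bundles in $\Dom^-(E)$ lying in the $\tau$-orbit $\cl'_{n+1}$ of $L'$. From the structure of the Auslander-Reiten quiver $\Gamma(\vect\X)$ and the definition of length (Definition \ref{definition of orbits}), every line bundle in $\Dom^-(E)$ should have the form $L'(-k\vx_3)$ or $L'(-k\vx_3+\vx_1-\vx_2)$ for $0\leq k\leq n-l(E)+1$; the range of $k$ is now governed by how far the domain extends \emph{below} $E$, giving $n-l(E)+2$ possible values of the twisting exponent rather than the $l(E)$ values appearing in the original lemma.

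Next I would fix a direct summand of $F$ of maximal slope, say $L'(-a\vx_3)$ with $0\leq a\leq n-l(E)+1$ (using the symmetry between $\cl_{n+1}$ and $\cl'_{n+1}$ to avoid the prime case without loss of generality). Invoking Lemma \ref{exceptional free of line bundle} applied to each pair $L'(\vx)\oplus L'(-a\vx_3)$, I would argue that every $\vx\in J$ must be of the form $-a\vx_3+\vx_1-\vx_2$ or $-k\vx_3$ with $0\leq k\leq a$. The key computation is the extension formula (\ref{formula of extension between line bundles}): a calculation of $\Ext^1(L'(-a\vx_3+\vx_1-\vx_2), L'(-k\vx_3))$ analogous to the one in Lemma \ref{bound of order I} should show this is nonzero whenever $k<a$, forcing the mixed summand $-a\vx_3+\vx_1-\vx_2$ and the pure summands $-k\vx_3$ to be incompatible except in a controlled range. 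Combining these constraints bounds $|J|$ by $\max\{2, a+1\}$, which is at most $n-l(E)+2$ once the admissible range $0\leq a\leq n-l(E)+1$ is taken into account.

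\textbf{The main obstacle} will be verifying that the domain $\Dom^-(E)$ really does extend exactly $n-l(E)+1$ steps downward in the $\vx_3$-direction, i.e. pinning down the correct endpoint so that the counting of twists comes out to $n-l(E)+2$ rather than some off-by-one variant. This requires a careful reading of the slice $\cs(E\to)$ together with the position of $E$ in the component $\mathbb{Z}\widetilde{\Delta}$, using that $l(E)$ measures the distance of the $\tau$-orbit $\cl_{l(E)}$ from the boundary orbits $\cl_1,\cl'_1$ and $\cl_{n+1},\cl'_{n+1}$. Once the geometry of $\Gamma(\vect\X)$ fixes this endpoint, the remaining steps are formally dual to the proof already given, and I would simply indicate that the extension-vanishing computations go through verbatim after replacing each twist $k\vx_3$ by $-k\vx_3$ and exchanging the roles of minimal and maximal slope.
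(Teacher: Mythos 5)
Your overall strategy is the right one, and it is exactly what the paper intends: Lemma \ref{bound of order I2} is introduced there with the single word ``dually'', so the proof is meant to be the mirror image of the proof of Lemma \ref{bound of order I}. Your first step --- identifying the line bundles of $\Dom^-(E)$ as $L'(-k\vx_3)$ and $L'(-k\vx_3+\vx_1-\vx_2)$ with $0\leq k\leq n-l(E)+1$ --- is correct, and so is the shape of the final count.

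However, the middle of your argument mixes the two dual normalizations, and as written two of its claims are false. If $L'(-a\vx_3)$ is the summand of \emph{maximal} slope, then the remaining pure summands must satisfy $k\geq a$, not $0\leq k\leq a$: the slope of $L'(-k\vx_3)$ decreases as $k$ grows, so maximality forces every other twist to be at least $a$. Concretely, $F=L'\oplus L'(-\vx_3)$ is a legitimate configuration (both summands lie in $\Dom^-(E)$ and $E\oplus F$ is extension-free), its maximal-slope summand is $L'$, i.e.\ $a=0$, and $L'(-\vx_3)$ violates your constraint. Likewise, by the formula (\ref{formula of extension between line bundles}) one has $\Ext^1(L'(-a\vx_3+\vx_1-\vx_2), L'(-k\vx_3))=DS_{(k-a-1)\vx_3}$, which is nonzero precisely when $k>a$, not when $k<a$; for $k<a$ it is the group with the arguments interchanged, $\Ext^1(L'(-k\vx_3), L'(-a\vx_3+\vx_1-\vx_2))=DS_{(a-k-1)\vx_3}$, that is nonzero. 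There are two consistent repairs, either of which completes the proof: (i) keep ``maximal slope'', in which case the pure summands run over $a\leq k\leq n-l(E)+1$, the only admissible mixed summand is $-a\vx_3+\vx_1-\vx_2$, your displayed Ext group (nonzero for $k>a$) rules out its coexistence with any pure summand of twist $k>a$, and the bound becomes $|J|\leq\max\{2,\,n-l(E)+2-a\}\leq n-l(E)+2$; or (ii) fix instead the summand of \emph{minimal} slope, in which case your constraints $0\leq k\leq a$ and your bound $\max\{2,a+1\}\leq n-l(E)+2$ are correct, but the incompatibility must be drawn from the Ext group in the other order. As it stands, your write-up asserts the maximal-slope normalization together with the minimal-slope conclusions, so the intermediate steps do not cohere and would fail if implemented literally.
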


\begin{remark}Keep the notation in Lemma \ref{bound of order I2}. Denote by
\[E^d=\bigoplus\limits_{0\leq k\leq
n-l(E)+1}L'(-k\vx_3).
\]
In particular, if $l(E)=n$, denote by
\[
E^{r}=L'\oplus L'(\vx_1-\vx_2).
\]
Now define a set $\Br^-(E)$ as below: if $l(E)<n$, then
$\Br^-(E)=\{E^{d}, E^{d}(\vx_1-\vx_2)\}$; if $l(E)=n$, then
$\Br^-(E)=\{E^{r}, E^{r}(-\vx_3), E^{d}, E^{d}(\vx_1-\vx_2)\}$. Then
in Lemma \ref{bound of order I2}, we have
\begin{equation}\label{when the order of J is maximal}
|J|= n-l(E)+2 \text{\quad if\ and\ only\ if\quad} F\in\Br^-(E).
\end{equation}
\end{remark}

For convenience to describe the classification theorem, we need to
introduce a conception named sub-slice.
\begin{definition}\label{sub-slice}Assume $2\leq i\leq j\leq n$. Let $E_k\in \cl_k$ be
indecomposable vector bundles for $i\leq k\leq j$. Then $\{E_k|i\leq
k\leq j\}$ is called a \emph{sub-slice}  from $E_i$ to $E_j$ if it
can be extended to a slice in $\Gamma(\vect\X)$.
\end{definition}




\begin{lemma}\label{generate from E(i) to E(i+1)}
Assume $L\in\cl_1$, and $\{E_k|E_k\in\cl_k, 2\leq k\leq n\}$ is a
sub-slice from $E_2$ to $E_n$ contained in the slice $\cs(L\to)$.
Then for $2\leq k\leq n-1$,
\begin{itemize}
\item[(1)]$E_{k+1}^u=E_k^u\oplus L(k\vx_3)$;
\item[(2)]$E_k\in\langle E_{k+1}, L(k\vx_3),
L((k-1)\vx_3)\rangle$, the smallest full subcategory of $\coh\X$
containing $E_{k+1}, L(k\vx_3)\text{\ and \ } L((k-1)\vx_3)$ closed
under the third term of exact sequence.
\end{itemize}
\end{lemma}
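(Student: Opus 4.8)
The plan is to reduce both parts to one common observation, namely that for every $k$ occurring in the sub-slice the line bundle $\cs(\to E_k)\cap\cl_1$ coincides with the fixed $L$, and then, for part (2), to produce two short exact sequences expressing $E_k$ through $E_{k+1}$ and the two line bundles.

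First I would establish that $\cs(\to E_k)\cap\cl_1=L$ for every $2\le k\le n$. Since the sub-slice lies in $\cs(L\to)$, we have $\Hom(L,E_k)\ne 0$ and $\Hom(L,\tau^m E_k)=0$ for $m\ge 1$; applying the equivalence $\tau^m$ gives $\Hom(\tau^{-m}L,E_k)\cong\Hom(L,\tau^m E_k)=0$ for $m\ge 1$, so by the definition of slice $L\in\cs(\to E_k)$, whence $\cs(\to E_k)\cap\cl_1=L$. By the definition of $E^u$ this yields $E_k^u=\bigoplus_{0\le m\le k-1}L(m\vx_3)$ with the \emph{same} $L$ for every $k$, and part (1) is then immediate, since
\[
E_{k+1}^u=\bigoplus_{0\le m\le k}L(m\vx_3)=E_k^u\oplus L(k\vx_3).
\]

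For part (2) the strategy is to exhibit two short exact sequences whose only terms besides $E_k$ are among the three generators. The irreducible map $E_k\to E_{k+1}$ of the slice should be a monomorphism whose cokernel has rank $0$ and degree $\deg E_{k+1}-\deg E_k$, hence is a simple sheaf $S$, giving $0\to E_k\to E_{k+1}\to S\to 0$. On the other hand, multiplication by $x_3$ realizes the nonzero map $L((k-1)\vx_3)\to L(k\vx_3)$ of (\ref{homomorphism between line bundles}), whose cokernel is a simple sheaf $S'$, giving $0\to L((k-1)\vx_3)\to L(k\vx_3)\to S'\to 0$. Granting $S\cong S'$, the second sequence places $S$ in $\langle E_{k+1},L(k\vx_3),L((k-1)\vx_3)\rangle$ (two terms are generators, so the third is in the subcategory), and then the first sequence places $E_k$ there as well, which is exactly part (2); here one uses that the subcategory, being closed under the third term of a short exact sequence, already contains finite direct sums of its generators.

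The main obstacle is the identification $S\cong S'$, that is, showing that the cokernel of the slice map $E_k\to E_{k+1}$ is the very simple sheaf $\coker(x_3)$. Both are simple of degree $\delta(\vx_3)$ concentrated at the weight-$n$ point, so I first need the degree bookkeeping $\deg E_{k+1}=\deg E_k+\delta(\vx_3)$, which follows from the mesh (Auslander--Reiten) relations in $\Gamma(\vect\X)$ together with $\deg\tau X=\deg X-\delta(\vx_3)$; this also forces the slice map to be injective with a simple cokernel. To pin down that the two simple sheaves actually \emph{agree}, and not merely share rank and degree, I would invoke that the irreducible maps along the $E$-row of $\Gamma(\vect\X)$ are realized by multiplication with $x_3$, so that $\coker(E_k\to E_{k+1})$ is forced to be $\coker(x_3)$. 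Establishing this compatibility is the only nonformal step; once it is in place, the two displayed sequences and the closure property of $\langle E_{k+1},L(k\vx_3),L((k-1)\vx_3)\rangle$ finish the proof.
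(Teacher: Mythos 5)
Your preliminary observation (that $\cs(\to E_k)\cap\cl_1=L$ for every $k$) and part (1) are correct, and are in fact more careful than the paper, which treats (1) as immediate from the definitions. The skeleton of your part (2) --- the two short exact sequences $0\to E_k\to E_{k+1}\to S\to 0$ and $0\to L((k-1)\vx_3)\xrightarrow{x_3} L(k\vx_3)\to S'\to 0$ with $S\cong S'$, followed by closure under the third term --- is also the skeleton of the paper's proof. But the step you explicitly leave open, the identification $S\cong S'$, is precisely the mathematical content of the lemma, and the mechanism you offer for it fails: multiplication by $x_3$ is a map $E_k\to E_k(\vx_3)$, and $E_k(\vx_3)$ lies in the \emph{same} $\tau$-orbit $\cl_k$ as $E_k$. (Twisting by $\vx_3$ swaps $\cl_1$ and $\cl'_1$, hence, being a rank- and adjacency-preserving autoequivalence commuting with $\tau$, it fixes each rank-two orbit; comparing degrees inside the orbit then gives $E_k(\vx_3)\cong\tau^{-1}E_k$.) So the irreducible maps $E_k\to E_{k+1}$, which connect the distinct orbits $\cl_k$ and $\cl_{k+1}$, are never ``realized by multiplication with $x_3$,'' and this heuristic cannot close the gap. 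A smaller slip: $\deg\tau X=\deg X-\delta(\vx_3)$ is the rank-one formula; for rank-two $X$ it is $\deg X-2\delta(\vx_3)$.

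The paper closes this gap by not using the irreducible map at all. By induction on $k$ it produces the filtration $0\to L\to E_k\to L(-\vec{\omega}+(k-2)\vx_3)\to 0$ and realizes $E_k$ as the pullback of $E_{k+1}\twoheadrightarrow L(-\vec{\omega}+(k-1)\vx_3)$ along $x_3\colon L(-\vec{\omega}+(k-2)\vx_3)\to L(-\vec{\omega}+(k-1)\vx_3)$; the resulting exact sequence $0\to E_k\to E_{k+1}\to S_{3,k}\to 0$ then has cokernel equal, \emph{by construction}, to the cokernel of that $x_3$, and twisting by $\vx_1-\vx_2$ (which fixes simple sheaves concentrated at $x_3$, since $x_1$ and $x_2$ act invertibly on them) converts this into $0\to L((k-1)\vx_3)\to L(k\vx_3)\to S_{3,k}\to 0$. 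Your claim $S\cong S'$ is in fact true --- in the range $2\le k\le n-1$ one can check that $\Hom(E_k,E_{k+1})$ is one-dimensional, so the pullback embedding agrees with the irreducible map up to scalar --- but this is a consequence of the paper's construction, not something available to you in advance. To repair your proof you would need either to reproduce this filtration/pullback argument or to give an independent computation identifying which of the $n$ simples in the tube at $x_3$ occurs as $\Coker(E_k\to E_{k+1})$; as written, the proposal is incomplete at its decisive step.
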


\begin{proof}Assertion (1) directly follows from the definition of $E_k^u$.
For assertion (2), we consider the following exact sequence obtained
by induction on $k$: $$0\to L\to E_k\to
L(-\vec{\omega}+(k-2)\vx_3)\to 0,$$ which induces a pullback
commutative diagram as follows:
\[\xymatrix{
  0  \ar[r] & L \ar@{=}[d] \ar[r] & E_k \ar[d]\ar@{}[dr]|-{\circlearrowleft}  \ar[r] & L(-\vec{\omega}+(k-2)\vx_3) \ar[d] \ar[r] & 0 \\
  0 \ar[r]  & L \ar[r]  & E_{k+1} \ar[r]  & L(-\vec{\omega}+(k-1)\vx_3)  \ar[r] & 0.  }\]
Then we obtain an exact sequence:
\begin{equation}\label{exact sequence determine E(k)}
0\to E_k \to
E_{k+1}\to S_{3,k}\to 0,
\end{equation}
where $S_{3,k}$ is a simple sheaf concentrated at the point $\vx_3$
determined by the following exact sequence:
\begin{equation}\label{exact sequence determinate the simple sheaf}
0\to L(-\vec{\omega}+(k-2)\vx_3) \to
L(-\vec{\omega}+(k-1)\vx_3)\to S_{3,k}\to 0.
\end{equation}
Notice that $S_{3,k}(\vx_1-\vx_2)=S_{3,k}$ and
$-\vec{\omega}=-\vx_1+\vx_2+\vx_3$. We obtain the following exact
sequence obtained from (\ref{exact sequence determinate the simple
sheaf}) by taking grading shift of $\vx_1-\vx_2$:
\begin{equation}\label{exact sequence determinate the simple sheaf2}
0\to L((k-1)\vx_3) \to L(k \vx_3)\to S_{3,k}\to 0.
\end{equation}
Combining with (\ref{exact sequence determine E(k)}) and (\ref{exact
sequence determinate the simple sheaf2}), we finish the proof.
\end{proof}

By duality, we have
\begin{lemma}\label{generate from E(i) to E(i+1)2}
Assume $L'\in\cl_{n+1}$, and $\{E_k|E_k\in\cl_k, 2\leq k\leq n\}$ is
a sub-slice from $E_2$ to $E_n$ contained in the slice $\cs(\to
L')$. Then for $2\leq k\leq n-1$,
\begin{itemize}
\item[(1)]$E_k^d=E_{k+1}^d\oplus L'(-(n-k+1)\vx_3)$;
\item[(2)]$E_{k+1}\in\langle E_k, L'(-(n-k+1)\vx_3),
L'(-(n-k)\vx_3)\rangle$.
\end{itemize}
\end{lemma}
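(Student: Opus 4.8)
The statement is the exact dual of Lemma~\ref{generate from E(i) to E(i+1)}, so the plan is to transcribe that proof, interchanging the roles of the source line bundle and the sink line bundle and reversing every short exact sequence through Serre duality \eqref{Serre duality}. Assertion~(1) needs no new idea: since $l(E_k)=k$, the definition of $E^d$ in the remark following Lemma~\ref{bound of order I2} gives $E_k^d=\bigoplus_{0\le m\le n-k+1}L'(-m\vx_3)$ and $E_{k+1}^d=\bigoplus_{0\le m\le n-k}L'(-m\vx_3)$, from which $E_k^d=E_{k+1}^d\oplus L'(-(n-k+1)\vx_3)$ reads off immediately. Here one uses that, because $E_k\in\cs(\to L')$ forces $\Hom(E_k,\tau L')=0$, the line bundle attached to $E_k$ in the definition of $E_k^d$ is indeed the given $L'$.

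For (2) the first step is to build, dually to the filtration $0\to L\to E_k\to L(-\vec{\omega}+(k-2)\vx_3)\to 0$ used before, a filtration exhibiting $L'$ as a quotient of each $E_k$, namely
\[
0\to L'(\vec{\omega}-(n-k)\vx_3)\to E_k\to L'\to 0.
\]
I would anchor the induction at $k=n$, where this is just the Auslander--Reiten sequence $0\to\tau L'\to E_n\to L'\to 0$ ending at the leaf $L'\in\cl_{n+1}$ (whose unique predecessor in $\Gamma(\vect\X)$ is the slice element $E_n\in\cl_n$), noting $\tau L'=L'(\vec{\omega})$. Passing from $k+1$ to $k$ by the commutative diagram dual to the pullback diagram of Lemma~\ref{generate from E(i) to E(i+1)} --- the bicartesian ladder whose two rows are the filtrations of $E_k$ and $E_{k+1}$ and whose left vertical map is the inclusion $L'(\vec{\omega}-(n-k)\vx_3)\hookrightarrow L'(\vec{\omega}-(n-k-1)\vx_3)$ --- yields simultaneously the filtration for $E_k$ and the key extension
\[
0\to E_k\to E_{k+1}\to S_{3,k}\to 0,
\]
where $S_{3,k}$ is the simple sheaf at $\vx_3$ defined by $0\to L'(\vec{\omega}-(n-k)\vx_3)\to L'(\vec{\omega}-(n-k-1)\vx_3)\to S_{3,k}\to 0$.

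The second step is to rewrite this simple sheaf in the twists that appear in the statement. Using $\vec{\omega}=-\vx_1+\vx_2-\vx_3$ one has $\vec{\omega}+\vx_3=-(\vx_1-\vx_2)$, so the two line bundles above differ from $L'(-(n-k)\vx_3)$ and $L'(-(n-k+1)\vx_3)$ by the twist $-(\vx_1-\vx_2)$; since a simple sheaf concentrated at $\vx_3$ satisfies $S_{3,k}(\vx_1-\vx_2)=S_{3,k}$, twisting the defining sequence of $S_{3,k}$ by $-(\vx_1-\vx_2)$ produces
\[
0\to L'(-(n-k+1)\vx_3)\to L'(-(n-k)\vx_3)\to S_{3,k}\to 0.
\]
Because $\langle E_k,\,L'(-(n-k+1)\vx_3),\,L'(-(n-k)\vx_3)\rangle$ is closed under the third term of a short exact sequence, this display places $S_{3,k}$ in that subcategory, and then the extension $0\to E_k\to E_{k+1}\to S_{3,k}\to 0$ places $E_{k+1}$ in it as well --- exactly as Lemma~\ref{generate from E(i) to E(i+1)} placed $E_k$ there from the symmetric data.

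I expect the one genuinely delicate point to be the identification of the simple cokernel: one must be sure that the sheaf $E_{k+1}/E_k$ produced by the ladder is literally the \emph{same} simple sheaf as the cokernel of $L'(-(n-k+1)\vx_3)\hookrightarrow L'(-(n-k)\vx_3)$, and this rests precisely on the computation $\vec{\omega}+\vx_3=-(\vx_1-\vx_2)$ together with the $(\vx_1-\vx_2)$-invariance of simples at $\vx_3$. A lesser point to verify is that the downward induction is well founded --- equivalently, that the map $E_k\to L'$ obtained at each stage is surjective --- which follows because the relevant obstruction $\Ext^1(L',S_{3,k})=D\Hom(S_{3,k},\tau L')$ vanishes, there being no nonzero morphism from a finite length sheaf to a line bundle.
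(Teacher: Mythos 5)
Your proposal is correct and is essentially the paper's own argument: the paper disposes of this lemma with the single phrase ``by duality'' from Lemma~\ref{generate from E(i) to E(i+1)}, and what you have written is exactly that dual proof made explicit --- a downward induction anchored at the Auslander--Reiten sequence $0\to\tau L'\to E_n\to L'\to 0$, the ladder of filtrations producing $0\to E_k\to E_{k+1}\to S_{3,k}\to 0$, and the identification of $S_{3,k}$ via $\vec{\omega}+\vx_3=-(\vx_1-\vx_2)$ together with $S_{3,k}(\vx_1-\vx_2)=S_{3,k}$. All your twist computations are consistent with the paper's conventions (note that $\vx_1-\vx_2$ is $2$-torsion in $\mathbb{L}$, so your $\vec{\omega}=-\vx_1+\vx_2-\vx_3$ agrees with the paper's $-\vec{\omega}=-\vx_1+\vx_2+\vx_3$), and your two flagged points --- the identification of the simple cokernel and the surjectivity of $E_k\to L'$ via $\Ext^1(L',S_{3,k})=D\Hom(S_{3,k},\tau L')=0$ --- are handled at the same (or a slightly greater) level of rigor than the original proof being dualized.
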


Now we give the main result in this section.
\begin{theorem}[Classification theorem]\label{classification theorem} Assume $T$ is
a bundle in $\coh\X$ not all consisting of line bundles. Then $T$ is
tilting if and only if there exist $2\leq i\leq j\leq n$ and
$E_k\in\cl_k$ for $i\leq k\leq j$, such that
\begin{equation}\label{form of tilting bundles}
T=T^+(E_i)\oplus(\bigoplus\limits_{i\leq k \leq j}E_k)\oplus
T^-(E_j), \end{equation}
where $T^+(E_i)\in \Br^+(E_i)$, $T^-(E_j)\in
\Br^-(E_j)$ and $\{E_k|i\leq k\leq j\}$ is a sub-slice  from $E_i$
to $E_j$.
\end{theorem}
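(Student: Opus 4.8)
The plan is to prove the two implications separately, with the forward (necessity) direction driven by a counting argument and the converse (sufficiency) by the extension-vanishing machinery already assembled. I work with $n\ge 3$ as in the rest of the subsection, write $l(\cdot)$ for the length of Definition~\ref{definition of orbits}, and use throughout that every indecomposable vector bundle lies in a unique $\tau$-orbit $\cl_k$, that line bundles have length $1$ or $n+1$ while rank-two bundles have length in $\{2,\dots,n\}$, and that a tilting bundle has exactly $n+3$ indecomposable summands (as $T_{\can}$ does, this number being $\rk K_0(\X)$).

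For necessity, suppose $T$ is tilting and not all line bundles, so it has a rank-two summand. By Lemma~\ref{at most one member} distinct rank-two summands lie in distinct orbits, hence have distinct lengths; let $E_i$ and $E_j$ be those of minimal and maximal length, so $2\le i\le j\le n$. Split the summands of $T$ into the length-one line bundles $A$, the length-$(n+1)$ line bundles $B$, and the rank-two bundles $C$. Since $T$ is extension-free, Lemma~\ref{iff condition of extension free} puts every summand in $\Dom(E_i)\cap\Dom(E_j)$; as length $1\le i$ and length $n+1\ge j$ we get $A\subseteq\Dom^+(E_i)$ and $B\subseteq\Dom^-(E_j)$. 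The elements of $A$ are then all twists of $L=\cs(\to E_i)\cap\cl_1$ and those of $B$ all twists of $L'=\cs(E_j\to)\cap\cl_{n+1}$ (by the description of $\Dom^{\pm}$ in the proofs of the bound lemmas), so Lemma~\ref{bound of order I} gives $|A|\le l(E_i)=i$, Lemma~\ref{bound of order I2} gives $|B|\le n-l(E_j)+2=n-j+2$, and $|C|\le j-i+1$ since $C$ meets each of $\cl_i,\dots,\cl_j$ at most once. Adding,
\[
|A|+|B|+|C|\le i+(n-j+2)+(j-i+1)=n+3,
\]
and equality of the total forces equality throughout. Thus $|A|=l(E_i)$ gives $A=:T^+(E_i)\in\Br^+(E_i)$ by~(\ref{when the order of I is maximal}), $|B|=n-l(E_j)+2$ gives $B=:T^-(E_j)\in\Br^-(E_j)$ by~(\ref{when the order of J is maximal}), and $|C|=j-i+1$ gives exactly one summand $E_k\in\cl_k$ for each $i\le k\le j$. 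It then remains to see that $\{E_k\mid i\le k\le j\}$ is a sub-slice, which I would deduce from the pairwise extension-freeness of the $E_k$ together with the maximality of the brackets, reading off the required adjacencies inside $\Gamma(\vect\X)\cong\mathbb{Z}\widetilde{\Delta}$.

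For sufficiency, let $T$ have the form~(\ref{form of tilting bundles}). I first check $\Ext^1(T,T)=0$ summand by summand. The sub-slice $\{E_k\}$ is extension-free as part of a slice, and the members of $\Br^+(E_i)$, $\Br^-(E_j)$ are internally extension-free by Lemma~\ref{exceptional free of line bundle}. For the cross terms, the sub-slice condition gives $E_i\in\Dom^+(E_k)$ and $E_j\in\Dom^-(E_k)$ for each $i\le k\le j$, so by Proposition~\ref{proposition of domain} every $X\in\Br^+(E_i)\subseteq\Dom^+(E_i)\subseteq\Dom^+(E_k)$ and every $Y\in\Br^-(E_j)\subseteq\Dom^-(E_j)\subseteq\Dom^-(E_k)$ lie in $\Dom(E_k)$, whence $E_k\oplus X$ and $E_k\oplus Y$ are extension-free by Lemma~\ref{iff condition of extension free}; and since $X\in\Dom^+(E_i)\subseteq\Dom^+(E_j)$ while $Y\in\Dom^-(E_j)$, Lemma~\ref{extension-free of domains} gives $X\oplus Y$ extension-free. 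The same count as above shows $T$ has $n+3$ pairwise extension-free summands. To finish I would verify generation by repeatedly applying Lemma~\ref{generate from E(i) to E(i+1)} and its dual Lemma~\ref{generate from E(i) to E(i+1)2}: combining the exact sequences~(\ref{exact sequence determine E(k)}) and~(\ref{exact sequence determinate the simple sheaf2}) lets one climb from the bracket $T^+(E_i)$ through the sub-slice, and dually from $T^-(E_j)$, recovering a full canonical-type tilting bundle of line bundles inside $\langle T\rangle$ and hence all of $\coh\X$. (Alternatively, one may invoke the standard fact that in a hereditary category with a tilting object a rigid object with $\rk K_0(\X)=n+3$ indecomposable summands is automatically tilting.)

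The principal difficulty is concentrated in two linked combinatorial points. In the necessity direction, after the counting collapses every bound to an equality one must still upgrade the bare family $\{E_k\mid i\le k\le j\}$---one rank-two bundle per consecutive orbit, pairwise extension-free---to an honest sub-slice in the sense of Definition~\ref{sub-slice}; this uses the fine mesh structure of $\mathbb{Z}\widetilde{\Delta}$ and its compatibility with the extremal brackets, not the extension-free relations alone. In the sufficiency direction the real content is generation: Lemmas~\ref{generate from E(i) to E(i+1)} and~\ref{generate from E(i) to E(i+1)2} presuppose the sub-slice and supply the inductive step, but assembling them into a proof that $\langle T\rangle$ contains a complete generating set of line bundles (including the fork bundles attached to $\vx_1,\vx_2$) is where the bookkeeping is heaviest. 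Everything else---the extension-vanishing and the count---follows mechanically from the lemmas already in hand.
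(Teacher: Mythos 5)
Your necessity argument and the rigidity half of sufficiency are essentially the paper's own proof: the paper decomposes $T=T_1\oplus T_2\oplus T_3$ by length, uses Lemma \ref{iff condition of extension free} to place $T_1$ in $\Dom^+(E_i)$ and $T_3$ in $\Dom^-(E_j)$, combines Lemmas \ref{bound of order I}, \ref{bound of order I2} and \ref{at most one member} into the count $|T_1|+|T_2|+|T_3|\le n+3$, forces equality, and reads off $T_1\in\Br^+(E_i)$, $T_3\in\Br^-(E_j)$ from (\ref{when the order of I is maximal}) and (\ref{when the order of J is maximal}). The sub-slice upgrade you defer is done in the paper purely from extension-freeness of consecutive pairs: $E_k\oplus E_{k+1}$ extension-free gives $E_k\in\Dom^+(E_{k+1})$, which by the mesh structure of $\Gamma(\vect\X)$ forces an irreducible map between $E_k$ and $E_{k+1}$; the ``maximality of the brackets'' you invoke plays no role there. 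Likewise your extension-vanishing checks via Proposition \ref{proposition of domain}, Lemma \ref{iff condition of extension free} and Lemma \ref{extension-free of domains} are exactly the paper's.

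The genuine gap is in your generation step. The paper's argument extends the given sub-slice $\{E_k\mid i\le k\le j\}$ to a full slice by \emph{straight} chains of irreducible injections below $E_i$ and above $E_j$; the direct sum $T'$ of that slice is a tilting bundle, and Lemmas \ref{generate from E(i) to E(i+1)} and \ref{generate from E(i) to E(i+1)2} are applied only on those straight outer parts, climbing \emph{outward}: $E_m\in\langle E_{m+1}, L(m\vx_3), L((m-1)\vx_3)\rangle$ recovers the missing rank-two bundles $E_m$, $m<i$, from $E_i\oplus E_i^u$, and dually for $m>j$, so $T'\in\langle T\rangle$ and $\langle T\rangle=\coh\X$. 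Your plan runs these lemmas in the wrong direction and toward the wrong target: you propose to climb ``through the sub-slice'' (whose members are already summands of $T$, so nothing is gained there) and to recover ``a full canonical-type tilting bundle of line bundles.'' The cited lemmas cannot deliver that object: they produce rank-two bundles out of line bundles plus a neighbouring rank-two bundle, not the reverse; manufacturing $L(k\vx_3)$ for $k\ge i$ from (\ref{exact sequence determine E(k)}) and (\ref{exact sequence determinate the simple sheaf2}) needs the simples $S_{3,k}$ as cokernels of irreducible injections inside the sub-slice, which exist only where the sub-slice happens to be straight (it may zigzag, and indeed Lemma \ref{generate from E(i) to E(i+1)} is only stated for sub-slices contained in $\cs(L\to)$); and no combination of these sequences ever produces the two fork line bundles of type $L(\vx_1)$, $L(\vx_2)$ that any canonical-type line-bundle tilting bundle must contain. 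Your parenthetical fallback---a rigid bundle with $n+3$ indecomposable summands is automatically tilting---would bypass all of this, but that fact is neither proved nor cited in the paper, and the paper's explicit generation argument exists precisely so as not to assume it. Replacing your climb by the paper's (extend to a slice, then generate the outside rank-two bundles from the brackets) closes the hole.
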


\begin{proof}On one hand, we show that each tilting bundle $T$ in
$\coh\X$ not all consisting of line bundles has the form (\ref{form
of tilting bundles}). Let $E_i$ (resp. $E_j$) be the rank two
indecomposable direct summand of $T$ with minimal (resp. maximal)
length. Then by Corollary \ref{at most one member}, $E_i$ (resp.
$E_j$) is uniquely determined. Moreover, $T$ has a decomposition
$$T=T_1\oplus T_2\oplus T_3,$$ where the indecomposable
summands of $T_1$ (resp. $T_3$) are of length 1 (resp. $n+1$), and
$T_2\in\bigoplus\limits_{i\leq k\leq j}\cl_k$. Then by Lemma
\ref{iff condition of extension free}, $T_1\oplus E_i$ is
extension-free implies that $T_1\in\Dom^+(E_i)$, and $T_3\oplus E_j$
is extension-free implies that $T_3\in\Dom^-(E_j)$. Thus by Lemma
\ref{bound of order I} and \ref{bound of order I2}, we have
\begin{equation}\label{inequality1}|T_1|\leq i \text{\quad and\quad}
|T_3|\leq n-j+2,
\end{equation}
where $|T_m|$ denotes the number of pair-wise distinct
indecomposable direct summands of $T_m$. Moreover, Lemma \ref{at
most one member} implies
\begin{equation}\label{inequality2}|T_2|\leq j-i+1.
\end{equation}
It follows that $|T|=\sum\limits_{i=1}^3|T_i|\leq n+3=|T|$. Hence
each inequality in (\ref{inequality1}) and (\ref{inequality2})
should be equality. Thus $T_1\in \Br^+(E_i)$ (resp. $T_3\in
\Br^-(E_j)$) by (\ref{when the order of I is maximal}) (resp.
(\ref{when the order of J is maximal})), and
$T_2=\bigoplus\limits_{i\leq k\leq j}E_k$ with $E_k\in\cl_k$ for any
$i\leq k\leq j$. Furthermore, for $i\leq k<j$, $E_k\oplus E_{k+1}$
is extension-free implies that $E_k\in\Dom^+(E_{k+1})$. By the
structure of $\Gamma(\vect\X)$, there exists an irreducible map
between $E_k$ and $E_{k+1}$. Hence $\{E_k|i\leq k\leq j\}$ is a
sub-slice  from $E_i$ to $E_j$.

On the other hand, we show that if $T$ has the form (\ref{form of
tilting bundles}), then $T$ is a tilting sheaf in $\coh\X$. We only
consider the case
\begin{equation}\label{simple form of T}
T=E_{i}^{u}\oplus(\bigoplus\limits_{i\leq k \leq j}E_k)\oplus
E_{j}^{d},
\end{equation} since the proofs for the other choices of
$T^+(E_i)\in \Br^+(E_i)$ and $T^-(E_j)\in \Br^-(E_j)$ are quite
similar.

Firstly, we claim that $T$ of the form (\ref{simple form of T}) is
extension-free. In fact, since $\{E_k|i\leq k\leq j\}$ is a
sub-slice from $E_i$ to $E_j$, we know that $\bigoplus\limits_{i\leq
k \leq j}E_k$ is a direct summand of a tilting sheaf corresponding
to a slice, hence it is extension-free. Meanwhile, Lemma
\ref{exceptional free of line bundle} implies that both of
$E_{i}^{u}$ and $E_{j}^{d}$ are extension-free. Moreover, notice
that $E_{i}^{u}\in\Dom^+(E_i)$ and
$E_{j}^{d}\in\Dom^-(E_j)\subseteq\Dom^-(E_i)$, hence
$E_{i}^{u}\oplus E_{j}^{d}$ is extension-free by Lemma
\ref{extension-free of domains}. Furthermore, for each $i\leq k \leq
j$, $E_{i}^{u}\in\Dom^+(E_i)\subseteq\Dom^+(E_k)$ implies
$E_{k}\oplus E_{i}^{u}$ is extension-free, and
$E_{j}^{d}\in\Dom^-(E_j)\subseteq\Dom^-(E_k)$ implies $E_{k}\oplus
E_{j}^{d}$ is extension-free, as claimed.

Secondly, we remain to prove that $\coh\X$ is generated by $T$. We
extend the sub-slice $\{E_k|i\leq k\leq j\}$ to a slice as
following,
\[
\xymatrix@-1pc{
  L \ar[dr] &&&&&&&&&& L'\\
 & E_2 \ar[ld] \ar[r] & \cdots \ar[r] &E_i \ar@{-}[r]& \cdots \ar@{-}[r]&E_k \ar@{-}[r]& \cdots \ar@{-}[r]&E_j \ar[r] & \cdots \ar[r]   &E_n \ar[ru] &  \\
  L(\vx_3) &&&&&&&&&& L'(-\vx_3)\ar[lu]  }
\]where $E_m\in\cl_{m}$ for $2\leq m\leq n$, $L=\cs(\to E_i)\cap \cl_1$, $L'=\cs(E_j \to)\cap \cl_{n+1}$,
$E_i - \cdots-E_k -\cdots- E_j$ represents the given sub-slice from
$E_i$ to $E_j$ and each arrow $E_m\to E_{m+1}$ represents an
irreducible injective for $m<i$ and $m\geq j$. Then the direct sum
of all the bundles from the slice forms a tilting bundle $T'$ in
$\coh\X$. By Lemma \ref{generate from E(i) to E(i+1)}, for any
$m<i$, $E_m\in\langle E_{m+1}, L(m\vx_3), L((m-1)\vx_3)\rangle$. It
follows that $\bigoplus\limits_{m=2}^{i-1}E_m\in\langle E_i,
\bigoplus\limits_{a=1}^{i-1}L(a\vx_3) \rangle\subseteq\langle
E_i\oplus E_i^u\rangle$. Similarly, by Lemma \ref{generate from E(i)
to E(i+1)2}, we can get $\bigoplus\limits_{m=j+1}^{n}E_m\in\langle
E_j\oplus E_j^d\rangle$. Hence $\coh\X=\langle T'\rangle\subseteq
\langle T \rangle \subseteq \coh\X$, as claimed.
\end{proof}

\section{The structure of the ``missing part"}

In this section we investigate the ``missing part" from $\coh\X$ to
$\mod\Lambda^{op}$ for any tilting bundle $T$, where
$\Lambda=\End(T)$ is the endomorphism algebra of $T$.

Firstly, we recall the definition of ``missing part" from
\cite{CLR}. Let $T$ be a tilting bundle in $\coh\X$ with
endomorphism algebra $\Lambda$. Then $T$ gives rise to torsion pairs
$(\mathcal {X}_{0}, \mathcal {X}_{1}) $ in $\coh\X$ and $(\mathcal
{Y}_{1}, \mathcal {Y}_{0} )$ in $\mod\Lambda^{op}$ by setting
$$ \mathcal {X}_{i}=\{X\in\coh\X|\Ext^{j}(T, X)=0, j\neq i\}
$$
and
$$\mathcal{Y}_{i}=\{Y\in\mod\Lambda^{op}|\mbox{Tor}_{j}^{\Lambda}(Y, T) =0,j\neq i\}.
$$ Geigle and Lenzing \cite{GL} proved that the functors $$
\Ext^i(T, -): \mathcal {X}_{i} \to \mathcal {Y}_{i} \text{\quad
and\quad} \mbox{Tor}_{i}^{\Lambda} ( -,T): \mathcal {Y}_{i} \to
\mathcal {X}_{i}  $$ define equivalences of categories, inverse to
each other, for $ i=0,1$. In particular, the torsion pair $(\mathcal
{Y}_{1}, \mathcal {Y}_{0} )$ is splitting. Hence it's natural to get
the following definition:

\begin{definition}[\cite{CLR}]\label{definition of missing part }
Let $T$ be a tilting bundle in $\coh\X$ with endomorphism algebra
$\Lambda$. The ``missing part" from $\coh\X$ to $\mod(\Lambda^{op})$
is defined to be the factor category
$$\mathscr{C}=\coh(\mathbb{X})/[\mathcal {X}_{0} \cup \mathcal
{X}_{1} ],$$ where $[\mathcal {X}_{0} \cup \mathcal {X}_{1} ]$
denotes the ideal of all morphisms in $\coh\X$ which factor through
a finite direct sum of coherent sheaves from $\mathcal {X}_{0} \cup
\mathcal {X}_{1}$.
\end{definition}

By definition, the ``missing part" $\crc$ is an additive category
and has the expression
\begin{equation}\label{form of missing part}
\mathscr{C}=\add\{X\in\ind(\coh\X)|\Hom(T, X)\neq 0\neq \Ext^{1}(T,
X)\}.
\end{equation}
Notice that if $T$ is consisting of line bundles, then by Theorem
\ref{tilting line bundle theorem}, $T$ has the form
$$T_{L}=\bigoplus\limits_{0\leq\vx\leq\vc}L(\vx), \text{\quad for\ some\ line\ bundle\ } L.$$
Hence, by \cite{CLR}, the corresponding ``missing part" $\crc_{L}$
has the expression
\begin{equation}\label{form of canonical missing part}
\crc_{L}=\add\{X\in\ind(\coh\X)|\Hom(L, X)\neq 0\neq
\Ext^{1}(L(\vc), X)\},
\end{equation}
which is an abelian category. Therefore, we only consider the case
$T$ not all consisting of line bundles. In particular, for $n=2$,
the indecomposable direct summands of such a tilting bundle form a
slice in $\Gamma(\vect\X)$, whose endomorphism algebra is
hereditary, hence nothing is missing from $\coh\X$ to
$\mod\Lambda^{op}$, that is, $\crc=0$. Thus we deduce to the case
$n\geq 3$. According to Theorem \ref{main theorem}, $T$ has the form
(\ref{form of tilting bundles}). In order to describe the details
more precisely, we only consider $T$ of the form
\begin{equation}\label{form of T}
T=E_i^u\oplus(\bigoplus\limits_{i\leq k \leq j}E_k)\oplus E_j^{d}.
\end{equation}
The arguments for other choices of $T^+(E_i)\in \Br^+(E_i)$ and
$T^-(E_j)\in \Br^-(E_j)$ are quite similar.

The following lemma is crucial in this section, which is useful to
give a more explicit description of $\crc$.
\begin{lemma}\label{crucial lemma for missing part}
Let $E_k$ be an indecomposable rank two summand of $T$. For any
indecomposable vector bundle $X$,
\begin{itemize}
\item[(1)] if $\Ext^1(E_k, X)\neq 0$, then $\Hom(T, X)=0$;
\item[(2)] if $\Ext^1(X, E_k)\neq 0$, then $\Ext^1(T, X)=0$.
\end{itemize}
\end{lemma}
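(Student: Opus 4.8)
The plan is to prove both statements by reducing each to a claim about the indecomposable summands of $T$ and then exploiting that $E_k$ has rank two, which forces nonvanishing of $\Hom$ to propagate along the whole $\tau$-ray through $E_k$. I treat statement (1); statement (2) is entirely dual under Serre duality.

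First I would rewrite the hypothesis using Serre duality (\ref{Serre duality}): the condition $\Ext^1(E_k,X)\neq 0$ is equivalent to $\Hom(X,\tau E_k)\neq 0$. Since $\tau E_k$ again has rank two, Corollary \ref{general homomorphisms related to rank two bundle}(2) applies and yields $\Hom(\tau^m X,\tau E_k)\neq 0$ for all $m\geq 0$, that is, $\Hom(X,\tau^t E_k)\neq 0$ for every $t\leq 1$. Thus $X$ is a ``deep predecessor'' of $E_k$: its slope satisfies $\mu X\leq\mu(\tau E_k)<\mu E_k$ (using $\delta(\vec{\omega})<0$), and $X$ lies strictly outside $\Dom(E_k)$ on the incoming side. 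On the other hand, every indecomposable summand $Y$ of $T$ lies in $\Dom(E_k)$. Indeed, by the classification (\ref{form of tilting bundles}) the summands of $T$ are the line bundles in $E_i^u$, the rank-two bundles $E_m$ with $i\leq m\leq j$, and the line bundles in $E_j^d$; each $E_m$ satisfies $E_m\in\Dom(E_k)$ because $E_m\oplus E_k$ is extension-free (Lemma \ref{iff condition of extension free}), while $E_i^u\subseteq\Dom^+(E_i)\subseteq\Dom^+(E_k)$ and $E_j^d\subseteq\Dom^-(E_j)\subseteq\Dom^-(E_k)$ by Proposition \ref{proposition of domain}.

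Next I would record the complementary vanishing. For any indecomposable summand $Y$ of $T$, extension-freeness of $T$ gives $\Ext^1(E_k,Y)=0$, hence $\Hom(Y,\tau E_k)=0$ by Serre duality; the contrapositive of Corollary \ref{general homomorphisms related to rank two bundle}(2) then propagates this to $\Hom(Y,\tau^t E_k)=0$ for all $t\geq 1$. Now suppose, for a contradiction, that $\Hom(T,X)\neq 0$, so that $\Hom(Y,X)\neq 0$ for some indecomposable summand $Y$. Composing a nonzero map $Y\to X$ with a nonzero map $X\to\tau E_k$ produces an element of $\Hom(Y,\tau E_k)=D\Ext^1(E_k,Y)=0$; the whole point is to arrange that this composite is nonzero, which then yields the desired contradiction and forces $\Hom(T,X)=0$.

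The main obstacle is exactly this nonvanishing of the composite $Y\to X\to\tau E_k$, since a priori a composite of two nonzero maps in a mesh category may vanish. I would resolve it using that $\Gamma(\vect\X)$ is a single standard component of the form $\mathbb{Z}\widetilde{\Delta}$, so that $\ind(\vect\X)$ is its mesh category, together with the abundance established above: the nonvanishing $\Hom(X,\tau^t E_k)\neq 0$ holds for the entire ray $t\leq 1$, while $Y$ sits in $\Dom(E_k)$ as a predecessor of $X$. The tree shape of $\widetilde{\Delta}$ then guarantees a sectional, mesh-relation-free path $Y\rightsquigarrow X\rightsquigarrow\tau E_k$ realizing a nonzero composite. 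For the summands $Y$ that are line bundles one can bypass the mesh combinatorics entirely, since $\Hom$ between line bundles is given explicitly by (\ref{homomorphism between line bundles}) and the admissible positions are pinned down by Lemma \ref{exceptional free of line bundle} and Lemma \ref{bound of order I}. Finally, statement (2) follows by the dual argument, interchanging kernels and cokernels and using Corollary \ref{general homomorphisms related to rank two bundle}(1) together with $\cs(\to E_k)$ in place of $\cs(E_k\to)$.
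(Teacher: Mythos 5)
Your reduction is sound right up to the last step: the use of Serre duality, the propagation of $\Hom$ along $\tau$-orbits via Corollary \ref{general homomorphisms related to rank two bundle}, the observation that every indecomposable summand $Y$ of $T$ lies in $\Dom(E_k)$ (this in fact follows directly from extension-freeness of $T$ and Lemma \ref{iff condition of extension free}, no need for the classification), and the vanishing $\Hom(Y,\tau^{t}E_k)=0$ for all $t\geq 1$ are all correct, and they are exactly the ingredients the paper uses. The genuine gap is the step you yourself flag as the main obstacle: you need the composite of nonzero maps $Y\to X\to \tau E_k$ to be nonzero, and your proposed resolution --- that standardness plus the tree shape of $\widetilde{\Delta}$ ``guarantees a sectional, mesh-relation-free path $Y\rightsquigarrow X\rightsquigarrow\tau E_k$ realizing a nonzero composite'' --- is an assertion, not an argument, and it is false as stated. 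In $\mathbb{Z}\widetilde{\Delta}$ with $\widetilde{\Delta}$ a finite tree, sectional paths have length bounded by the diameter of $\widetilde{\Delta}$, whereas the pairs $(Y,X)$ you must handle can be arbitrarily far apart: e.g.\ $\Hom(L,L(m\vc))\neq 0$ for every $m\geq 0$, and for large $m$ there is no sectional path from $L$ to $L(m\vc)$ at all; a nonzero morphism is in general a combination of long, non-sectional paths modulo mesh relations. Moreover, even when sectional paths $Y\rightsquigarrow X$ and $X\rightsquigarrow\tau E_k$ both exist, their concatenation can fail to be sectional at the joint $X$, after which mesh relations may kill the composite. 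The abundance $\Hom(X,\tau^{t}E_k)\neq 0$ for all $t\leq 1$ does not repair this: it supplies many targets, but the same composition problem recurs for each one. So the contradiction is never actually derived.

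The paper closes this step without composing any morphisms, by bookkeeping positions of $\tau$-orbits relative to slices. From $\Hom(T_i,X)\neq 0$ one chooses $m'\geq 0$ maximal with $\Hom(\tau^{-m'}T_i,X)\neq 0$ (such $m'$ exists by the slope inequality and $\delta(\vec{\omega})<0$), so that $\tau^{-m'}T_i\in\cs(\to X)$; from $\Ext^1(E_k,X)\neq 0$, i.e.\ $\Hom(\tau^{-1}X,E_k)\neq 0$, one similarly obtains $m''\geq 1$ with $\tau^{-m''}X\in\cs(\to E_k)$. Since slice positions compose additively in $\mathbb{Z}\widetilde{\Delta}$, it follows that $\tau^{-m}T_i\in\cs(\to E_k)$ for some $m\geq m'+m''\geq 1$, hence $T_i\notin\Dom(E_k)$, contradicting Lemma \ref{iff condition of extension free} because $T_i\oplus E_k$ is extension-free. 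If you want to salvage your outline, keep everything before the composition step and replace that step by this orbit-position argument; as written, the key implication of the lemma remains unproved.
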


\begin{proof} We only prove the statement (1), since assertion (2)
can be obtained by similar arguments. For contradiction of assertion
(1), we assume there exists an indecomposable summand $T_i$ of $T$
satisfying $\Hom(T_i, X)\neq 0$. Then there exists some $m'\geq 0$,
such that
\[\Hom(\tau^{-m'}T_i, X)\neq 0 \text{\quad and\quad} \Hom(\tau^{-m'-1}T_i, X)=0.
\]
That is, $$\tau^{-m'}T_i\in\cs(\to X).$$ Since $\Ext^1(E_k, X)\neq
0$, by Serre duality, we have $\Hom(\tau^{-1}X, E_k)\neq 0$. Then by
similar arguments, there exists some $m''\geq 1$, such that
$$\tau^{-m''}X\in\cs(\to E_k).$$ It follows that there exists some $m\geq m'+m''\geq
1$, such that $\tau^{-m}T_i\in\cs(\to E_k)$, which implies
$T_i\notin\Dom(E_k)$. Therefore, by Lemma \ref{iff condition of
extension free}, $T_i\oplus E_k$ is not extension-free, a
contradiction. This finishes the proof.
\end{proof}

\begin{corollary}\label{missing part subseteq domain}
For any indecomposable rank two summand  $E_k$ of $T$,
$\ind(\crc)\subseteq\Dom(E_k)$.
\end{corollary}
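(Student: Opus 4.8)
The plan is to read the statement off directly from the explicit description (\ref{form of missing part}) of $\crc$ together with the immediately preceding Lemma \ref{crucial lemma for missing part}. Fix an indecomposable object $X\in\ind(\crc)$; by (\ref{form of missing part}) this means precisely that $\Hom(T, X)\neq 0$ and $\Ext^1(T, X)\neq 0$. The target is $X\in\Dom(E_k)$, and since both $\Dom(E_k)$ and Lemma \ref{iff condition of extension free} are phrased only for indecomposable vector bundles, the first thing I would verify is that $X$ is in fact a vector bundle rather than a finite length sheaf.

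To see this, suppose for contradiction that $X\in\coh_0\X$. The twist $\tau T=T(\vec{\omega})$ is again a vector bundle, so by Serre duality (\ref{Serre duality}) we have $\Ext^1(T, X)=D\Hom(X, \tau T)$. As there are no non-zero morphisms from $\coh_0\X$ to $\vect\X$, this forces $\Hom(X,\tau T)=0$ and hence $\Ext^1(T, X)=0$, contradicting $X\in\ind(\crc)$. Therefore every object of $\ind(\crc)$ is an indecomposable vector bundle, and the cited lemmas apply to it.

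With $X$ now a vector bundle, I would invoke the contrapositives of the two parts of Lemma \ref{crucial lemma for missing part} for the rank two summand $E_k$: from $\Hom(T, X)\neq 0$ and part (1) we obtain $\Ext^1(E_k, X)=0$, while from $\Ext^1(T, X)\neq 0$ and part (2) we obtain $\Ext^1(X, E_k)=0$. Thus $E_k\oplus X$ is extension-free, and Lemma \ref{iff condition of extension free} immediately gives $X\in\Dom(E_k)$. Since $X\in\ind(\crc)$ was arbitrary, this yields $\ind(\crc)\subseteq\Dom(E_k)$.

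I do not anticipate any serious obstacle, as the corollary is essentially a repackaging of Lemma \ref{crucial lemma for missing part}. The only point demanding care is the preliminary reduction to vector bundles, which is what guarantees that $\Dom(E_k)$ is defined for $X$ and that both cited lemmas (each assuming $X\in\vect\X$) are genuinely applicable; this is dispatched by the short Serre-duality argument above.
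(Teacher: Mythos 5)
Your proof is correct and takes essentially the same route as the paper's: read off $\Hom(T,X)\neq 0\neq\Ext^1(T,X)$ from (\ref{form of missing part}), apply the two parts of Lemma \ref{crucial lemma for missing part} (in contrapositive form) to conclude that $E_k\oplus X$ is extension-free, and finish with Lemma \ref{iff condition of extension free}. Your preliminary Serre-duality argument showing that every object of $\ind(\crc)$ is a vector bundle is a point the paper leaves implicit, and it is a sound (and welcome) addition, since both cited lemmas and the definition of $\Dom(E_k)$ apply only to vector bundles.
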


\begin{proof}
For any indecomposable object $X\in\crc$, by (\ref{form of missing
part}), $\Hom(T, X)\neq 0\neq \Ext^1(T, X)$. Then by Lemma
\ref{crucial lemma for missing part}, we have $\Ext^1(E_k, X)=
0=\Ext^1(X, E_k)$. That is, $E_k\oplus X$ is extension-free. Hence
by Lemma \ref{iff condition of extension free}, $X\in\Dom(E_k)$, as
claimed.
\end{proof}

The following lemma shows that the ``missing part" $\crc$ contains
two components.
\begin{lemma}\label{partition lemma} The ``missing part" $\crc$ has
a decomposition
$$\crc=\crc_1\coprod\crc_2,$$
where$$\ind(\crc_1)=\ind(\crc)\cap\Dom^+(E_i) \text{\quad and\quad}
\ind(\crc_2)=\ind(\crc)\cap\Dom^-(E_j).$$
\end{lemma}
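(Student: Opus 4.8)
The plan is to prove that $\crc$ decomposes as a coproduct by showing that its indecomposable objects split into the two stated families and that there are no nonzero morphisms between the families in the factor category. Let me sketch the statement I want to establish: writing $\crc=\crc_1\coprod\crc_2$ means every indecomposable object of $\crc$ lies in exactly one of $\crc_1,\crc_2$, and every morphism in $\crc$ between an object of $\crc_1$ and an object of $\crc_2$ (in either direction) is zero.

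First I would pin down the object-level partition. By Corollary \ref{missing part subseteq domain} applied to both $E_i$ and $E_j$, every indecomposable $X\in\crc$ satisfies $X\in\Dom(E_i)\cap\Dom(E_j)$. The natural invariant to separate the two pieces is the length function $l$ of Definition \ref{definition of orbits}: I would show that any indecomposable $X\in\crc$ has either $l(X)\le l(E_i)$ or $l(X)\ge l(E_j)$, so that it lands in $\Dom^+(E_i)$ or $\Dom^-(E_j)$ respectively. The point is that $i\le j$, so $\Dom^+(E_i)$ collects the objects of length at most $l(E_i)=i$ and $\Dom^-(E_j)$ those of length at least $l(E_j)=j$; since $X\in\Dom(E_i)\cap\Dom(E_j)$, its length is constrained, and the only way an indecomposable of intermediate length $i<l(X)<j$ could appear is if it were one of the $E_k$ themselves, but those summands of $T$ satisfy $\Hom(T,E_k)\ne0=\Ext^1(T,E_k)$ and so are \emph{not} in $\crc$ by (\ref{form of missing part}). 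Thus $\ind(\crc)=\big(\ind(\crc)\cap\Dom^+(E_i)\big)\cup\big(\ind(\crc)\cap\Dom^-(E_j)\big)$, and I would check the union is disjoint (an object in both would have $l(E_j)\le l(X)\le l(E_i)$, forcing it to be an excluded summand). This gives the object decomposition $\ind(\crc)=\ind(\crc_1)\sqcup\ind(\crc_2)$.

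The substantive step is showing the two parts are \emph{orthogonal in the factor category}, i.e.\ that any morphism $f\colon X\to Y$ in $\coh\X$ with $X\in\crc_1$, $Y\in\crc_2$ (and symmetrically $Y\to X$) factors through $\mathcal{X}_0\cup\mathcal{X}_1$, hence is zero in $\crc$. The key tool is that $X$ has length $\le i$ while $Y$ has length $\ge j$, with $i<j$ (if $i=j$ one of the two parts behaves degenerately and should be handled separately). I would use the sub-slice $\{E_k\mid i\le k\le j\}$ together with Lemma \ref{generate from E(i) to E(i+1)} and its dual to express objects of low length via $E_i\oplus E_i^u$ and objects of high length via $E_j\oplus E_j^d$; since the rank-two summands $E_k$ all lie in $T$, any morphism between a low-length and a high-length bundle can be routed through the exact sequences (\ref{exact sequence determine E(k)}) and their shifts, forcing the composite to pass through a sheaf on which $\Ext^1(T,-)=0$ or $\Hom(T,-)=0$, that is, through an object of $\mathcal{X}_0\cup\mathcal{X}_1$. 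Concretely, a nonzero $f\colon X\to Y$ would, via Serre duality and the slice geometry, produce a nonzero $\Hom$ or $\Ext^1$ compatibility with $E_k$ that contradicts $X,Y\in\Dom(E_k)$ for every intermediate $k$; this is exactly the obstruction that the factoring kills.

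The hard part will be this orthogonality argument, specifically verifying that the relevant morphisms genuinely factor through $\mathcal{X}_0\cup\mathcal{X}_1$ rather than merely vanishing for slope reasons. I expect the slope inequality $\mu X\le\mu Y$ (from the length ordering and the Auslander--Reiten structure of $\Gamma(\vect\X)$) to make one of the two directions automatic, so the real work is the other direction, where I must produce an explicit factorization. I would handle this by taking the image factorization of $f$ in the hereditary category $\coh\X$ and analyzing where the image sheaf sits: the generation statements of Lemma \ref{generate from E(i) to E(i+1)} and Lemma \ref{generate from E(i) to E(i+1)2} guarantee that any such connecting map decomposes through the simple sheaves $S_{3,k}$, which are finite-length objects annihilated appropriately by $T$, hence belong to $\mathcal{X}_0\cup\mathcal{X}_1$. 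Once both the object partition and the morphism orthogonality are in place, the coproduct decomposition $\crc=\crc_1\coprod\crc_2$ with the stated indecomposables follows immediately from the definition of a coproduct of additive categories.
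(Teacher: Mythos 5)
Your object-level partition is correct and is essentially the paper's own argument: by Corollary \ref{missing part subseteq domain} every indecomposable of $\crc$ lies in $\Dom(E_k)$ for each rank-two summand $E_k$ of $T$; an indecomposable of length $k$ with $i\leq k\leq j$ is some $\tau^m E_k$, which for $m\neq 0$ is not in $\Dom(E_k)$ (Lemma \ref{at most one member} and Lemma \ref{iff condition of extension free}) and for $m=0$ is excluded because $\Ext^1(T,E_k)=0$; hence every indecomposable of $\crc$ has length $<i$ or $>j$ and lands in $\Dom^+(E_i)$, respectively $\Dom^-(E_j)$. Up to rewording, this is the first and last part of the paper's proof.

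The genuine gap is the morphism orthogonality, which you rightly single out as the substantive step but then argue by a mechanism that cannot work. First, a nonzero morphism between vector bundles can never factor through the simple sheaves $S_{3,k}$, nor through any finite-length sheaf: by the decomposition $\coh\X=\vect\X\bigvee\coh_0\X$ there are no nonzero morphisms from $\coh_0\X$ to $\vect\X$, so any composite $X\to S_{3,k}\to Y$ with $X,Y$ bundles is already zero in $\coh\X$. So "decomposing through the $S_{3,k}$" cannot be what kills connecting maps in the factor category; the generation statements of Lemma \ref{generate from E(i) to E(i+1)} concern building \emph{objects} by extensions and give no factorization of \emph{morphisms}. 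Second, the proposed "contradiction with $X,Y\in\Dom(E_k)$" arising from a nonzero $f\colon X\to Y$ is never actually derived: membership in $\Dom(E_k)$ constrains morphisms to and from the $\tau$-orbit of $E_k$, not morphisms between $X$ and $Y$. What actually yields the coproduct --- and what the paper's terse ``Thus $\crc$ has a decomposition'' silently uses --- is the standardness of $\Gamma(\vect\X)$: $\ind(\vect\X)$ is equivalent to the mesh category of $\mathbb{Z}\widetilde{\Delta}$, so every morphism between indecomposable bundles is a sum of compositions of irreducible maps, and every path between a vertex of length $<i$ and a vertex of length $>j$ (in either direction) must cross the rows $i,\dots,j$, hence factors through some $\tau^m E_k$; by the claim above these all lie in $\mathcal{X}_0\cup\mathcal{X}_1$, so all such morphisms vanish in $\crc$. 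Your proposal never invokes standardness or any substitute for it, so as written the decomposition $\crc=\crc_1\coprod\crc_2$ is not established. (Two side remarks are also inaccurate but inessential: the case $i=j$ is not degenerate --- only $i=2$ or $j=n$ makes a factor vanish --- and the slope ordering does not make either direction of orthogonality automatic.)
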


\begin{proof}Firstly, we claim that
\begin{equation}\label{claim in the partition}
\tau^m E_k\notin\crc, \text{\quad for\ any\ } m\in \mathbb{Z}\text{\
and\ } i\leq k \leq j.
\end{equation}
In fact, for $i\leq k \leq j$, if $m=0$, then $\Ext^1(T, E_k)=0$
implies that $E_k\notin\crc$; if $m\neq 0$, then by Corollary
\ref{general homomorphisms related to rank two bundle}, $\tau^m
E_k\notin\Dom(E_k)$. Combining with Lemma \ref{missing part subseteq
domain}, we get $\tau^m E_k\notin\crc$, as claimed. Thus $\crc$ has
a decomposition
$$\crc=\crc_1\coprod\crc_2,$$
where $$ \crc_1=\add\{X\in\ind(\crc)|l(X)<i\}\text{\quad and\quad}
\crc_2=\add\{X\in\ind(\crc)|l(X)>j\}.$$ Moreover, (\ref{claim in the
partition}) implies $E_i\notin\crc$, hence for any object
$X\in\ind(\crc)\cap\Dom^+(E_i)$, we have $l(X)<i$. It follows that
$\ind(\crc)\cap\Dom^+(E_i)\subseteq \ind(\crc_1)$. On the other
hand, for any indecomposable object $X\in \crc_1$, by Corollary
\ref{missing part subseteq domain}, we have $X\in \Dom^+(E_i)$. It
follows that $\ind(\crc_1)\subseteq\ind(\crc)\cap\Dom^+(E_i)$. Hence
$\ind(\crc_1)=\ind(\crc)\cap\Dom^+(E_i)$. Similarly, we have
$\ind(\crc_2)=\ind(\crc)\cap\Dom^-(E_j)$. This finishes the proof.
\end{proof}

We are now going to describe the categories $\crc_1$ and $\crc_2$
more precisely. By symmetric of $\crc_1$ and $\crc_2$, we only show
the results for $\crc_1$ in the following. One should keep in mind
that all the statements also hold for $\crc_2$ by using the duality
of $\cl_1$ (resp. $\cl'_1$) and $\cl_{n+1}$ (resp. $\cl'_{n+1}$).
The following lemma gives an explicit description of $\crc_1$,
comparing with (\ref{form of canonical missing part}).
\begin{lemma}\label{form of up missing part}Assume $\cs(\to
E_i)\cap\cl_1=L$, then
\begin{equation}\label{formular of up missing part }
\crc_1=\add\{X\in\ind(\coh\X)|\Hom(L, X)\neq 0\neq
\Ext^1(L((i-1)\vx_3), X)\}.
\end{equation}
\end{lemma}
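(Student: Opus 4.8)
The statement to prove is Lemma 4.7 (labeled \texttt{form of up missing part}): under the hypothesis $\cs(\to E_i)\cap\cl_1=L$, the subcategory $\crc_1$ equals
\[
\add\{X\in\ind(\coh\X)\mid \Hom(L,X)\neq 0\neq \Ext^1(L((i-1)\vx_3),X)\}.
\]
The phrase "comparing with (\ref{form of canonical missing part})" is the hint: the right-hand side is exactly the shape of the canonical missing part $\crc_L$ associated to the line-bundle tilting object $\bigoplus_{0\le\vx\le\vc}L(\vx)$, but truncated so that the second line bundle $L((i-1)\vx_3)$ plays the role of $L(\vc)$. So the whole content is to show that, on the indecomposables that survive in $\crc_1$, membership in $\crc=\add\{X\mid \Hom(T,X)\neq 0\neq\Ext^1(T,X)\}$ is detected by just these two functors $\Hom(L,-)$ and $\Ext^1(L((i-1)\vx_3),-)$, rather than by all summands of $T$.

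**Plan.** By Lemma~\ref{partition lemma}, $\ind(\crc_1)=\ind(\crc)\cap\Dom^+(E_i)$, and every $X\in\ind(\crc_1)$ satisfies $l(X)<i$, i.e. $X$ is a line bundle lying "above" $E_i$ in the domain picture. So I reduce everything to line bundles $X$ in $\Dom^+(E_i)$, where by the structure of $\Gamma(\vect\X)$ (as in Lemma~\ref{bound of order I}) each such $X$ has the explicit form $L(k\vx_3)$ or $L(k\vx_3+\vx_1-\vx_2)$ for $0\le k\le i-2$. The plan is to prove the two set inclusions by showing, separately, that the condition "$\Hom(T,X)\neq 0\neq\Ext^1(T,X)$" is equivalent to "$\Hom(L,X)\neq 0\neq\Ext^1(L((i-1)\vx_3),X)$" on these $X$.

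\textbf{Forward inclusion.} First I would take $X\in\ind(\crc_1)$ and show it satisfies the two displayed conditions. The key is that $E_i^u=\bigoplus_{0\le k\le i-1}L(k\vx_3)$ is a summand of $T$ with $\cs(\to E_i)\cap\cl_1=L$ as its bottom line bundle. Since $\Hom(T,X)\neq 0$ and, by Corollary~\ref{general homomorphisms related to rank two bundle} together with the slice structure, homomorphisms from the various summands of $T$ into an $X$ of length $<i$ are controlled by the bottom line bundle $L$, I expect $\Hom(L,X)\neq 0$ to follow from $X\in\cs(\to E_i)$-type reasoning (i.e. $L$ is the minimal-slope line bundle mapping into the relevant region). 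Dually, $\Ext^1(T,X)\neq 0$ must be channeled through the single summand $L((i-1)\vx_3)$, the top line bundle of $E_i^u$: using Serre duality (\ref{Serre duality}) and the extension formula (\ref{formula of extension between line bundles}), I would show that among all summands of $T$ the one whose $\Ext^1$ into $X$ can be nonzero, for $X$ above $E_i$, is precisely $L((i-1)\vx_3)$, giving $\Ext^1(L((i-1)\vx_3),X)\neq 0$.

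\textbf{Reverse inclusion.} Conversely, given a line bundle $X$ with $\Hom(L,X)\neq 0\neq\Ext^1(L((i-1)\vx_3),X)$, I would first use the explicit extension formula to pin down that $X$ lies in $\Dom^+(E_i)$ (the two vanishing/nonvanishing conditions force $X$ into the finite band $L(k\vx_3)$, $0\le k\le i-2$, and its $\vx_1-\vx_2$-twists). Then I must upgrade the two single-object conditions to the full conditions $\Hom(T,X)\neq 0$ and $\Ext^1(T,X)\neq 0$. For $\Hom(T,X)\neq 0$ this is immediate since $L$ is a summand of $E_i^u\subseteq T$. For $\Ext^1(T,X)\neq 0$ it suffices that $\Ext^1(L((i-1)\vx_3),X)\neq 0$ with $L((i-1)\vx_3)$ a summand of $T$. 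The remaining burden is to verify $X\in\crc$ genuinely, i.e. that $X\notin\mathcal X_0\cup\mathcal X_1$; but $\Hom(T,X)\neq 0\neq\Ext^1(T,X)$ is exactly the defining condition (\ref{form of missing part}) for $\ind(\crc)$, and $X\in\Dom^+(E_i)$ with $l(X)<i$ places it in $\crc_1$ by Lemma~\ref{partition lemma}.

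\textbf{Main obstacle.} The delicate step is showing that among all summands of $T=E_i^u\oplus(\bigoplus_{i\le k\le j}E_k)\oplus E_j^d$ the \emph{only} relevant detectors for $X$ of length $<i$ are the two extremal line bundles $L$ and $L((i-1)\vx_3)$ of $E_i^u$ — that the rank-two summands $E_k$ and the lower block $E_j^d$ contribute nothing new to either $\Hom(T,X)$ or $\Ext^1(T,X)$. I would handle this using Corollary~\ref{general homomorphisms related to rank two bundle} (to propagate $\Hom$ along $\tau$-orbits of rank-two bundles) and the slope inequality $\Hom(Y,Y')\neq 0\Rightarrow \mu Y\le\mu Y'$ for indecomposable vector bundles, combined with Corollary~\ref{missing part subseteq domain} ($\ind(\crc)\subseteq\Dom(E_k)$ for every rank-two summand $E_k$), which already excludes the dangerous extensions. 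The bookkeeping over the explicit list of line bundles $L(k\vx_3)$, $L(k\vx_3+\vx_1-\vx_2)$ via the formula (\ref{formula of extension between line bundles}) is routine once this channeling is established.
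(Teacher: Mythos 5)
Your proposal rests on a false structural premise, and the error is fatal. You claim that every $X\in\ind(\crc_1)$ has $l(X)<i$ and is therefore ``a line bundle'' of the form $L(k\vx_3)$ or $L(k\vx_3+\vx_1-\vx_2)$, and you then reduce both inclusions to bookkeeping over this list. But $l(X)<i$ does not mean $X$ is a line bundle: the lengths $2,\dots,i-1$ correspond to the $\tau$-orbits $\cl_2,\dots,\cl_{i-1}$ of \emph{rank-two} bundles, and these are exactly the objects that populate $\crc_1$. In fact the paper proves (Lemma \ref{c1 does not contain line bundles}) that $\crc_1$ contains \emph{no} line bundle at all: for a line bundle $L(\vx)$, the conditions $\Hom(L,L(\vx))\neq 0$ and $\Ext^1(L((i-1)\vx_3),L(\vx))\neq 0$ force $\vx\geq 0$ and $\vx\leq\vec{\omega}+(i-1)\vx_3\leq\vec{\omega}+\vc$, which is impossible by (\ref{two possibilities of x}). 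So on the class of objects you restrict to, both sides of the asserted equality are empty; your reverse inclusion (``given a line bundle $X$ with \dots'') is vacuous, and your forward inclusion never meets the objects that carry the actual content of the lemma. Note also that the line bundles $L(k\vx_3)$, $0\le k\le i-1$, are direct summands of $T$ itself, hence lie in $\mathcal{X}_0\cup\mathcal{X}_1$ and are killed in the factor category --- they could never appear in $\crc$.

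Beyond this, your sketch does not supply the mechanism that the correct proof needs. The inclusion $\supseteq$ is the easy one: since $L$ and $L((i-1)\vx_3)$ are summands of $T$, the two nonvanishing conditions place $X$ in $\crc$, and $\Ext^1(L((i-1)\vx_3),X)\neq 0$ rules out $X\in\crc_2$ by Lemma \ref{extension-free of domains}, so $X\in\crc_1$. The hard inclusion $\subseteq$ must show, for a rank-two $X\in\crc_1$, that nonvanishing of $\Hom(T,-)$ and $\Ext^1(T,-)$ at $X$ is detected by $E_i^u$ alone: one kills $\Ext^1(E_k,X)$ and $\Ext^1(E_j^d,X)$ by the domain lemmas, and channels $\Ext^1(E_i^u,X)\neq 0$ up to $L((i-1)\vx_3)$ through the inclusion $L(m\vx_3)\hookrightarrow L((i-1)\vx_3)$; then one needs the genuinely nontrivial claim $\Hom(E_i,X)=0$ (proved by a slice argument that would otherwise contradict $\Ext^1(L((i-1)\vx_3),X)\neq 0$), which yields $\tau^{-1}X\in\Dom^+(E_i)$ and, via Serre duality, kills $\Hom(E_k,X)$ and $\Hom(E_j^d,X)$, so that $\Hom(T,X)\neq 0$ is channeled through $E_i^u$ and pushed down to $\Hom(L,X)\neq 0$ using $0\to L\to L(m\vx_3)\to S\to 0$ with $S$ of finite length. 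Your ``main obstacle'' paragraph correctly senses that this channeling is the crux, but offers no argument for it beyond citing Corollary \ref{general homomorphisms related to rank two bundle} and slope inequalities, and in any case applies it to the wrong class of objects.
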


\begin{proof}For any indecomposable sheaf $X$, if $\Hom(L, X)\neq 0\neq
\Ext^1(L((i-1)\vx_3), X)$, then $X\in \crc$ since both of $L$ and
$L((i-1)\vx_3)$ are direct summands of $T$. Moreover, by Lemma
\ref{extension-free of domains}, $\Ext^1(L((i-1)\vx_3), X)\neq 0$
implies $X\notin \crc_2$, hence $X\in \crc_1$.

On the other hand, for any indecomposable object $X\in \crc_1$, we
have $X\in\Dom^+(E_k)$ for $i\leq k\leq j$. So by Lemma \ref{iff
condition of extension free}, $\Ext^1(E_k, X)=0$. Moreover, by Lemma
\ref{extension-free of domains}, $\Ext^1(E_j^d, X)=0$. Hence
$\Ext^1(T, X)\neq 0$ implies $\Ext^1(E_i^u, X)\neq 0$. Then there
exists some $0\leq m\leq i-1$, such that $\Ext^1(L(m\vx_3), X)\neq
0$. By applying $\Hom(-, X)$ to the injective morphism
$L(m\vx_3)\hookrightarrow L((i-1)\vx_3)$, we get
\begin{equation}\label{claim in c1}
\Ext^1(L((i-1)\vx_3), X)\neq 0.
\end{equation}
Next, we claim that $\Hom(E_i, X)=0$. Otherwise, we have
$X\in\cs(E_i\to)$. Notice that $L((i-1)\vx_3)\in\cs(E_i\to)$, hence
$X$ and $L((i-1)\vx_3)$ belong to the same slice. It follows that
$X\oplus L((i-1)\vx_3)$ is extension-free, contradicting to
(\ref{claim in c1}), as claimed. So by definition, we get
$\tau^{-1}X\in\Dom^+(E_i)\subseteq\Dom^+(E_j)$. Then by Serre
duality and Lemma \ref{extension-free of domains},
\[\Hom(E_j^d, X)=D\Ext^1(\tau^{-1}X, E_j^d)=0.
\]
Moreover, for any $i\leq k\leq j$,
$\tau^{-1}X\in\Dom^+(E_i)\subseteq\Dom^+(E_k)$ implies that
$\Ext^1(\tau^{-1}X, E_k)=0$. It follows that $\Hom(E_k, X)=0$. Hence
$\Hom(T, X)\neq 0$ implies $\Hom(E_i^u, X)\neq 0$. Then there exists
some $0\leq m\leq i-1$, such that
\begin{equation}\label{non-zero homomorphism of mx3} \Hom(L(m\vx_3),
X)\neq 0.\end{equation} By applying $\Hom(-, X)$ to the exact
sequence $$0\to L\to L(m\vx_3)\to S\to 0,$$ where $S$ is a coherent
sheaf of finite length, we get an exact sequence $$0\to\Hom(S,
X)\to\Hom(L(m\vx_3), X)\to\Hom(L, X).$$ Since $\Hom(S, X)=0$,
(\ref{non-zero homomorphism of mx3}) induces that
\[
\Hom(L, X)\neq 0.
\]
This finishes the proof.
\end{proof}

\begin{lemma}\label{c1 does not contain line bundles}The category $\crc_1$ doesn't contain any
line bundle.
\end{lemma}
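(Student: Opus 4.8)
The plan is to show that no line bundle can appear as an indecomposable object of $\crc_1$ by using the explicit description from Lemma \ref{form of up missing part}. By that lemma, with $L=\cs(\to E_i)\cap\cl_1$, an indecomposable $X$ lies in $\crc_1$ precisely when $\Hom(L,X)\neq 0\neq\Ext^1(L((i-1)\vx_3),X)$. So I would suppose for contradiction that $X$ is a line bundle in $\ind(\crc_1)$. After a grading shift I may write $X=L(\vy)$ for some $\vy\in\mathbb{L}$, and then translate the two nonvanishing conditions into conditions on $\vy$ via the formulas \eqref{homomorphism between line bundles} and \eqref{formula of extension between line bundles} together with the dichotomy \eqref{two possibilities of x}.

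Concretely, $\Hom(L,L(\vy))=S_{\vy}\neq 0$ forces $\vy\geq 0$, while by Serre duality $\Ext^1(L((i-1)\vx_3),L(\vy))=DS_{\vec{\omega}+(i-1)\vx_3-\vy}\neq 0$ forces $\vec{\omega}+(i-1)\vx_3-\vy\geq 0$, i.e.\ $\vy\leq\vec{\omega}+(i-1)\vx_3$. The first step is thus to combine these into the constraint
\[
0\leq\vy\leq\vec{\omega}+(i-1)\vx_3.
\]
Recalling $\vec{\omega}=\vc-\vx_1-\vx_2-\vx_3=-\vx_1-\vx_2+(n-1)\vx_3$ (using $n\vx_3=\vc$), the upper bound becomes $\vy\leq -\vx_1-\vx_2+(i+n-2)\vx_3$. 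The key point I expect to exploit is that since $i\leq n$, we have $i-1\leq n-1$, so the "width" $(i-1)$ of the interval is strictly smaller than $n$, and I want to argue that no element $\vy$ in normal form can simultaneously satisfy $\vy\geq 0$ and this upper bound. Writing $\vy$ in normal form $\vy=\sum l_k\vx_k+l\vc$ and comparing the $\vx_1,\vx_2$-coefficients forced by the two inequalities should yield an outright contradiction, since the upper bound carries the $-\vx_1-\vx_2$ term that cannot be reconciled with nonnegativity for such a short shift.

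The main obstacle will be handling the ordering of $\mathbb{L}$ carefully: the relation $\leq$ is defined via normal form and the $\vc$-coefficient $l$, so I must reduce both bounds to normal form before comparing, and I should be attentive to the boundary case $i=n$ (where the interval is widest) to make sure the contradiction survives. I expect the cleanest route is to observe that the two conditions place $\vy$ in an interval of $\mathbb{L}$ that, after subtracting, would force $\vec{\omega}+(i-1)\vx_3\geq 0$, contradicting the strict inequality $(i-1)\vx_3<\vc$ hidden in $\vec{\omega}+\vc=\vec{\omega}+n\vx_3$; equivalently, the argument is essentially the same emptiness computation used in the ``missing part'' description of \cite{CLR}, now localized to the subinterval of length $i-1<n$. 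Once the contradiction is reached, $\crc_1$ contains no line bundle, and by the stated symmetry between $\crc_1$ and $\crc_2$ the same holds for $\crc_2$, completing the proof.
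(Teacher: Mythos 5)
Your proposal is correct and follows essentially the same route as the paper's own proof: invoke Lemma \ref{form of up missing part}, translate the two nonvanishing conditions via (\ref{homomorphism between line bundles}) and (\ref{formula of extension between line bundles}) into $0\leq\vy\leq\vec{\omega}+(i-1)\vx_3$, and derive a contradiction from the structure of $\mathbb{L}$. The paper just concludes slightly more crisply than your normal-form analysis: since $(i-1)\vx_3\leq\vc$, the constraint gives $\vy\geq 0$ and $\vy\leq\vec{\omega}+\vc$ simultaneously, directly contradicting the dichotomy (\ref{two possibilities of x}).
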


\begin{proof}
For contradiction we assume $L(\vx)\in\crc_1$ for some $\vx\in
\mathbb{L}$. Then by (\ref{formular of up missing part }), $$\Hom(L,
L(\vx))\neq 0 \text{\quad and\quad} \Ext^1(L((i-1)\vx_3),
L(\vx))\neq 0.$$ It follows from (\ref{homomorphism between line
bundles}) and (\ref{formula of extension between line bundles}) that
 $$\vx\geq 0\text{\quad and\quad}\vx\leq
\vec{\omega}+(i-1)\vx_3\leq \vec{\omega}+\vc,$$ which is a
contradiction to (\ref{two possibilities of x}), this finishes the
proof.
\end{proof}

Denote by $\mathscr{L}$ the set of all line bundles in $\coh\X$ and
$\ul{\vect}\X=\vect\X/[\mathscr{L}]$ the stable category of vector
bundles obtained by factoring out all line bundles. Then Lemma
\ref{c1 does not contain line bundles} shows that the ``missing
part" $\crc_{1}$ is a subcategory of $\ul{\vect}\X$. For
simplification, we denote $\Hom_{\ul{\vect}\X}(-, -)$ by
$\ul{\Hom}(-, -)$.

\begin{lemma}\label{factor property}
Let $X, Y, Z$ be indecomposable vector bundles satisfying
$$\ul{\Hom}(X, Z)\neq 0\text{\quad and\quad}\ul{\Hom}(Z, Y)\neq 0.$$
Then $X, Y\in\crc_1$ imply $Z\in\crc_1$.
\end{lemma}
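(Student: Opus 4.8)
The plan is to verify the two defining conditions of $\crc_1$ furnished by Lemma \ref{form of up missing part}: writing $L=\cs(\to E_i)\cap\cl_1$, it suffices to show that $\Hom(L,Z)\neq 0$ and $\Ext^1(L((i-1)\vx_3),Z)\neq 0$, since $Z$ is an indecomposable vector bundle and (\ref{formular of up missing part}) then forces $Z\in\crc_1$. These two conditions are interchanged by Serre duality, and I expect to obtain the first from the pair $(X,\ \ul{\Hom}(X,Z)\neq 0)$ and the second from the pair $(Y,\ \ul{\Hom}(Z,Y)\neq 0)$. Throughout I will use that $X$ and $Y$ are rank-two bundles: they lie in $\crc_1$, which by Lemma \ref{c1 does not contain line bundles} contains no line bundle, while every indecomposable vector bundle on $\X$ has rank one or two.

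For $\Hom(L,Z)\neq 0$, I would first record the elementary fact that any nonzero morphism from a line bundle to a vector bundle is injective, since a non-injective one would have rank-zero, i.e.\ torsion, image sitting inside a torsion-free bundle. As $X\in\crc_1$ gives $\Hom(L,X)\neq 0$, fix an inclusion $g\colon L\hookrightarrow X$; and since $\ul{\Hom}(X,Z)\neq 0$, choose $f\colon X\to Z$ that does not factor through any direct sum of line bundles. Suppose, for contradiction, that $\Hom(L,Z)=0$. Then $fg=0$, so $f$ factors through $C=X/L$, a sheaf of rank $\rk X-1=1$. Because $Z$ is torsion-free, the induced map $C\to Z$ annihilates the finite-length torsion subsheaf $t(C)$ and hence factors through $\bar C=C/t(C)$, which is torsion-free of rank one, that is, a line bundle. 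Thus $f$ factors through a line bundle, contrary to the choice of $f$; therefore $\Hom(L,Z)\neq 0$.

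The condition $\Ext^1(L((i-1)\vx_3),Z)\neq 0$ I would treat by the dual argument. By Serre duality (\ref{Serre duality}) it is equivalent to $\Hom(Z,\tau L((i-1)\vx_3))\neq 0$, a morphism into a line bundle. From $Y\in\crc_1$, hence $\Ext^1(L((i-1)\vx_3),Y)\neq 0$, Serre duality produces a nonzero $h\colon Y\to\tau L((i-1)\vx_3)$; and since $\ul{\Hom}(Z,Y)\neq 0$, choose $f\colon Z\to Y$ not factoring through any direct sum of line bundles. If $\Hom(Z,\tau L((i-1)\vx_3))=0$ then $hf=0$, so $f$ factors through $\ker h$. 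As $h$ is nonzero, its image is a rank-one subsheaf of a line bundle, whence $\ker h$ is a rank-one subsheaf of the torsion-free bundle $Y$, i.e.\ a line bundle. Then $f$ factors through a line bundle, contrary to the choice of $f$, and so $\Ext^1(L((i-1)\vx_3),Z)\neq 0$.

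Combining the two conditions with (\ref{formular of up missing part}) yields $Z\in\crc_1$. The point I would write out most carefully---and the only place the geometry really enters---is the rank bookkeeping in the two factorizations: knowing that $X$ and $Y$ have rank exactly two is what makes the cokernel $\bar C$ and the kernel $\ker h$ have rank exactly one, and the identification of a rank-one torsion-free sheaf on $\X$ with a line bundle is what lets the contradiction with the choice of $f$ go through. All remaining steps are instances of the torsion/torsion-free dichotomy available in the hereditary category $\coh\X$.
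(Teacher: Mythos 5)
Your proof is correct and takes essentially the same route as the paper: you verify the two conditions in Lemma \ref{form of up missing part} for $Z$, using that a morphism between rank-two bundles which is nonzero in $\ul{\vect}\X$ cannot die under the relevant composition, since rank bookkeeping plus torsion-freeness would then force a factorization through a line bundle. The paper packages this by first proving the stable-nonzero morphisms are injective and then composing (saying ``dually'' for the $\Ext^1$ condition), while you argue the contrapositive through the cokernel $X/L$ and the kernel of $Y\to\tau L((i-1)\vx_3)$; the mathematical content is the same.
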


\begin{proof}Since $\ul{\Hom}(X, Z)\neq 0$, there exists a non-zero
morphism $\phi: X\to Z$ in $\coh\X$, which can not factor through
line bundles. It follows that $\Im \phi=X$, that is, $\phi$ is
injective. Since $X\in\crc_1$, we have $\Hom(L, X)\neq 0$. Applying
$\Hom(L, -)$ to $\phi$, it follows that $$\Hom(L, Z)\neq 0.$$
Dually, $\ul{\Hom}(Z, Y)\neq 0$ and $Y\in\crc_1$ imply that
$$\Ext^1(L((i-1)\vx_3), Z)\neq 0.$$
Thus by (\ref{formular of up missing part }), $Z\in\crc_1$.
\end{proof}

For any two vector bundles $X, Y$, denote by $[\mathscr{L}](X, Y)$
the ideal of all morphisms from $X$ to $Y$ in $\coh\X$ which factor
through a finite direct sum of line bundles. The following
proposition indicates that $\crc_{1}$ is actually a full subcategory
of $\ul{\vect}\X$.

\begin{proposition}\label{homomorphism in the factor
category} For any two vector bundles $X, Y\in\crc_1$, we have
$$\crc_1(X, Y)=\Hom(X,
Y)/[\mathscr{L}](X, Y).$$
\end{proposition}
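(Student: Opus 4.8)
The statement asks us to prove that for $X, Y \in \crc_1$, the morphism space in the factor category $\crc_1$ agrees with $\Hom(X,Y)$ modulo morphisms factoring through line bundles, i.e. $\crc_1(X,Y) = \Hom(X,Y)/[\mathscr{L}](X,Y)$. By definition $\crc = \coh\X/[\mathcal{X}_0 \cup \mathcal{X}_1]$, so a priori $\crc_1(X,Y) = \Hom(X,Y)/[\mathcal{X}_0 \cup \mathcal{X}_1](X,Y)$, where $[\mathcal{X}_0 \cup \mathcal{X}_1](X,Y)$ is the space of morphisms factoring through a finite direct sum of sheaves from $\mathcal{X}_0 \cup \mathcal{X}_1$. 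The plan is therefore to prove the equality of the two ideals inside $\Hom(X,Y)$, namely
\[
[\mathscr{L}](X,Y) = [\mathcal{X}_0 \cup \mathcal{X}_1](X,Y).
\]

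\textbf{The easy inclusion.} First I would check $[\mathscr{L}](X,Y) \subseteq [\mathcal{X}_0 \cup \mathcal{X}_1](X,Y)$. For this it suffices to verify that every line bundle $L(\vx)$ lies in $\mathcal{X}_0 \cup \mathcal{X}_1$. By Lemma \ref{c1 does not contain line bundles} no line bundle belongs to $\crc_1$, and by the symmetric statement for $\crc_2$ no line bundle belongs to $\crc_2$; since $\ind(\crc) = \ind(\crc_1) \sqcup \ind(\crc_2)$ by Lemma \ref{partition lemma}, no line bundle lies in $\crc$ at all. But $\crc = \add\{X \in \ind(\coh\X) \mid \Hom(T,X)\neq 0 \neq \Ext^1(T,X)\}$ by (\ref{form of missing part}), so every indecomposable \emph{not} in $\crc$ satisfies $\Hom(T,X)=0$ or $\Ext^1(T,X)=0$, i.e. lies in $\mathcal{X}_0 \cup \mathcal{X}_1$. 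Hence every line bundle lies in $\mathcal{X}_0 \cup \mathcal{X}_1$, and any morphism factoring through a direct sum of line bundles factors through a sum of sheaves from $\mathcal{X}_0 \cup \mathcal{X}_1$.

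\textbf{The hard inclusion.} The substantive direction is $[\mathcal{X}_0 \cup \mathcal{X}_1](X,Y) \subseteq [\mathscr{L}](X,Y)$: a morphism $X \to Y$ factoring through some $W \in \mathcal{X}_0 \cup \mathcal{X}_1$ must in fact factor through line bundles. I would reduce to the case where $W$ is indecomposable. The key is to analyze how a composite $X \xrightarrow{f} W \xrightarrow{g} Y$ behaves, splitting into cases on whether $W \in \mathcal{X}_0$ or $W \in \mathcal{X}_1$ and on the rank of $W$. If $W$ is a line bundle we are done immediately. If $W$ is a finite length sheaf (in $\coh_0\X$), then since $X$ is a vector bundle the composite factors through the image of $f$, which is a finite length quotient of $X$; here I would use that such a quotient is a cokernel of an inclusion of vector bundles and argue the map still factors through line bundles via the line-bundle filtration of $X$. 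The genuinely delicate case is $W$ an indecomposable vector bundle of rank two lying in $\mathcal{X}_0 \cup \mathcal{X}_1$. Here I expect to use Lemma \ref{crucial lemma for missing part} together with Corollary \ref{missing part subseteq domain}: since $X, Y \in \crc_1 \subseteq \Dom(E_k)$ for the rank-two summands $E_k$, and $W \notin \crc$, one shows any morphism $X \to W$ (or $W \to Y$) that is nonzero in $\ul{\vect}\X$ would force $W$ into a domain relation incompatible with $W \notin \crc$, so that the composite must vanish in $\ul{\vect}\X$, i.e. factor through line bundles.

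\textbf{Main obstacle.} The main difficulty will be the rank-two case of the hard inclusion: ruling out that a nonzero stable morphism $X \to Y$ genuinely factors through a rank-two $W \notin \crc$. The route I favor is to show that if $\ul{\Hom}(X,W) \neq 0$ and $\ul{\Hom}(W,Y)\neq 0$ with $X,Y \in \crc_1$, then Lemma \ref{factor property} forces $W \in \crc_1 \subseteq \crc$, contradicting $W \in \mathcal{X}_0 \cup \mathcal{X}_1$. Thus any factorization through $W$ must already be zero in $\ul{\vect}\X$, meaning it factors through line bundles. Once both ideals are shown equal, the displayed identity $\crc_1(X,Y) = \Hom(X,Y)/[\mathscr{L}](X,Y)$ follows at once, and this simultaneously confirms that $\crc_1$ is a full subcategory of $\ul{\vect}\X$.
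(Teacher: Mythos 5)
Your proposal is correct and follows essentially the same route as the paper's own proof: the paper likewise reduces to computing the kernel of the canonical surjection $\pi:\Hom(X,Y)\twoheadrightarrow\crc_1(X,Y)$, obtains $[\mathscr{L}](X,Y)\subseteq\Ker\pi$ from the fact that line bundles lie outside $\crc$ (Lemma \ref{c1 does not contain line bundles}), and proves the reverse inclusion exactly as you propose, by applying Lemma \ref{factor property} to a hypothetical rank-two summand $Z_k$ with $\ul{\Hom}(X,Z_k)\neq 0\neq\ul{\Hom}(Z_k,Y)$ to force the contradiction $Z_k\in\crc_1$. The only difference is cosmetic: the paper silently discards finite-length summands (any morphism from $\coh_0\X$ to a vector bundle is zero, so those components contribute nothing), whereas your sketch takes an unnecessary, though harmless, detour through the line-bundle filtration of $X$ for that case.
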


\begin{proof}
Obviously, we have a surjection $\pi: \Hom(X,
Y)\twoheadrightarrow\crc_1(X, Y)$. By Lemma \ref{c1 does not contain
line bundles}, we have $[\mathscr{L}](X, Y)\subseteq \Ker\pi$. On
the other hand, for any $f\in\Hom(X,Y)$ with $\pi(f)=0$, we know
that $f$ factors through some object $Z\in\mathcal {X}_{0} \cup
\mathcal {X}_{1}$. We claim that $f$ can factor through a direct sum
of line bundles. Otherwise, there exists an indecomposable summand
$Z_k$ of $Z$ with $\rk Z_k=2$ satisfying
$$\ul{\Hom}(X, Z_k)\neq 0\text{\quad and\quad}\ul{\Hom}(Z_k, Y)\neq 0.$$
Then by Lemma \ref{factor property}, $Z_k\in\crc_1$, a
contradiction. It follows that $f\in[\mathscr{L}](X, Y)$. Hence
$\Ker\pi=[\mathscr{L}](X, Y)$, this finishes the proof.
\end{proof}

\begin{lemma}\label{abelian of c1} The category $\crc_1$ is an
abelian category.
\end{lemma}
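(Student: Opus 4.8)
The plan is to realise $\crc_1$ as an abelian category by identifying it, up to equivalence, with the module category $\mod(k\overrightarrow{A}_{i-2})$ of the linearly oriented quiver of type $A_{i-2}$. This mirrors the identification $\crc_{\can}\cong\mod(k\overrightarrow{A}_{n-1})$ obtained in \cite{CLR}, with the role of the full width $\vc=n\vx_3$ now played by the shorter width $(i-1)\vx_3$ occurring in the description (\ref{formular of up missing part }); once such an equivalence is in hand, abelianness of $\crc_1$ is automatic, since module categories are abelian.

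First I would pin down the indecomposable objects. By Lemma \ref{form of up missing part} together with Lemma \ref{c1 does not contain line bundles}, every indecomposable of $\crc_1$ is a rank two bundle lying in one of the orbits $\cl_2,\dots,\cl_{i-1}$ and satisfying $\Hom(L,X)\neq0\neq\Ext^1(L((i-1)\vx_3),X)$. Placing these in the standard component $\Gamma(\vect\X)=\mathbb{Z}\widetilde{\Delta}$, I would check that the two non-vanishing conditions cut out precisely a triangular ``wing'' with $\binom{i-1}{2}$ vertices, which is exactly the Auslander--Reiten quiver of $\mod(k\overrightarrow{A}_{i-2})$. This gives the bijection on objects.

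Next I would compute the morphisms. By Proposition \ref{homomorphism in the factor category} we have $\crc_1(X,Y)=\Hom(X,Y)/[\mathscr{L}](X,Y)$, so I may work entirely inside $\ul{\vect}\X$. Using that $\ind(\vect\X)$ is the mesh category of $\Gamma(\vect\X)$, every morphism is a sum of composites of irreducible maps modulo mesh relations; factoring out the ideal $[\mathscr{L}]$ of maps through line bundles kills exactly the boundary contributions, and I would verify that what survives on the wing matches the at most one-dimensional morphism spaces and the composition rule of $\mod(k\overrightarrow{A}_{i-2})$. Here the factor property Lemma \ref{factor property} is the essential tool: it guarantees that the relevant intermediate rank two bundles stay inside $\crc_1$, so that no surviving morphism is spuriously lost or created. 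Assembling these data produces a functor $\crc_1\to\mod(k\overrightarrow{A}_{i-2})$ that is bijective on objects and on Hom-spaces, hence an equivalence.

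I expect the main obstacle to lie in the morphism step: showing that passing to $\ul{\vect}\X$ collapses the mesh relations of $\mathbb{Z}\widetilde{\Delta}$ on the chosen wing into exactly the commutativity relations of the $A_{i-2}$ mesh category, i.e.\ that a composite of irreducible maps factors through a line bundle precisely when it ought to vanish in $\mod(k\overrightarrow{A}_{i-2})$. As an independent check one could instead verify the abelian axioms directly: $\crc_1$ is additive by construction, its indecomposables all have rank two, and for a nonzero $f\colon X\to Y$ in $\crc_1$ one sees that $f$ is monic in $\coh\X$ with finite length cokernel (its image has rank two, else $f$ would factor through a line bundle), so kernels and cokernels can be computed in $\coh\X$ and then pushed back into $\crc_1$ using the $\Hom$/$\Ext$ conditions of Lemma \ref{form of up missing part}. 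The equivalence route, however, is cleaner and makes the abelian structure of $\crc_1$ transparent.
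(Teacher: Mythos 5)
You take a genuinely different route from the paper, but as written the proposal has a real gap: the decisive steps are deferred rather than carried out. Your plan is to identify $\crc_1$ with $\mod(k\overrightarrow{A}_{i-2})$ by showing the conditions of Lemma \ref{form of up missing part} cut out a wing with $\binom{i-1}{2}$ vertices and then matching morphism spaces and compositions in $\ul{\vect}\X$ with the $A_{i-2}$ mesh category. The target is almost certainly correct (and consistent with, though stronger than, what the paper proves), but both steps are stated as ``I would check/verify'', and you yourself flag the morphism step as the main obstacle without resolving it. That step is not a routine verification: controlling which composites of irreducible maps in $\mathbb{Z}\widetilde{\Delta}$ die modulo the ideal $[\mathscr{L}]$ is exactly the computation that constitutes \cite{CLR} (there done for the full wing of $\crc_{L}$), i.e.\ it is the entire content of the lemma, so deferring it leaves nothing proved. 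The paper avoids this computation altogether: using the injection $L((i-1)\vx_3)\hookrightarrow L(\vc)$ it shows every indecomposable of $\crc_1$ lies in $\crc_{L}$, upgrades this to a full embedding by the argument of Proposition \ref{homomorphism in the factor category}, and then proves that $\crc_1$ is closed in $\crc_{L}$ under extensions, kernels of surjections and cokernels of injections (two case analyses using Serre duality, Lemma \ref{factor property} and Corollary \ref{general homomorphisms related to rank two bundle}); abelianness is then inherited from $\crc_{L}$, which is abelian by \cite{CLR}. Thus the paper only ever needs closure properties, never the finer identification you aim at.

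Your fallback argument (``verify the abelian axioms directly'') moreover rests on a false premise. It is true, as in Lemma \ref{factor property}, that a nonzero morphism of $\crc_1$ is represented by a monomorphism $f\colon X\to Y$ in $\coh\X$ whose cokernel has finite length; but kernels and cokernels cannot then be ``computed in $\coh\X$ and pushed back into $\crc_1$''. A finite length sheaf $S$ satisfies $\Ext^1(T,S)=0$, hence lies in $\mathcal{X}_0$ and is zero in $\crc$; so the $\coh\X$-cokernel of every such $f$ vanishes in $\crc_1$, and likewise its $\coh\X$-kernel is zero. If these gave the kernel and cokernel in $\crc_1$, every nonzero morphism between indecomposables of $\crc_1$ would be both monic and epic, hence invertible, which is false as soon as $\crc_1$ contains an irreducible non-isomorphism (i.e.\ whenever $i\geq 4$). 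The abelian structure on the missing part is that of the factor category and is not induced from the exact structure of $\coh\X$ in this naive way; this is precisely why the paper's proof works inside $\crc_{L}$, where kernels and cokernels are taken in the sense of $\crc_{L}\simeq\mod(k\overrightarrow{A}_{n-1})$, rather than with kernels and images in $\coh\X$.
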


\begin{proof}For any indecomposable object $X\in\crc_1$, applying $\Hom(-, X)$
to the injective map $L((i-1)\vx_3)\hookrightarrow L(\vc)$, we get
$\Ext^1(L(\vc), X)\neq 0$. Hence $X\in\crc_{L}$. That is, $\crc_1$
is a subcategory of $\crc_{L}$. Moreover, for any vector bundles $X,
Y\in \crc_{1}$, by the same proof of Proposition \ref{homomorphism in the factor
category}, we know that $$\crc_L(X, Y)=\Hom(X, Y)/[\mathscr{L}](X,
Y)=\crc_{1}(X, Y).$$ Hence $\crc_1$ is a full subcategory of
$\crc_{L}$.

We are going to show that $\crc_1$ is closed under extension, kernel
of surjective and cokernel of injective in $\crc_{L}$. Then $\crc_1$
inherits the abelianness of $\crc_{L}$.

Firstly, we show that $\crc_1$ is closed under extension in
$\crc_L$. Let
$$0\to X\to \oplus_{k}Z_k \to Y\to 0$$ be a non-split exact sequence
in $\crc_{L}$ with $X, Y$ from $\crc_1$. Without loss generality, we
assume $X, Y$ are indecomposable. We need to show each
$Z_k\in\crc_1$. It suffices to show that
\begin{equation}\label{claim for extension closed}
\Ext^1(L((i-1)\vx_3), Z_k)\neq 0.
\end{equation}
There are two cases to consider:
\begin{itemize}
\item[\emph{Case} 1:]$\crc_{L}(Z_k, Y)\neq 0$, then $\ul{\Hom}(Z_k, Y)\neq 0$ follows. By the proof of
Lemma \ref{factor property}, each non-zero morphism $\phi: Z_k\to Y$
is injective in $\coh\X$. Assume $\phi$ fits into the following
exact sequence in $\coh\X$:
\begin{equation}\label{exact sequence for fitting of phi}
0\to Z_k\xrightarrow{\phi} Y\to S\to 0,
\end{equation}where $S$ is a coherent sheaf of finite length since both $Z_k$ and $Y$ are of rank two.
Notice that $Y\in\crc_1$, it follows that
\begin{equation}\label{non-zero extension of y}
\Ext^1(L((i-1)\vx_3), Y)\neq 0.
\end{equation}Now by applying
$\Hom(L((i-1)\vx_3), -)$ to (\ref{exact sequence for fitting of
phi}), we get an exact sequence $\Ext^1(L((i-1)\vx_3),
Z_k)\to\Ext^1(L((i-1)\vx_3), Y)\to\Ext^1(L((i-1)\vx_3), S).$ Note
that $\Ext^1(L((i-1)\vx_3), S)=0$. Then (\ref{claim for extension
closed}) follows from (\ref{non-zero extension of y}).

\item[\emph{Case} 2:]$\crc_{L}(Z_k, Y)= 0$, then $X\to Z_k$ is
surjective in $\crc_{L}$. Notice from \cite{CLR} that $\crc_{L}$ is
equivalent to the module category of $k\overrightarrow{A_{n-1}}$.
Hence from the structure of $\crc_L$, we know that in
$\Gamma(\vect\X)$,
\begin{equation}\label{property of zk}
Z_k\in\cs(X\to) \text{\quad and\quad} l(Z_k)\leq l(X).
\end{equation}
Since $X\in\crc_1$, it follows that $\Ext^1(L((i-1)\vx_3), X)\neq
0$. Then by Serre duality,
\begin{equation}\label{non-zero homomorphism of x}
\Hom(X, \tau L((i-1)\vx_3))\neq 0.
\end{equation}
Hence there exists some $m\geq 1$, such that $\tau^{m}
L((i-1)\vx_3)\in\cs(X\to)$. Combining with (\ref{property of zk}),
and noticing that $\tau^{m} L((i-1)\vx_3)$ is of length one, we get
$\tau^{m} L((i-1)\vx_3)\in\cs(Z_k\to)$. Hence $\Hom(Z_k, \tau^{m}
L((i-1)\vx_3))\neq 0.$ According to Corollary \ref{general
homomorphisms related to rank two bundle},
$$\Hom(Z_k, \tau L((i-1)\vx_3))\neq 0.$$ By Serre duality, (\ref{claim
for extension closed}) holds, as claimed.
\end{itemize}

Secondly, assume $\phi: X\to Y$ is surjective in $\crc_L$ with $X,
Y\in\crc_1$. If $\phi$ is isomorphism, then $\Ker\phi=0\in\crc_1$.
Otherwise, for any indecomposable direct summand $Z_k$ of
$\Ker\phi$, there exists an non-zero morphism from $Z_k$ to $X$ in
$\crc_L$. By the same proof of $\emph{Case}\ 1$ above, $X\in\crc_1$
implies that $Z_k\in\crc_1$.

Thirdly, assume $\phi: X\to Y$ is injective in $\crc_L$ with $X,
Y\in\crc_1$. If $\phi$ is isomorphism, then $\Coker\phi=0\in\crc_1$.
Otherwise, for any indecomposable direct summand $Z_k$ of
$\Coker\phi$, $Y\to Z_k$ is surjective. 
By similar arguments of $\emph{Case}\ 2$ above, $Y\in\crc_1$ implies
that $Z_k\in\crc_1$. This finishes the proof.
\end{proof}

Similarly, we have
\begin{lemma}\label{abelian of c2} Assume $\cs(E_j\to)\cap\cl_{n+1}=L'$, then
$$\crc_2=\add\{X\in\ind(\coh\X)|\Hom(L'(-(n-j+1)\vx_3), X)\neq 0\neq
\Ext^1(L', X)\}.$$ Moreover, $\crc_2$ is an abelian category.
\end{lemma}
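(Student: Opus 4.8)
The plan is to mirror verbatim the development carried out for $\crc_1$ in Lemmas \ref{form of up missing part}--\ref{abelian of c1}, exploiting the symmetry of $\Gamma(\vect\X)=\mathbb{Z}\widetilde{\Delta}$ that interchanges the two forks of $\widetilde{\Delta}$, hence the $\tau$-orbits $\cl_1,\cl'_1$ with $\cl_{n+1},\cl'_{n+1}$, and reverses the direction of morphisms. Under this dictionary the role of the rank-two summand $E_i$ of minimal length is played by $E_j$ of maximal length, the role of $E_i^u$ by $E_j^d$, and $\Dom^+$, $\Hom$, injections are interchanged with $\Dom^-$, $\Ext^1$, Serre-dual surjections throughout. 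The extreme summands of $E_j^d=\bigoplus_{0\leq k\leq n-j+1}L'(-k\vx_3)$ are $L'$ (maximal slope) and $L'(-(n-j+1)\vx_3)$ (minimal slope), playing the roles of $L((i-1)\vx_3)$ and $L$ respectively; this is exactly what produces the asserted formula, with $\Hom$ tested on the low-slope end and $\Ext^1$ on the high-slope end. Each auxiliary statement invoked for $\crc_1$ already has its dual recorded, e.g. Lemma \ref{bound of order I2} dual to \ref{bound of order I}, Lemma \ref{generate from E(i) to E(i+1)2} dual to \ref{generate from E(i) to E(i+1)}, and the two halves of Corollary \ref{general homomorphisms related to rank two bundle}.

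First I would establish the displayed description of $\crc_2$, dualizing the proof of Lemma \ref{form of up missing part}. For the inclusion $\supseteq$: if $\Hom(L'(-(n-j+1)\vx_3),X)\neq 0\neq\Ext^1(L',X)$ then $X\in\crc$, since both $L'$ and $L'(-(n-j+1)\vx_3)$ are summands of $T$; and by Lemma \ref{extension-free of domains} the non-vanishing of $\Ext^1(L',X)$ forces $X\notin\crc_1$, so $X\in\crc_2$ by Lemma \ref{partition lemma}. For $\subseteq$: given $X\in\crc_2$ we have $X\in\Dom^-(E_k)$ for $i\leq k\leq j$, whence $\Ext^1(X,E_k)=0=\Ext^1(E_i^u,X)$ by Lemmas \ref{iff condition of extension free} and \ref{extension-free of domains}; thus $\Ext^1(T,X)\neq0$ forces $\Ext^1(L'(-m\vx_3),X)\neq0$ for some $0\leq m\leq n-j+1$, and the injection $L'(-m\vx_3)\hookrightarrow L'$ with finite-length cokernel propagates this to $\Ext^1(L',X)\neq0$. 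Dually, the claim $\Hom(X,E_j)=0$ gives $\tau X\in\Dom^-(E_j)$, kills $\Hom(E_i^u,X)$ and $\Hom(E_k,X)$ via Serre duality, and then $\Hom(T,X)\neq0$ together with the injection $L'(-(n-j+1)\vx_3)\hookrightarrow L'(-m\vx_3)$ yields $\Hom(L'(-(n-j+1)\vx_3),X)\neq0$.

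With the formula in hand, the remaining structural facts transfer directly: the duals of Lemma \ref{c1 does not contain line bundles}, Lemma \ref{factor property} and Proposition \ref{homomorphism in the factor category} show that $\crc_2$ contains no line bundle, sits inside $\ul{\vect}\X$, and satisfies $\crc_2(X,Y)=\Hom(X,Y)/[\mathscr{L}](X,Y)$. For abelianness I would dualize Lemma \ref{abelian of c1} by choosing the minimal-slope end $L''=L'(-(n-j+1)\vx_3)$ of $E_j^d$ and showing $\crc_2$ is a full subcategory of the abelian category $\crc_{L''}$; here $\Hom(L'',X)\neq0$ holds by definition, while the injection $L'\hookrightarrow L''(\vc)=L'((j-1)\vx_3)$ converts $\Ext^1(L',X)\neq0$ into $\Ext^1(L''(\vc),X)\neq0$. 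One then checks, exactly as before, that $\crc_2$ is closed in $\crc_{L''}$ under extensions, kernels of epimorphisms and cokernels of monomorphisms, so that it inherits the abelian structure.

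I expect the main obstacle to be the faithful dualization of the abelianness argument, and in particular of \emph{Case} 2 in the proof of Lemma \ref{abelian of c1}, which relied on the explicit identification of $\crc_L$ with $\mod(k\overrightarrow{A}_{n-1})$ and on a slice-theoretic analysis in $\Gamma(\vect\X)$. One must verify that the reflected configuration around $E_j$ and the dualizing line bundle $L''$ reproduce the same slice combinatorics and the same Serre-duality computations, so that the closure under kernels and cokernels genuinely goes through rather than merely being asserted by symmetry.
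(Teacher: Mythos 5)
Your overall strategy --- dualizing the chain of Lemmas \ref{form of up missing part}--\ref{abelian of c1} --- is exactly what the paper intends (its entire proof of this lemma is the word ``Similarly''), and most of your translations are correct: the inclusion $\supseteq$, the propagation $\Ext^1(L'(-m\vx_3),X)\neq 0\Rightarrow\Ext^1(L',X)\neq 0$, and the embedding of $\crc_2$ into $\crc_{L''}$ via $L'\hookrightarrow L''(\vc)$ all dualize faithfully. But one step of your dualization is genuinely wrong: in the inclusion $\subseteq$ you replace the paper's auxiliary claim $\Hom(E_i,X)=0$ by the arrow-reversed claim $\Hom(X,E_j)=0$, deduce $\tau X\in\Dom^-(E_j)$, and assert that this ``kills $\Hom(E_i^u,X)$ and $\Hom(E_k,X)$ via Serre duality.'' It does not. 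Serre duality has a fixed direction: extension-freeness of $\tau X\oplus E_k$ gives $\Ext^1(E_k,\tau X)=D\Hom(\tau X,\tau E_k)=D\Hom(X,E_k)=0$ and $\Ext^1(\tau X,E_k)=D\Hom(E_k,\tau^2 X)=0$, i.e.\ it kills $\Hom(X,E_k)$ and $\Hom(E_k,\tau^2X)$, not the group $\Hom(E_k,X)$ whose vanishing you need in order to force $\Hom(T,X)\neq 0$ to be detected by $E_j^d$. Worse, your claim is simply false: for $n=3$, $i=j=2$, the bundle $X=\tau E_3$ lies in $\crc_2$ (indeed $\Hom(\tau^2L',\tau E_3)\neq 0$ since $\tau^2L'=L'(-2\vx_3)$ is a summand of $T$ with an irreducible map to $\tau E_3$, and $\Ext^1(L',\tau E_3)=D\Hom(E_3,L')\neq 0$), yet $\Hom(\tau E_3,E_2)\neq 0$ because there is an irreducible map $\tau E_3\to E_2$.

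The correct dualization keeps the paper's orientation instead of reversing it: claim $\Hom(E_j,X)=0$. Otherwise $X\in\Dom(E_j)$ together with Corollary \ref{general homomorphisms related to rank two bundle} forces $X\in\cs(E_j\to)$; since $L'\in\cs(E_j\to)$, the two lie on a common slice, hence $X\oplus L'$ is extension-free, contradicting the non-vanishing $\Ext^1(L',X)\neq 0$ you established in the first step. From $\Hom(E_j,X)=0$ one gets $\tau^{-1}X\in\Dom^-(E_j)\subseteq\Dom^-(E_k)$ for all $i\leq k\leq j$, and then Serre duality in the correct direction yields $\Hom(E_k,X)=D\Ext^1(\tau^{-1}X,E_k)=0$ and $\Hom(E_i^u,X)=D\Ext^1(\tau^{-1}X,E_i^u)=0$ (the latter via Lemma \ref{extension-free of domains}), so $\Hom(T,X)\neq 0$ indeed forces $\Hom(L'(-m\vx_3),X)\neq 0$ for some $m$ and hence $\Hom(L'(-(n-j+1)\vx_3),X)\neq 0$. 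Note that the same direction-sensitivity already shows, harmlessly, in your writing ``$\Ext^1(X,E_k)=0$'' where ``$\Ext^1(E_k,X)=0$'' is what gets used; there extension-freeness supplies both, but in the Hom-killing step the two directions are not interchangeable. With this repair the rest of your plan, including the abelianness argument inside $\crc_{L''}$, goes through as in the paper.
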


Now we give our main result in this section.
\begin{theorem}\label{main theorem}
Let $T$ be a tilting bundle of the form (\ref{form of T}) and $\crc$
be the corresponding ``missing part". Assume $\cs(\to
E_i)\cap\cl_{1}=L$ and $\cs(E_j\to)\cap\cl_{n+1}=L'$. Then $\crc$
can be decomposed to a product of two abelian categories
$$\crc=\crc_1\coprod\crc_2,$$
where
$$\crc_1=\add\{X\in\ind(\coh\X)|\Hom(L, X)\neq 0\neq
\Ext^1(L((i-1)\vx_3), X)\},$$ and
$$\crc_2=\add\{X\in\ind(\coh\X)|\Hom(L'(-(n-j+1)\vx_3), X)\neq 0\neq
\Ext^1(L', X)\}.$$
\end{theorem}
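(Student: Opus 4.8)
The goal is to assemble the final Theorem \ref{main theorem} from the machinery already developed, so my plan is essentially to recognize that almost everything has been proved and the theorem is a clean packaging of earlier lemmas. The key realization is that Lemma \ref{partition lemma} already gives the decomposition $\crc=\crc_1\coprod\crc_2$ as a coproduct of additive categories, with $\ind(\crc_1)=\ind(\crc)\cap\Dom^+(E_i)$ and $\ind(\crc_2)=\ind(\crc)\cap\Dom^-(E_j)$. So the structural statement ``$\crc$ decomposes'' is in hand; what remains is to upgrade ``additive coproduct'' to ``product of two \emph{abelian} categories'' and to supply the explicit $\Hom$-$\Ext^1$ descriptions of the two pieces.

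\emph{First}, I would invoke Lemma \ref{form of up missing part} to replace the intrinsic description $\ind(\crc_1)=\ind(\crc)\cap\Dom^+(E_i)$ by the explicit one
\[
\crc_1=\add\{X\in\ind(\coh\X)\mid \Hom(L, X)\neq 0\neq \Ext^1(L((i-1)\vx_3), X)\},
\]
where $L=\cs(\to E_i)\cap\cl_1$; this is exactly the content of that lemma once we record the hypothesis $\cs(\to E_i)\cap\cl_1=L$. \emph{Second}, by the dual statement Lemma \ref{abelian of c2} (invoking $\cs(E_j\to)\cap\cl_{n+1}=L'$), I get the matching explicit description of $\crc_2$. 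These two lemmas also deliver the abelianness: Lemma \ref{abelian of c1} asserts $\crc_1$ is abelian and Lemma \ref{abelian of c2} asserts $\crc_2$ is abelian. So the two factors are abelian categories with the stated presentations.

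\emph{Third}, the only point needing a word of justification beyond citing the lemmas is why the coproduct $\crc_1\coprod\crc_2$ should be read as a genuine \emph{product} of categories, i.e.\ that there are no nonzero morphisms between the two pieces in $\crc$ so that the decomposition is a categorical product (equivalently $\coprod$ here is a direct product of additive/abelian categories). I would verify that $\crc_1(X,Y)=0=\crc_1(Y,X)$ whenever $X\in\crc_1$ and $Y\in\crc_2$: an indecomposable $X\in\crc_1$ lies in $\Dom^+(E_i)$ with $l(X)<i$, while $Y\in\crc_2$ lies in $\Dom^-(E_j)$ with $l(Y)>j$, and any morphism between bundles of strictly increasing length that survives in the stable category $\ul{\vect}\X$ would, via the slope/domain inequalities used in Lemma \ref{crucial lemma for missing part} and Lemma \ref{extension-free of domains}, force $E_k\oplus(\text{one of }X,Y)$ to fail extension-freeness, contradicting $X,Y\in\ind(\crc)\subseteq\Dom(E_k)$ from Corollary \ref{missing part subseteq domain}. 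This Hom-orthogonality is what turns the set-theoretic partition of indecomposables into a product decomposition of the category.

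\emph{The main obstacle} I anticipate is precisely this orthogonality step: one must check that a morphism $X\to Y$ with $X\in\crc_1$, $Y\in\crc_2$ necessarily factors through a line bundle (hence vanishes in $\crc_1$ and in the stable category), rather than merely that the two index sets of indecomposables are disjoint. Everything else is citation. Concretely I would argue that such a morphism, being a map from a bundle of length $<i$ to a bundle of length $>j$ lying in the common domain region, must factor through the sub-slice summands $\bigoplus_{i\le k\le j}E_k$ of $T$, and since those $E_k\in\mathcal{X}_0\cup\mathcal{X}_1$ are killed in $\crc$, the morphism is zero in $\crc$. With this in place the decomposition $\crc=\crc_1\coprod\crc_2$ is a product of abelian categories, completing the proof.
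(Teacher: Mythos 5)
Your proposal is correct and takes essentially the same approach as the paper, whose entire proof is to combine Lemma \ref{partition lemma}, Lemma \ref{form of up missing part}, Lemma \ref{abelian of c1} and Lemma \ref{abelian of c2}, exactly the four results you cite. Your extra verification that there are no nonzero morphisms between $\crc_1$ and $\crc_2$ (so that $\coprod$ is a genuine product) is sound added care --- noting only that such a morphism factors through $\tau$-shifts of the $E_k$ rather than the $E_k$ themselves, which are still killed in $\crc$ by the claim in Lemma \ref{partition lemma} --- but the paper treats this as already contained in the coproduct statement of that lemma, so it does not constitute a different route.
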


\begin{proof}Combine with Lemma \ref{partition lemma}, Lemma \ref{form of up missing part}, Lemma \ref{abelian of c1} and Lemma \ref{abelian of
c2} to finish the proof.
\end{proof}

For other choices of $T^+(E_i)\in \Br^+(E_i)$ and $T^-(E_j)\in
\Br^-(E_j)$, by similar arguments one can obtain the similar
decomposition of $\crc$ with some modifications of $L$ and $L'$.
Summarize up, we have
\begin{theorem}\label{main general theorem}
Let $T$ be a tilting bundle of the form (\ref{form of tilting
bundles}) and $\crc$ be the corresponding ``missing part". Then
$\crc$ can be decomposed to a product of two abelian categories.
\end{theorem}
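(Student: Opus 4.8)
The plan is to rerun, case by case over the choices $T^+(E_i)\in\Br^+(E_i)$ and $T^-(E_j)\in\Br^-(E_j)$, the whole chain of lemmas that produced Theorem \ref{main theorem}, checking that each ingredient survives and tracking how $L$ and $L'$ must be replaced. First I would observe that the partition $\crc=\crc_1\coprod\crc_2$ of Lemma \ref{partition lemma}, together with the identifications $\ind(\crc_1)=\ind(\crc)\cap\Dom^+(E_i)$ and $\ind(\crc_2)=\ind(\crc)\cap\Dom^-(E_j)$, is insensitive to the choice of $T^+(E_i)$ and $T^-(E_j)$: its proof invokes only the extension-freeness of $T$ (yielding $\Ext^1(T,E_k)=0$) and Corollary \ref{missing part subseteq domain}, neither of which refers to the form of the extreme summands. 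Hence this step is already general, and it reduces the theorem to establishing the abelianness of $\crc_1$ and $\crc_2$ separately.

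Second, I would reprove the explicit description of $\crc_1$ from Lemma \ref{form of up missing part} for each admissible $T^+(E_i)$. In the original argument the line bundles $L$ and $L((i-1)\vx_3)$ acted as the minimal- and maximal-slope summands of $E_i^u$, and the decisive reductions were factoring a nonzero map out of an intermediate summand $L(m\vx_3)$ through the inclusion $L\hookrightarrow L(m\vx_3)$, and factoring an extension through $L(m\vx_3)\hookrightarrow L((i-1)\vx_3)$. For a general $T^+(E_i)$ I would designate summands $L_{\mathrm b}$ and $L_{\mathrm t}$ of extreme slope, verify that every remaining summand lies between them in the line-bundle order, and conclude $\crc_1=\add\{X\mid\Hom(L_{\mathrm b},X)\neq 0\neq\Ext^1(L_{\mathrm t},X)\}$. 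The companion facts (Lemmas \ref{c1 does not contain line bundles} and \ref{factor property}, Proposition \ref{homomorphism in the factor category}) then transfer verbatim once $L$ and $L((i-1)\vx_3)$ are replaced by $L_{\mathrm b}$ and $L_{\mathrm t}$.

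Third, for abelianness I would follow Lemma \ref{abelian of c1}: exhibit $\crc_1$ as a full subcategory of a canonical missing part $\crc_{L_{\mathrm b}}$, checking $\Ext^1(L_{\mathrm b}(\vc),X)\neq 0$ by factoring $L_{\mathrm t}\hookrightarrow L_{\mathrm b}(\vc)$, and verify closure under extensions, kernels of surjections and cokernels of injections, so that $\crc_1$ inherits abelianness from $\crc_{L_{\mathrm b}}\cong\mod(k\overrightarrow{A}_{n-1})$; the dual treatment, using the symmetry between $\cl_1,\cl'_1$ and $\cl_{n+1},\cl'_{n+1}$, handles $\crc_2$ with $L'$ replaced by the corresponding extreme summand of $T^-(E_j)$. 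Assembling the three steps yields the product decomposition into abelian categories for every tilting bundle of the form (\ref{form of tilting bundles}).

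The main obstacle will be the degenerate branch cases $l(E_i)=2$ and $l(E_j)=n$, where $\Br^+(E_i)$ and $\Br^-(E_j)$ acquire the extra bracket members $E_i^l,E_i^l(\vx_3)$ and $E_j^r,E_j^r(-\vx_3)$. Here the two extreme summands no longer differ by a pure multiple of $\vx_3$; for instance $E_i^l=L\oplus L(\vx_1-\vx_2)$ has both summands of equal slope, lying in the distinct orbits $\cl_1$ and $\cl'_1$. In such configurations the naive notion of ``minimal/maximal slope summand'' must be replaced by the correct pair governing the Hom- and Ext-conditions, and one must recheck that the factorization arguments and the embedding into a canonical missing part remain valid. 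Once these special branch patterns are dealt with, all other choices reduce to the pure $\vx_3$-shift situation already settled in Theorem \ref{main theorem}, and the conclusion follows.
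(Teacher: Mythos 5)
Your proposal is correct and is essentially the paper's own argument: the paper establishes the representative case $T^+(E_i)=E_i^u$, $T^-(E_j)=E_j^d$ (Theorem \ref{main theorem}) through exactly the chain you outline---the choice-independent partition lemma, the explicit $\Hom$/$\Ext^1$ description of $\crc_1$ and $\crc_2$ via extreme summands, and the full embedding into a canonical missing part $\crc_L$---and then deduces Theorem \ref{main general theorem} precisely by rerunning those lemmas for the remaining members of $\Br^+(E_i)$ and $\Br^-(E_j)$ with $L$ and $L'$ suitably replaced. The obstacle you flag (the branches $E_i^l$, $E_i^l(\vx_3)$ when $l(E_i)=2$, dually $E_j^r$, $E_j^r(-\vx_3)$ when $l(E_j)=n$) in fact dissolves on inspection: in that situation the partition lemma forces $\crc_1$ to consist only of length-one objects, and a direct check of the four line bundles in $\Dom^+(E_i)$ (two of which are summands of $T$) shows each has $\Hom(T,-)=0$ or $\Ext^1(T,-)=0$, so $\crc_1=0$ (dually $\crc_2=0$) and its abelianness is automatic.
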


\bibliographystyle{amsplain}

\end{document}